\def\ps@pprintTitle{%
	\let\@oddhead\@empty
	\let\@evenhead\@empty
	\def\@oddfoot{}%
	\let\@evenfoot\@oddfoot}
\pgfplotsset{compat=1.18}
\definecolor{lightblue}{rgb}{0.32,0.45,0.90}
\definecolor{lightgreen}{rgb}{0.42,0.7,0.40}
\numberwithin{equation}{section}
\numberwithin{figure}{section}
\numberwithin{figure}{section}
\def\b{\boldsymbol}
\def\b{\boldsymbol}
\colorlet{cgray}{gray!20!white}
\theoremstyle{definition}
\newtheorem{definition}{Definition}[section]
\newtheorem{method}{Method}[section]
\newtheorem{theorem}{Theorem}[section]
\theoremstyle{remark}
\newtheorem*{remark}{Remark}
\begin{document}
\begin{frontmatter}
\title{Easy-to-Implement One-Step Schemes for Stochastic Integration}
\author[inst4]{J. Woodfield}
\author[inst4]{A. Lobbe}

\affiliation[inst4]{organization={Department of Mathematics, Imperial College London},
addressline={South Kensington Campus}, 
city={London},
postcode={SW7 2AZ}, 
country={United Kingdom}}

\begin{abstract}
Convenient, easy to implement stochastic integration methods are developed on the basis of abstract one-step deterministic order $p$ integration techniques. The abstraction as an arbitrary one step map allows the inspection of easy to implement stochastic exponential time differencing Runge-Kutta (SETDRK), stochastic integrating factor Runge-Kutta (SIFRK) and stochastic RK (SRK) schemes. Such schemes require minimal modifications to existing deterministic schemes and converging to the Stratonovich SDE, whilst inheriting many of their desirable properties. 

These schemes capture all symmetric terms in the Stratonovich-Taylor expansion, are order $p$ in the limit of vanishing noise, can attain at least strong order $p/2$ or $p/2-1/2$ (parity dependent) for drift commutative noise, strong order $1$ for commutative noise, and strong order $1/2$ for multidimensional non-commutative noise. Numerical convergence is demonstrated using different bases of noise for 2nd, 3rd and 4th order SETDRK, SIFRK and SRK schemes for a stochastic KdV equation. 


\end{abstract}
\begin{keyword}
Stochastic; Integrating Factor; Exponential time differencing; Runge-Kutta
\end{keyword}
\end{frontmatter}
\tableofcontents


\section{Introduction}
\subsection{Motivation}
Stochastic differential equations (SDEs) and stochastic partial differential equations (SPDEs) are central to ensemble forecasting, uncertainty quantification, and data assimilation. Developing dedicated solvers for SDEs and SPDEs is often impractical in large-scale applications. This work studies convergence properties of deterministic one-step integrators, which converge to an SDE under an easily implementable change of variables. Abstracting to arbitrary one-step schemes, allows convergence properties for Runge-Kutta (RK), Integrating Factor Runge-Kutta (IFRK) and Exponential Time Differencing Runge-Kutta (ETDRK) methods to be established in the stochastic context. 

Whilst Integrating Factor Runge-Kutta (IFRK) methods and Runge-Kutta (RK) methods can be naturally derived for Stratonovich SDEs analogously to the deterministic case, stochastic exponential time differencing Runge-Kutta (SETDRK) schemes do not derive analogously. Here, abstract one-step map convergence theory is particularly useful for constructing SETDRK methods from existing deterministic schemes.

\subsection{Deterministic Integration Schemes}


Runge–Kutta (RK) methods are widely used one-step integrators for ODEs, offering arbitrary order accuracy and, in certain formulations, desirable properties such as strong stability preservation or symplecticity. Integrating Factor Runge–Kutta (IFRK) methods extend RK schemes by removing stiff linear terms via an integrating factor allowing larger timesteps. Exponential Time Differencing Runge–Kutta (ETDRK) methods achieve the same goal using exponential propagators acting on the linear stiff term and have shown to be efficient for solving stiff coupled systems of ordinary differential equations (ODEs) arising in diverse contexts ranging from American options under stochastic volatility to shallow water models of the atmosphere \cite{sun2023family,fu2024higher,wang2025novel,yousuf2013efficient,bhatt2016compact,zhangcharacterizing,ahmat2021compact,niegemann2007higher,beylkin1998new,clancy2013use}. For background on deterministic ETDRK see Cox and Matthews (2002), Kassam and Trefethen (2005) and Hochbruck and Ostermann (2010) \cite{cox2002exponential,kassam2005fourth,hochbruck2010exponential}.

\subsection{Related work and literature }
Whilst our work focuses on minimal-change Stratonovich-targeted deterministic ETDRK, IFRK, RK schemes there exists a range of related and extended approaches in the SDE literature, which we briefly review below.

Rüemelin in 1982 proposed a class of explicit Stochastic Runge-Kutta (SRK) schemes dependent on random variables approximating the first Stratonovich integral $J_i$, and gave algebraic conditions on the Double-Butcher-Tableau sufficient for the SRK scheme to converge with strong order $1/2$ and commutative order $1$. It was later shown in \cite{burrage1998general}, that for arbitrary dimensional SDEs with a non-commuting basis of noise, the maximum strong order of a SRK method dependent on random variables representing only the first Stratonovich integral cannot exceed $1/2$, due to the missing Lévy area correction.

In 1996 Burrage and Burrage \cite{burrage1996high} proposed a more general class of SRK schemes by allowing dependence on random variables approximating nested integration of Brownian motion. In the one-dimensional case $(d=1)$, including approximations of higher order nested integration such as $J_{0i}$ further increased the possible order of such SRK methods to $1.5$ see \cite{burrage1996high,burrage1998general}. However, as shown by the same authors in \cite{burrage1998general}, in the multidimensional-noncommutative case the inclusion of higher order nested integration still remained maximum strong order $1/2$ due to the particular structure of the SRK integrator.





In 2019 Erdogan and Lord \cite{erdougan2019new} propose stochastic exponential time differencing methods that converge to an Itô system with strong order 1/2, commutative order 1. In \cref{sec:SETDRK methods for Ito systems} we give additional literature review relating to stochastic exponential time differencing schemes for Itô-SDE's which are of a different derivation to those in this paper, some use Itô's isometry for the drift term whilst others can be derived as integrating factor Runge-Kutta methods. In \cite{komori2017weak} linear A-stability is established for explicit exponential Runge-Kutta methods for weak solutions of Itô SDE's and have been successfully implemented for an Itô driven non-linear stochastic Schrödinger equation in \cite{anton2018exponential}. 

In \cite{becker2016exponential} an exponential Wagner-Platen scheme is proposed for Itô-SPDE's these schemes are fundamentally different to SRK, SIFRK, and SETDRK schemes in this paper, and rely on iterative substitutions similar in spirit to a Wagner-Platen expansion \cite{kloeden2011numerical}, rather than a RK approach. In \cite{von2024exponential}, an exponential stochastic Runge-Kutta scheme based on the exponential Wagner-Platen type scheme is proposed for SPDE's which, under multiple operator-valued commutator relationships, could achieve order 3/2. 


In \cite{nie2006efficient} deterministic ETD schemes are introduced which treat the nonlinear term semi-implicitly, and in \cite{ta2015integration}
some of these semi-implicit schemes are extended to the stochastic context with strong order 1/2, commutative order 1. Integrating factor Runge-Kutta methods (sometimes referred to as Runge-Kutta Lawson methods after the seminal work by Lawson \cite{lawson1967generalized}), were extended to the stochastic setting in \cite{debrabant2021runge} where a general class of stochastic Runge-Kutta-Lawson schemes are presented for SDE's with multiple linear commutative drift terms.

In \cite{bhatt2017structure,besse2017high,hochbruck2010exponential} deterministic IFRK and ETDRK schemes are considered as generalised deterministic ERK methods, and this is used as a basis for even more generalised stochastic integration in \cite{yang2022new}. In \cite{yang2022new}, a general class of s-stage stochastic exponential integrators are considered following the approach \cite{burrage1996high}, where preserving conformal quadratic invariants, the generalisation to multiple linear terms, and explicit B-series expansions are discussed.

This paper considers abstract one-step integrators including SETDRK, SIFRK, and SRK methods, but only those which are related to deterministic integrators under a change of variables and those dependent on random variables representing the first Stratonovich integral, for ease of implementation, and because of the theoretical order barriers in \cite{burrage1998general}.

\subsection{Contributions}

\Cref{thm:onestep}, presents the main theoretical result. 
For any deterministic one-step method of order $p$, we show that an easily implementable change of variables yields a scheme converging to a Stratonovich SDE with the following properties: strong order 1/2 (\cref{item1}); exact representation of all symmetric terms in the Stratonovich–Taylor expansion up to order $p/2$ (\cref{item2}); drift-commutative order $\operatorname{int}(p/2)$ for B-series methods (\cref{item3,item4}); commutative order $1$ (\cref{item6}); deterministic order $P$ \cref{item5}. 

Whilst results (\cref{item1,item6}) are known for broader SRK SIFRK SETDRK methods, \cite{ruemelin1982numerical,yang2022new} the proof differs. Convergence follows directly from deterministic Lie-series expansions rather than the algebraic structure and analysis of the numerical scheme. Consequently, the result applies to any deterministic one-step method under a change of variables. Other properties (\cref{item2,item3,item4}) are specific to the particular class of methods we consider. 

The main numerical contribution is numerical comparison of SRK, SIFRK, and SETDRK methods of different orders, with different basis of noise. The theoretical properties in \cref{thm:onestep} are all observed in practice. We investigate a convenient class of stochastic exponential time differencing methods (based on the deterministic integration schemes in \cite{cox2002exponential,kassam2005fourth}) for instance SETDRK$3$, SETDRK$4$, and show they permit larger timesteps than standard SRK or SIFRK approaches, while preserving higher-order deterministic accuracy in the small-noise regime. We translate some successful IFRK schemes into the stochastic context, for instance the eSSPIFRK$^{+}(3,3)$ from \cite{isherwood2018strong}.

\subsection{Paper Organization}

The paper is organised as follows. 
\Cref{sec:setup} defines the stochastically modified one-step map and the associated SDE, \cref{sec:convergence} provides the convergence framework. \Cref{sec:SRK methods,sec:SIFRK,sec:SETDRK} introduces stochastic Runge–Kutta (SRK) methods, stochastic integrating factor Runge–Kutta (SIFRK) methods, and stochastic exponential time differencing Runge–Kutta (SETDRK) methods. \Cref{sec:Numerical implementation details} then outlines numerical implementation details for ETDRK methods.

\Cref{sec:Numerical_results} presents numerical results. \Cref{sec:ex1} investigates multidimensional non-commutative noise, in terms of convergence and efficiency. \Cref{sec:ex2} investigates commutative noise. \Cref{sec:ex3,sec:ex3a} investigates drift commutative noise. \Cref{sec:ex4} explores decreasing noise magnitude and errors at large timesteps, whilst \cref{sec:ex5} applies ETDRK4 to the two-dimensional incompressible Navier–Stokes equation.

The paper concludes in \cref{sec:con}. The appendices provide supporting material: \cref{sec:sym_J} derives the symmetric part of nested Stratonovich integrals, \cref{sec: stochastic travelling wave solutions to the KdV equation} derives stochastic travelling wave solutions to the KdV equation, \cref{eq:commutator expansion} records commutator relationships, and \cref{sec:SETDRK methods for Ito systems} gives further background and literature on SETDRK methods for Itô systems. 

\begin{figure}[H]
\centering
\begin{subfigure}{.245\textwidth}
    \centering
\includegraphics[trim={70pt 10pt 40pt 45pt}, clip, width=.95\linewidth]{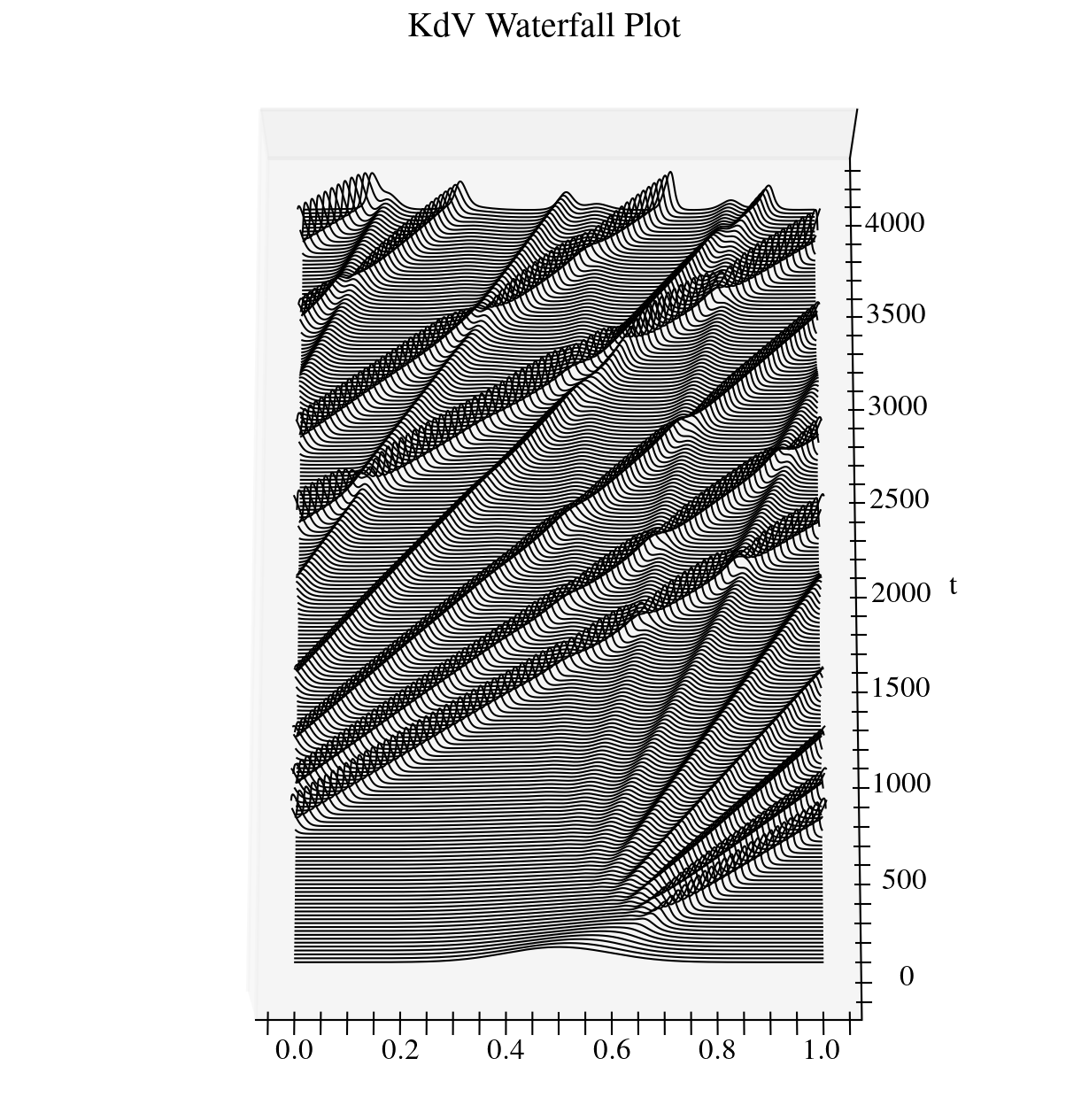}
\caption{KdV}
\label{fig:KdV_Waterfall}
\end{subfigure}
\begin{subfigure}{.245\textwidth}
\centering
\includegraphics[trim={70pt 10pt 40pt 45pt}, clip,width=.95\linewidth]{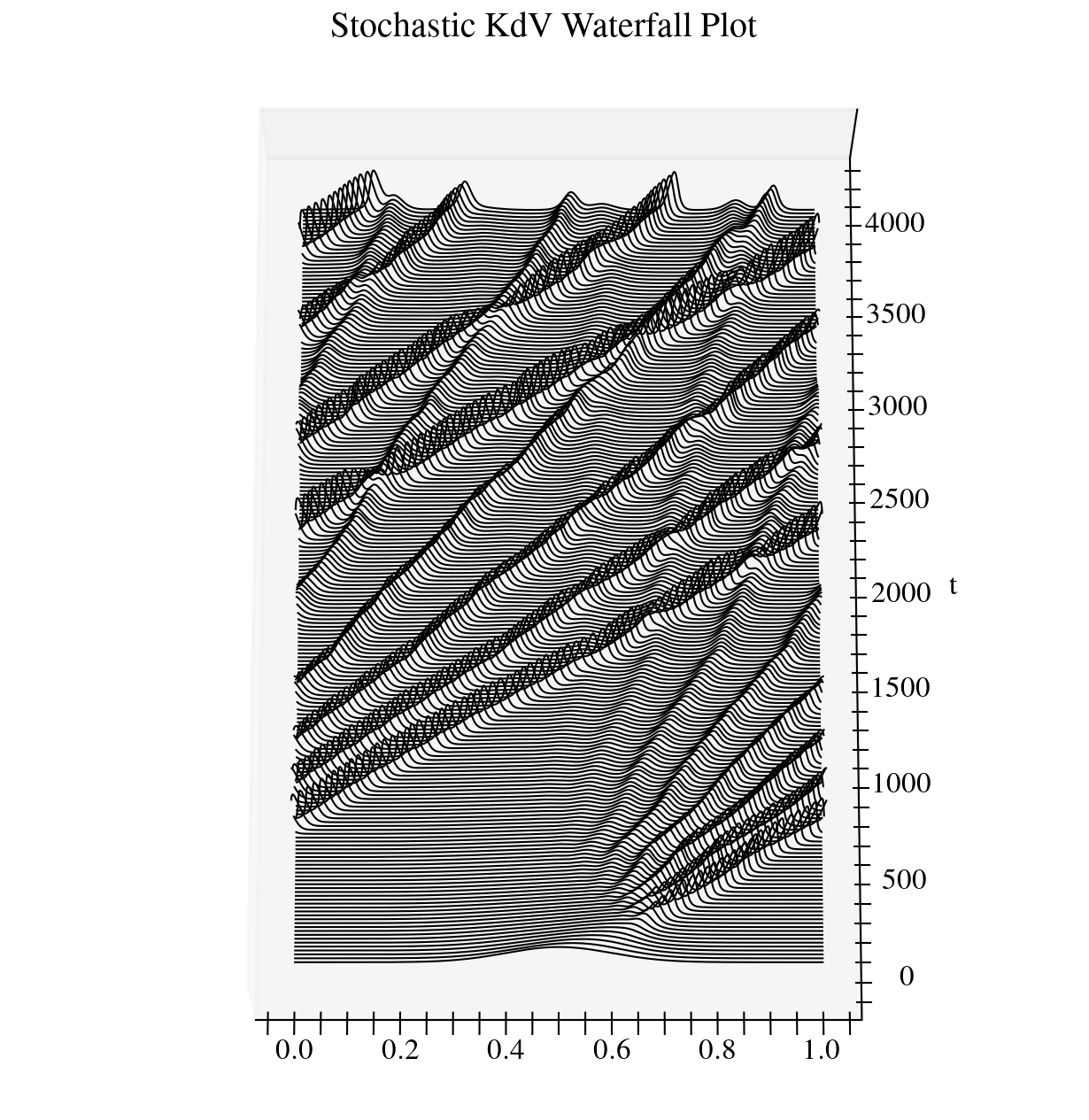}
\caption{KdV-Stochastic}
\label{fig:KdV_Waterfall_stochastic}
\end{subfigure}
\begin{subfigure}{.245\textwidth}
    \centering
\includegraphics[trim={70pt 10pt 40pt 45pt}, clip,width=.95\linewidth]{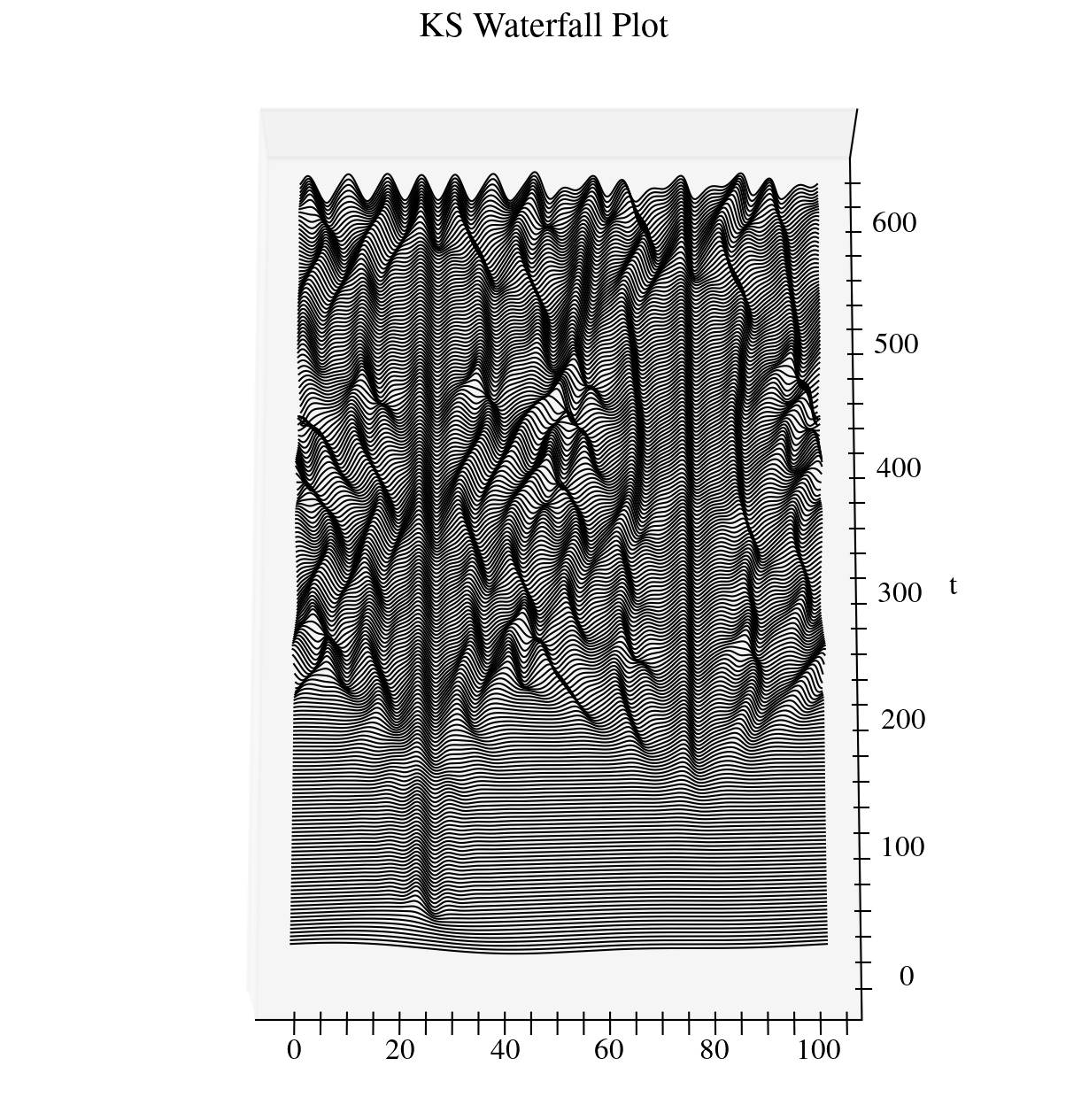}
\caption{KS}
\label{fig:KS_Waterfall}
\end{subfigure}
\begin{subfigure}{.245\textwidth}
\centering
\includegraphics[trim={70pt 10pt 40pt 45pt}, clip,width=.95\linewidth]{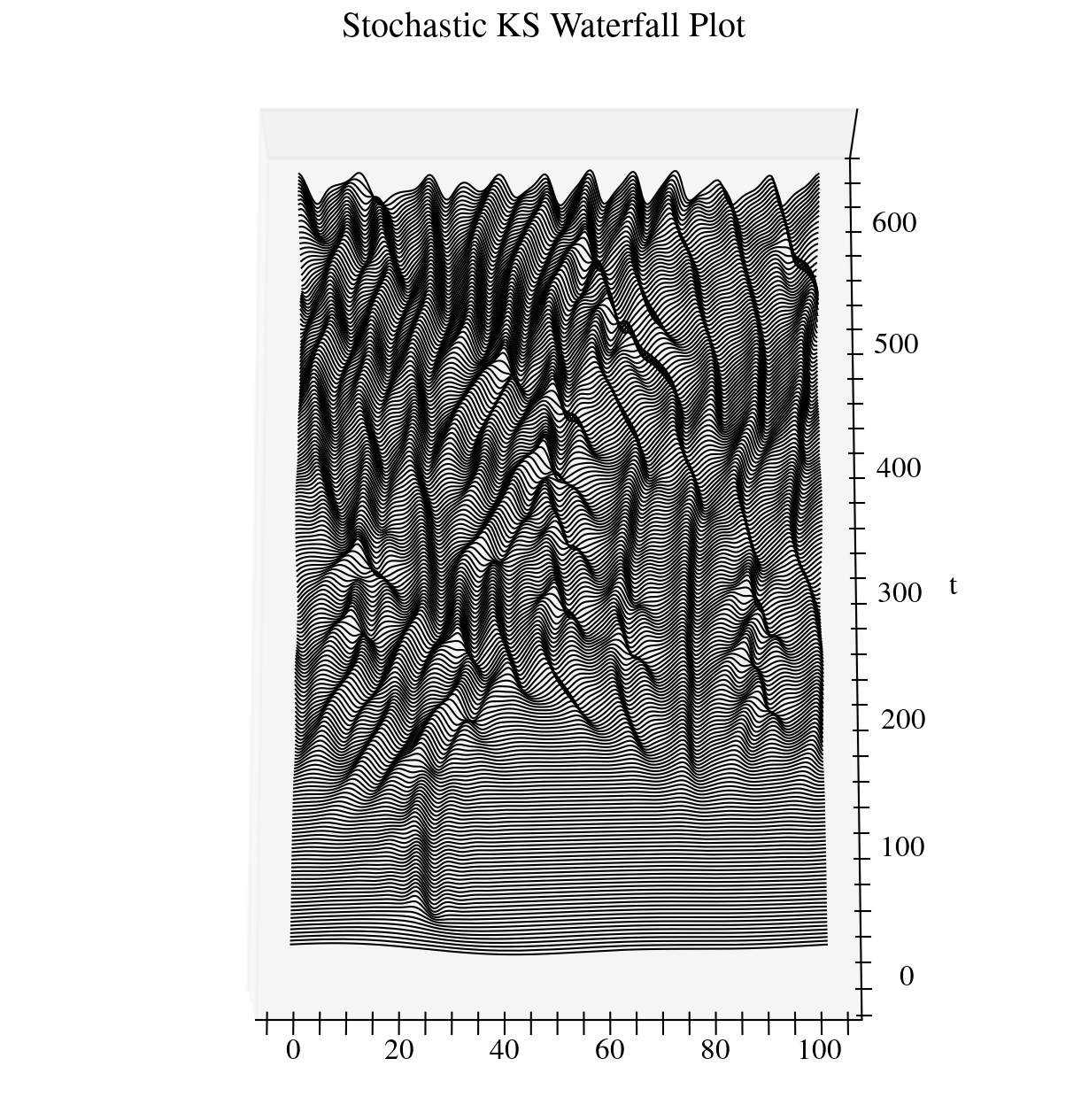}
\caption{KS-Stochastic}
\label{fig:KS_Waterfall_stochastic}
\end{subfigure}
\caption{Waterfall Plots of deterministic and stochastic KdV and KS equations under nonlinear transport noise.}
\label{fig:waterfall_KDV_KdV}
\end{figure}


\section{Convenient Stochastic Integration}\label{sec:con}
\subsection{Setup}\label{sec:setup}


In this work, we study the numerical integration of stochastic differential equations (SDEs) using modified deterministic solvers. We describe the setting below. Let $(\Omega, \mathcal{F}, \mathbb{P})$ be a complete probability space with filtration $\{ \mathcal{F}_t \}_{t \geq 0}$ satisfying the usual conditions. We consider the $d$-dimensional Stratonovich SDE
\begin{align}
    u_t = u_0 + \int_{0}^{t} f(s, u_s) \, ds + \sum_{m=1}^{M} \int_{0}^{t} g_m(s, u_s) \circ dW^m_s, \label{eq:Stratonovich sde}
\end{align}
where $u_0 \colon \Omega \rightarrow \mathbb{R}^d$ denotes the initial condition, assumed {$\mathcal{F}_0$-}measurable and with finite {second} moment $\mathbb{E}||u_0||_2^2\leq \infty$. The drift and diffusion coefficients $f$ and $\{g_m\}_{m=1}^M$ are $d$-dimensional continuous functions satisfying global Lipschitz condition and linear growth bounds on the Itô-Stratonovich corrected drift and diffusion, and $\{W^i\}_{m=1}^{M}$ are independent standard Brownian motions adapted to $\{\mathcal{F}_t\}_{t\geq 0}$ \cite{lord2014introduction,burrage2000order}.\newline

Given a deterministic one-step integration scheme
\begin{align}
    y^{n+1} = \Psi(y^n, \Delta t, f), \quad n = 0, 1, \dots, N, \label{eq:ODE_solver}
\end{align}
designed to approximate solutions of the d-dimensional ordinary differential equation (ODE)
\begin{align}
    \frac{dy}{dt} = f(t, y), \quad y(0) = y_0.\label{eq:ode1}
\end{align}
We define a convenient stochastic integration scheme as an abstract one-step stochastic method:
\begin{align}
    u^{n+1} = \widehat{\Psi}(u^n) := \Psi\left(u^n, \Delta t, f + \sum_{m=1}^{M} g_m \frac{\Delta W^m}{\Delta t} \right), \quad \text{where}\quad \Delta W^m := W^m(t^{n+1}) - W^m(t^n) \sim \sqrt{\Delta t}\,\mathcal{N}(0, 1). \label{eq:stochastic_abstract_method}
\end{align}
In what follows, we prove convergence results for the method \eqref{eq:stochastic_abstract_method} to solutions of the SDE \eqref{eq:Stratonovich sde}. 

\subsection{Convergence}\label{sec:convergence}
 We shall study convergence in the following mean-square sense:
\begin{definition}[Mean-square convergence] Let $u^N$ be the numerical approximation to $u(t^N)$ after $N$ steps with constant stepsize
$\Delta t$; then $u_N$ is said to converge strongly (mean square) to $u$ with strong global order $p$ if $\exists C>0$
(independent of $\Delta t$) and $\tau>0$ such that
\begin{align}
\mathbb{E}\left[||u_{N}-u(t_{N}) ||_2^2\right]^{1/2}\leq C \Delta t^{p},
\quad \forall \Delta t \leq \tau.
\end{align}
\end{definition}
Mean-square converge implies strong convergence under Lyapunov's inequality, and implies pathwise/strong convergence as defined in \cite{kloeden2011numerical}. In order to prove convergence properties about one-step approximations, we use Milstein and Tretyakov's Fundamental Theorem of Mean Square Convergence (FTMSC) \cite{milstein2004stochastic} stated below:


\begin{theorem}[FTMSC \cite{milstein2004stochastic}] \label{thm:FTMSC}
Suppose the one-step approximation $\bar{u}_{t, x}(t+\Delta t) := \widehat{\Psi}(x,\Delta t,f+\frac{1}{h}\sum_{m=1}^{M}g_m \Delta W^m)$ starting from $x = u(t)$ and the exact solution $ u(t+\Delta t)$ satisfies the local error estimates:
\begin{align}
\left\lVert \mathbb{E}\left[\bar{u}_{t, x}(t+\Delta t)-u(t+\Delta t)\right]\right\rVert_2 & \leq K\left(1+||x||_2^2\right)^{1 / 2} (\Delta t)^{p_1} ,\\
{\left[\mathbb{E}\left\lVert \bar{u}_{t, x}(t+\Delta t)-u(t+\Delta t)\right\rVert^2_2\right]^{1 / 2} } & \leq K\left(1+||x||_2^2\right)^{1 / 2} (\Delta t)^{p_2},
\end{align}
for
\begin{align}
p_2 \geq \frac{1}{2},\quad p_1 \geq p_2+\frac{1}{2}, \quad t_0 \leq t \leq T-\Delta t, \quad x \in \mathbb{R}^d.
\end{align}
Then $\bar{u}_{t, x}(t+\Delta t)$ is global mean square (strong) order $p=p_2-1/2$, i.e. the global approximation $\bar{u}_{t_0,u_0}(t_k)$, obtained by iterating the one-step map $k$ times from initial condition $u_0$ satisfies:
\begin{align}
\left[\mathbb{E}\left\lVert \bar{u}_{t_0, u_0}\left(t_k\right)-u\left(t_k\right)\right\rVert_2^2\right]^{1 / 2} \leq K\left(1+E\left\lVert u_0\right\rVert_2^2\right)^{1 / 2} (\Delta t)^{p_2-1 / 2},\quad k=0,1, \ldots, N, \quad \forall N
\end{align}
\end{theorem}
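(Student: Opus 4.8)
The plan is to establish the classical local-to-global error estimate for stochastic one-step maps, in the spirit of Milstein and Tretyakov, by telescoping through hybrid trajectories and accumulating the ``drift-like'' and ``martingale-like'' parts of the local error separately. Write $X_{t,x}(s)$ for the exact solution of \eqref{eq:Stratonovich sde} at time $s$ started from $x$ at time $t$, set $t_k=t_0+k\Delta t$, $N=(T-t_0)/\Delta t$, and let $\bar u_k:=\bar u_{t_0,u_0}(t_k)$ be the numerical trajectory. \textbf{Step 1 (a priori moments):} first I would show $\max_{0\le k\le N}\mathbb{E}\|\bar u_k\|_2^2\le C(1+\mathbb{E}\|u_0\|_2^2)$ with $C$ independent of $\Delta t$, by combining the one-step hypotheses with the linear-growth bounds on $f$ and $\{g_m\}$ (and the known moment bound for the exact solution) via a discrete Grönwall inequality; this lets one replace the factors $(1+\|x\|_2^2)^{1/2}$ in the local estimates by a uniform constant along the numerical trajectory.

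\textbf{Step 2 (telescoping):} introducing the trajectory that follows the scheme for $j$ steps and the exact flow afterwards, and taking successive differences, gives
\[
\bar u_k-u(t_k)=\sum_{j=0}^{k-1}\Big(X_{t_{j+1},\,\bar u_{j+1}}(t_k)-X_{t_{j+1},\,X_{t_j,\bar u_j}(t_{j+1})}(t_k)\Big),
\]
so the $j$-th term is the exact-flow propagation over $[t_{j+1},t_k]$ of the local error $\rho_j:=\bar u_{j+1}-X_{t_j,\bar u_j}(t_{j+1})$, which is $\mathcal F_{t_{j+1}}$-measurable. Because the increments on $[t_j,t_{j+1}]$ are independent of $\mathcal F_{t_j}$ and $\bar u_j$ is $\mathcal F_{t_j}$-measurable, the hypotheses of the theorem apply conditionally, giving $\|\mathbb{E}[\rho_j\mid\mathcal F_{t_j}]\|_2\le K(1+\|\bar u_j\|_2^2)^{1/2}(\Delta t)^{p_1}$ and $\mathbb{E}[\|\rho_j\|_2^2\mid\mathcal F_{t_j}]\le K^2(1+\|\bar u_j\|_2^2)(\Delta t)^{2p_2}$.

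\textbf{Step 3 (accumulation and balancing):} I would split $\rho_j=\mathbb{E}[\rho_j\mid\mathcal F_{t_j}]+(\rho_j-\mathbb{E}[\rho_j\mid\mathcal F_{t_j}])$. Using that the exact flow depends on its initial datum Lipschitz-continuously in mean-square, and with a $(1+K\Delta t)$-per-step factor for its conditional mean — both standard consequences of the Lipschitz and linear-growth assumptions via Itô's isometry and Grönwall — the propagated first parts add up in norm and contribute $O(N(\Delta t)^{p_1})=O((\Delta t)^{p_1-1})$ in $L^2$, while the propagated centered parts are, after the further decomposition noted below, $L^2$-orthogonal (the relevant cross terms vanish by the tower property), so their second moments add and the total contribution is only $O((N(\Delta t)^{2p_2})^{1/2})=O((\Delta t)^{p_2-1/2})$. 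A discrete Grönwall step absorbs the geometric factor $(1+K\Delta t)^{k-j}$ into a constant proportional to $e^{K(T-t_0)}$, and Step 1 supplies the $(1+\mathbb{E}\|u_0\|_2^2)^{1/2}$ prefactor. Since $p_1\ge p_2+\tfrac12$ gives $p_1-1\ge p_2-\tfrac12$, for $\Delta t\le\tau$ the first contribution is bounded by a constant times $(\Delta t)^{p_2-1/2}$; as $p_2\ge\tfrac12$ this exponent is nonnegative, and the global rate $p=p_2-\tfrac12$ follows.

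The main obstacle is Step 3: showing that the conditionally centered local errors retain the $\sqrt N$ (that is, $(\Delta t)^{-1/2}$) cancellation gain after being propagated through the nonlinear, random exact flow, even though the propagated quantities are no longer conditionally mean-zero. The remedy is to propagate the centered part by isolating, inside $X_{t_{j+1},a}(t_k)-X_{t_{j+1},b}(t_k)$ for $\mathcal F_{t_{j+1}}$-measurable $a,b$, a term whose $\mathcal F_{t_{j+1}}$-conditional mean is controlled pointwise by $(1+K\Delta t)^{k-j}\|a-b\|_2$ from a remainder that is itself conditionally centered, so that the martingale orthogonality survives each propagation step; it is this nested conditional-expectation bookkeeping across the telescoping sum, rather than any single inequality, that is delicate.
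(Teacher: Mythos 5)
First, note that the paper does not prove this statement: it is quoted as the Fundamental Theorem of Mean Square Convergence and attributed directly to Milstein and Tretyakov \cite{milstein2004stochastic}, so there is no in-paper proof to compare against. Your proposal is, in outline, a faithful reconstruction of the classical argument from that reference: a priori moment bounds along the numerical trajectory, conditional application of the two local hypotheses, and the balancing $p_1-1\ge p_2-\tfrac{1}{2}$ that converts the $O(N(\Delta t)^{p_1})$ accumulation of conditional means and the $O(\sqrt{N}(\Delta t)^{p_2})$ accumulation of centered fluctuations into the common global rate $p_2-\tfrac{1}{2}$.

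The one place where your formulation is weaker than the standard proof is the telescoping in Steps 2--3. After you propagate the centered local error $\rho_j-\mathbb{E}[\rho_j\mid\mathcal{F}_{t_j}]$ through the exact flow over the whole interval $[t_{j+1},t_k]$, the resulting random variable depends on the Brownian path over all of $[t_{j+1},t_k]$; for $j<j'$ the two propagated terms therefore both depend on the increments over $[t_{j'+1},t_k]$, and their cross term does not vanish by a single application of the tower property. You flag this as the main obstacle and propose an inductive isolation of a conditionally centered remainder, which can be made to work, but it is precisely to avoid this bookkeeping that the standard proof runs a one-step recursion instead: with $S:=X_{t_k,\bar u_k}(t_{k+1})-X_{t_k,u(t_k)}(t_{k+1})$ and $R:=\bar u_{k+1}-X_{t_k,\bar u_k}(t_{k+1})$ one has $\epsilon_{k+1}=S+R$, the only cross term is $\mathbb{E}\langle S,R\rangle$, and it is estimated by writing $S=\epsilon_k+Z$ with $\mathbb{E}[\lVert Z\rVert_2^2\mid\mathcal{F}_{t_k}]\le K\lVert\epsilon_k\rVert_2^2\,\Delta t$ (the linearization lemma you allude to), using $\mathbb{E}\langle\epsilon_k,R\rangle=\mathbb{E}\langle\epsilon_k,\mathbb{E}[R\mid\mathcal{F}_{t_k}]\rangle$ together with the $p_1$ hypothesis and Young's inequality, and Cauchy--Schwarz on $\mathbb{E}\langle Z,R\rangle$. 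This yields $\mathbb{E}\lVert\epsilon_{k+1}\rVert_2^2\le(1+K\Delta t)\,\mathbb{E}\lVert\epsilon_k\rVert_2^2+K\bigl(1+\mathbb{E}\lVert\bar u_k\rVert_2^2\bigr)\bigl((\Delta t)^{2p_1-1}+(\Delta t)^{2p_2}\bigr)$, and a discrete Gr\"onwall argument finishes. I would recommend recasting Step 3 in this recursive form; everything else in your proposal is sound.
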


\begin{definition}[$L^0$, $L^i$ operators] Given the functions $f$, $\lbrace g_i \rbrace_{i=1}^M : [0,T]\times \mathbb{R}^{d} \rightarrow \mathbb{R}^{d}$, we shall define the corresponding operators $L^0,L^i$, as follows
\begin{align}
    L^0 := \frac{\partial}{\partial t}  + f^{k} \frac{\partial}{\partial u^{k}}, \quad L^i  :=g^k_i \frac{\partial}{\partial u^{k}},\quad i=1,...,M,
\end{align}
where the repeated index $k$ is summed over its range $1,...,d$. 
\end{definition}

\begin{definition}[Bracket]
We shall define the commutator of d-dimensional vector-valued functions $f,\lbrace g_i\rbrace_{i=1}^{M}$, through the action of $L^i$ in the following manner
\begin{align}
[f,g_i] := L^{0}g_i -L^if, \quad [g_i,g_j]:=L^i g_j - L^j g_i. \label{def:bracket}
\end{align}
The $l$-th component of this bracket agrees componentwise with the vector field generated by the commutator of the vector field L-operators, namely:
\begin{align}
[L^i,L^j] &:= L^i L^j -L^j L^i = g^k_{i}\frac{\partial g^l_{j}}{\partial u^k} \frac{\partial}{\partial u^l} - g^k_{j}\frac{\partial g^l_{i}}{\partial u^k} \frac{\partial}{\partial u^l} = [g_i,g_j]^{l}\frac{\partial}{\partial u^l}, \quad i,j \in \lbrace 1,...,M\rbrace.\\
[L^0,L^i] &:= L^0 L^i-L^i L^0 = \left(\frac{\partial g^l}{\partial t}  + f^k \frac{\partial g^l}{\partial u^{k}} \right)\frac{\partial }{\partial u^{l}} - g^k \frac{\partial f^l}{\partial u^{k}} \frac{\partial }{\partial u^{l}} = [f,g_i]^l \frac{\partial}{\partial u^l} = 0, \quad i\in \lbrace 1,...,M\rbrace
\end{align} 
Therefore, the commutativity of $f,g_i$ under the definition of commutator defined in \cref{def:bracket} is equivalent to having commutativity of the corresponding Lie operators, since componentwise:
\begin{align}
[f,g_i]^k := [L^0,L^i]^{k}, \quad [g_i,g_j]^k := [L^i,L^j]^{k}.
\end{align}
\end{definition}

\begin{definition}[Commutative] We shall say the basis of noise is commutative if $[g_i,g_j]=0$, or equivalently if $[L^i,L^j]=0$, in summation notation:
\begin{align}
    [g_i,g_j] := g^k_{i}(t,u) \frac{\partial g_j}{\partial u^k} - g^k_{j}(t,u) \frac{\partial g_i}{\partial u^k} = 0, \quad i,j \in \lbrace 1,...,M\rbrace.
\end{align}
\end{definition}
\begin{definition}[Drift Commutative] We shall say the basis of noise is drift commutative if $[f,g_j]=0$, or equivalently if $[L^0,L^j]=0$, in summation notation:
\begin{align}
[f,g_i]:= L^{0}g_i - L^{i}f = \frac{\partial}{\partial t}g_i + f^{k}\frac{\partial g_i}{\partial u^k} - g_{i}\frac{\partial f}{\partial u^k} = 0, \quad j\in \lbrace 1,..., M \rbrace.
\end{align}
\end{definition}

We shall refer to noise as being drift commutative if both $[f,g_i]=0$ and $[g_i,g_j]=0$, $\forall i,j\in \lbrace 1,..,M \rbrace$.

This can be used for the following theorem:

\begin{theorem}\label{thm:onestep} Let $
f, g_{i} \in C_b^{\infty}\left( [0,T]\times \mathbb{R}^d; \mathbb{R}^d\right)$, for all $i = 1,...,M$,
such that global Lipschitz and linear growth bounds hold in the Euclidean norm and sufficient derivatives exist and are bounded. 
Let \cref{eq:ODE_solver} denote an abstract one-step deterministic global order $P$ numerical integrator of \cref{eq:ode1} with fixed timestep $\Delta t = t^{n+1}- t^n$ for all $n$.
Defining $u(t)$ to be the strong solution to the Stratonovich SDE \cref{eq:Stratonovich sde}.
Then defining $\widehat{\Psi}(u^n)$ as a convenient stochastic one-step integrator as in \cref{eq:stochastic_abstract_method}.
Then the following properties hold:
\begin{enumerate}
    \item \label{item1} If $P \geq 2$, then the sequence $\lbrace u_n \rbrace_{n=0}^{N}$ generated by \cref{eq:stochastic_abstract_method} converges to the solution $u(t)$ of \cref{eq:Stratonovich sde} 
with global (mean square) strong order $1/2$. Meaning, the following estimate holds
\begin{align}
\big(\mathbb{E}\|u_n - u(t_n)\|^2_2\big)^{1/2} \leq C (\Delta t)^{1/2}, \quad \forall n = 0,1,...,N,\quad\forall N.
\end{align}
\item \label{item2} Method \cref{eq:stochastic_abstract_method} captures all symmetric terms in the Stratonovich-Taylor expansion up to (strong) order $P/2$, independent of whether $P$ is even or odd. 
    \item \label{item3}  If $P \geq 2$, and $P$ is even and the vector fields drift commute $[f,g_{i}]=L^0g_i- L^i f = 0$, $[g_i, g_j] = L^ig_j- L^jg_i = 0, \quad \forall i,j \in \lbrace 1,...,M\rbrace,$ then $u_{n+1}=\widehat{\Psi}( u_n)$ generated from the onestep-map \cref{eq:stochastic_abstract_method} from the previous approximation $u_n$ satisfies the following \emph{local} error estimates 
    \begin{align}
    \mathbb{E}[u_{n+1} - u(t_{n+1})] &= \mathcal{O}(\Delta t^{P/2+1}),\\
    \left(\mathbb{E}[||u_{n+1} - u(t_{n+1})||_2^2]\right)^{1/2} &= \mathcal{O}(\Delta t^{P/2+1/2}),
    \end{align}
   sufficient by the FTMSC \cref{thm:FTMSC}-\cite{milstein2004stochastic} to prove the sequence $\lbrace u_n \rbrace_{n=0}^{N}$ generated by \cref{eq:stochastic_abstract_method} converges to the solution $u(t)$ of the Stratonovich SDE \cref{eq:Stratonovich sde} with strong global (mean square) order $P/2$, with the strong global error bound 
\begin{align}
    \big(\mathbb{E}[\|u_n - u(t_n)\|^2_2]\big)^{1/2} \leq C (\Delta t)^{P/2}, \quad \forall n = 0,1,...,N, \quad \forall N.
\end{align}
\item \label{item4} If $P \geq 2$, and $P$ is odd and the vector fields drift commute $[f,g_i]=0$, $[g_i, g_j] = L^ig_j- L^jg_i = 0, \quad \forall i,j \in \lbrace 1,...,M\rbrace$, then $u_{n+1}$ generated by \cref{eq:stochastic_abstract_method} from initial condition $u_n$ satisfies the following local error estimates 
    \begin{align}
    \mathbb{E}[u_{n+1} - u(t_{n+1})] &= \mathcal{O}(\Delta t^{P/2})\\
    \left(\mathbb{E}[||u_{n+1} - u(t_{n+1})||_2^2]\right)^{1/2} &= \mathcal{O}(\Delta t^{P/2+1/2})
    \end{align}
   sufficient by the FTMSC \cite{milstein2004stochastic} to converge to the solution $u(t)$ of the Stratonovich SDE \cref{eq:Stratonovich sde} with strong order $P/2-1/2$, with the strong global error bound
\begin{align}
    \big(\mathbb{E}[\|u_n - u(t_n)\|^2_2]\big)^{1/2} \leq C (\Delta t)^{P/2-1/2}.
\end{align}
\item \label{item5} If $g_i = 0,$ for $i=1,...,M$ we recover deterministic order $P$ convergence.
\item \label{item6} If $P \geq 2$, and $[g_i, g_j] = L^ig_j- L^jg_i = 0, \quad \forall i,j \in \lbrace 1,...,M\rbrace,$ then $u_{n+1}=\widehat{\Psi}( u_n)$ generated from the onestep-map \cref{eq:stochastic_abstract_method} from the previous approximation $u_n$ satisfies the following \emph{local} error estimates 
    \begin{align}
    \mathbb{E}[u_{n+1} - u(t_{n+1})] &= \mathcal{O}(\Delta t^{2}),\\
    \left(\mathbb{E}[||u_{n+1} - u(t_{n+1})||_2^2]\right)^{1/2} &= \mathcal{O}(\Delta t^{3/2}),\label{eq:ruemelin equality}
    \end{align}
   sufficient by the FTMSC \cite{milstein2004stochastic} to prove the sequence $\lbrace u_n \rbrace_{n=0}^{N}$ generated by \cref{eq:stochastic_abstract_method} converges to the solution $u(t)$ of the Stratonovich SDE \cref{eq:Stratonovich sde} with strong global (mean square) order $1$, with the strong global error bound 
\begin{align}
    \big(\mathbb{E}[\|u_n - u(t_n)\|^2_2]\big)^{1/2} \leq C (\Delta t)^{1}, \quad \forall n = 0,1,...,N, \quad \forall N.
\end{align}
\end{enumerate}
\end{theorem}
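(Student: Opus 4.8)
The plan is to prove all properties via one unifying mechanism: compare the Stratonovich--Taylor expansion of the exact solution $u(t^n+\Delta t)$ against the deterministic Lie-series / B-series expansion of $\Psi\big(u^n,\Delta t, f+\sum_m g_m \Delta W^m/\Delta t\big)$, treating $\Delta W^m/\Delta t$ as a (random, $\mathcal F_{t^n}$-independent) parameter in the vector field. Since the deterministic scheme has order $P$, its expansion agrees with the exact $\Delta t$-flow of the perturbed vector field $F:=f+\sum_m g_m W'$ through order $P$ in $\Delta t$ (uniformly in the bounded parameter, using $f,g_i\in C_b^\infty$). The exact flow of $F$ over time $\Delta t$ is the ODE-Taylor series in powers of $\Delta t$ with coefficients built from iterated $L^0,L^i$-operators where each $L^i$ carries a factor $\Delta W^m$. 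Regrouping, the scheme's expansion is $\sum_{k\le P}$ (terms that are degree-$k$ polynomials in $\Delta t$ and $\{\Delta W^m\}$), and crucially each monomial $\Delta t^{a}\prod \Delta W^{m_j}$ appearing at ``ODE order $k=a+\#\{j\}$'' has stochastic order (in the FTMSC sense, counting $\Delta W\sim\Delta t^{1/2}$) exactly $a+\tfrac12\#\{j\}\ge k/2$. This is the key observation that drives everything. I first set this up carefully, establishing the moment bounds $\mathbb E\|\cdot\|^2\le K(1+\|x\|_2^2)(\Delta t)^{\text{order}}$ and the linear-growth control needed to invoke \cref{thm:FTMSC}; the remainder term is $O(\Delta t^{P+1})$ in the parameter-uniform sense, hence negligible once $P\ge 2$.

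For \cref{item1}: the scheme's expansion contains the terms $x + f\Delta t + \sum_m g_m\Delta W^m + \tfrac12\sum_{m,m'} L^{m}g_{m'}\,\Delta W^m\Delta W^{m'} + \dots$, while the Stratonovich--Taylor expansion of the exact solution is $x+f\Delta t+\sum_m g_m\Delta W^m+\sum_{m,m'}L^mg_{m'}J_{(m',m)}+\dots$. The leading $\Delta t^{1/2}$ and $\Delta t$ terms match, and all mismatches are at ODE-order $\ge 2$, i.e.\ stochastic order $\ge 1$; combined with the fact that $\mathbb E[\Delta W^m]=0$ kills the order-$1/2$ mean contribution, one gets $p_1\ge 1$, $p_2\ge 1$, hence global order $1/2$. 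For \cref{item5} set all $g_i=0$: the expansion is purely the deterministic order-$P$ one, giving local error $O(\Delta t^{P+1})$ and global order $P$. For \cref{item2}: the symmetric part of the iterated Stratonovich integrals is exactly $\tfrac{1}{k!}\Delta W^{m_1}\cdots\Delta W^{m_k}$ (this is what \cref{sec:sym_J} establishes), which is precisely the coefficient structure produced by exponentiating $\sum_m g_m W'$ in the Lie series; so every symmetric Stratonovich--Taylor term of ODE-order $\le P$ is reproduced exactly by the scheme, independent of parity — this is a direct term-matching argument once the two expansions are written side by side.

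For \cref{item6} (commutative noise) and \cref{item3,item4} (drift commutative): here I use that $[g_i,g_j]=0$ (and $[f,g_i]=0$) makes the relevant antisymmetric iterated-integral terms drop out of the \emph{exact} expansion, because e.g.\ $\sum_{m<m'}[g_m,g_{m'}]A_{(m,m')}$ (the L\'evy-area terms) and the mixed drift-diffusion commutator terms vanish identically. What remains in the exact expansion at the critical orders is purely symmetric, hence — by \cref{item2} — matched exactly by the scheme; the first genuine mismatch is then pushed to ODE-order $P+1$ (commutative, drift-commutative, $P$ even) or to a place where the mismatched monomial still has stochastic order $\ge P/2+1/2$ in mean-square but only $P/2$ in mean (the parity split in \cref{item3} vs \cref{item4} comes from whether the lowest unmatched monomial has an even or odd number of $\Delta W$ factors: an odd count has zero mean, buying the extra $1/2$ in $p_1$; an even count does not). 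For \cref{item6} specifically one only needs $[g_i,g_j]=0$ to clear the L\'evy areas, which upgrades $p_2$ from $1$ to $3/2$ and $p_1$ from $1$ to $2$, giving global order $1$. I would carry these out in the order \cref{item1}, \cref{item5}, \cref{item2}, then \cref{item6}, then \cref{item3,item4}, since each reuses the expansion bookkeeping of the previous.

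The main obstacle is the bookkeeping in the first paragraph: making rigorous the claim that the order-$P$ deterministic B-series / Lie-series expansion, when the vector field is perturbed by the bounded random parameter $W'=\Delta W/\Delta t$, yields a remainder that is genuinely $O(\Delta t^{P+1})$ \emph{and} whose coefficient tree-sums, after substituting $\Delta W^m\sim\sqrt{\Delta t}$ and taking expectations, obey the required $(1+\|x\|_2^2)^{1/2}$-weighted moment bounds uniformly. One must be careful that the deterministic order-$P$ error constant depends polynomially (not worse) on the size of the vector field, so that plugging in $\|g_m\|_\infty|\Delta W^m|/\Delta t$ — which is large — does not blow up; the resolution is that each such large factor is always accompanied by a compensating power of $\Delta t$ from the flow expansion, so one should re-expand the deterministic local error in $\Delta t$ first and only then substitute, rather than applying the deterministic bound as a black box. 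Once that regrouping is done correctly, the parity and commutativity refinements are essentially term-counting and the FTMSC does the rest.
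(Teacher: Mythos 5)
Your overall strategy --- comparing the Stratonovich--Taylor expansion of the exact solution with the deterministic Taylor/Lie-series expansion of the one-step map under the substitution $\Delta t\, f \mapsto \Delta t\, f + \sum_m g_m \Delta W^m$, matching the symmetric parts via the shuffle identity of \cref{sec:sym_J}, expressing the mismatch through commutators, and feeding the resulting local moment estimates into the FTMSC --- is essentially identical to the paper's proof, including the parity mechanism distinguishing \cref{item3} from \cref{item4} and the observation that the order-$(P+1)$ truncation term must be re-expanded in $\Delta t$ before substituting (the paper isolates this as the B-series assumption of \cref{sec:hidden assumption}).

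There is, however, one concrete gap in your treatment of \cref{item1}. You conclude ``$p_1 \ge 1$, $p_2 \ge 1$, hence global order $1/2$,'' but \cref{thm:FTMSC} requires $p_1 \ge p_2 + \tfrac12$; with $p_2 = 1$ you need $p_1 \ge \tfrac32$, and the bound $p_1 \ge 1$ yields no conclusion. Moreover, $\mathbb{E}[\Delta W^m]=0$ is not where the missing half-order comes from: the order-$\tfrac12$ terms $g_m\Delta W^m$ match exactly in both expansions, so there is no order-$\tfrac12$ mean contribution to kill. The relevant fact is that the leading mismatch --- the L\'evy-area term $\tfrac12\sum_{1\le j_1,j_2\le M} L^{j_1}g^k_{j_2}\operatorname{Alt}(J_{j_1,j_2})$, of stochastic order $1$ --- has zero expectation, which upgrades $p_1$ to $\tfrac32$. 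Note that your own parity heuristic (``an odd count of $\Delta W$ factors has zero mean'') does not cover this term, since it carries \emph{two} Brownian increments; you must instead invoke the antisymmetry $\mathbb{E}[J_{j_1,j_2}]=\mathbb{E}[J_{j_2,j_1}]$ for independent Brownian motions together with $\operatorname{Alt}(J_{j,j})=0$, which is exactly how the paper obtains $p_1=\tfrac32$, $p_2=1$ before applying the FTMSC. With that correction the remainder of your argument goes through as written.
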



\begin{proof}
To simplify notation 
let $g_0=f$, $t = W^0$, and for a multi-index $\b j = \lbrace j_1,...,j_n\rbrace \in \lbrace 0,1,...,M\rbrace^n$, denote nested Stratonovich integration of Brownian motion  in $[t_n, t_{n+1})$ as 
\begin{align}
J_{j_1, \ldots, j_n}:=\int_{t_n}^{t_{n+1}} \left[\int_{t_n}^{s_n} ... \left[\int_{t_n}^{s_{2}} \circ  d W^{j_n}\left(s_1\right)\right]... \circ d W^{j_{n-1}} \left(s_{n-1}\right) \right] \circ d W^{j_n}\left(s_n\right).
\end{align}

Then the $k$-th component of a Stratonovich-Taylor expansion of the analytic solution $u(t+\Delta t)$ of \cref{eq:Stratonovich sde} about local initial condition $u_n$ at time $t_n$ with timestep $\Delta t$ up to strong order $(P+1)/2$ can be written compactly as
\begin{align}
u^k(t_{n+1})&=u_n^k+\sum_{j=0}^M g^{k}_{j} J_j+\sum_{0 \leq j_1, j_2 \leq m} L^{j_1} g^{k}_{j_2} J_{j_1, j_2} +\sum_{0 \leq j_1, j_2,j_3 \leq M} L^{j_1} L^{j_2} g^{k}_{j_3} J_{j_1, j_2, j_3} +\ldots\\
&+\sum_{0 \leq j_1, j_2,... j_P \leq M} L^{j_1} L^{j_2}... L^{j_{P-1}} g^{k}_{j_P} J_{j_1,j_2,...j_P}+O(\Delta t^{\frac{P+1}{2}}).\label{eq:strat-taylor}
\end{align}
Whereas the $k$-th component of the Taylor expansion of the deterministic flow map up to order $P$ from local initial condition $u_n$ at time $t_n$ with timestep $\Delta t$ with the subsequent substitution $\Delta t f\mapsto \Delta t f+\sum_{i=1}^{M} g_i\Delta W^i$, takes the following local form
\begin{align}
\hat{u}^k_{n+1} &= u_{n}^k + \sum_{j=0}^M g^{k}_{j}J_{j} +\frac{1}{2} \sum_{0 \leq j_1, j_2 \leq M} L^{j_1} g^{k}_{j_2} J_{j_1}J_{j_2}  +\frac{1}{3!}\sum_{0 \leq j_1, j_2,j_3 \leq M} L^{j_1} L^{j_2} g^{k}_{j_3} J_{j_1}J_{j_2} J_{j_3}+ ...\\
&+\frac{1}{P!}\sum_{0 \leq j_1, j_2,... j_P \leq M} L^{j_1} L^{j_2}... L^{j_{P-1}} g^{k}_{j_P} J_{j_1}J_{j_2}... J_{j_P}+ O(\Delta t^{\frac{P+1}{2}}). \label{eq:taylor-sub}
\end{align}
Where all terms are evaluated at $(t^n,u_n)$ unless otherwise specified. Let $\mathrm{\b j}$ denote a multi-index length $n$, then define the symmetric parts of $J_{\mathrm{\b j}}$ by summation over all permutations in the symmetric group $S_n$, then the following identity 
\begin{align}
\operatorname{Sym}\left(J_{\mathrm{\b j}}\right) = \frac{1}{n!} \sum_{\sigma \in S_n} J_{j_{\sigma(1)}, \ldots, j_{\sigma(n)}} = \frac{1}{n!}J_{j_1}J_{j_2}...J_{j_n}, \quad n =  1,2..., \label{eq:Symetric equals product}
\end{align}
(proven inductively using the shuffle property of Stratonovich integrals \cref{sec:sym_J}),
clarifies all symmetric terms, up to strong order $P/2$ in the stochastic expansion \cref{eq:strat-taylor} are captured by the stochastically modified deterministic scheme in \cref{eq:taylor-sub} proving \cref{item2}.

The remaining non-symmetric terms can be expressed in terms of nested commutator relationships, this is a consequence of the Poincaré-Birchoff-Witt theorem but also arises in a number of different manners and in a variety of different contexts such as the Magnus-Expansion, some selected references are \cite{serre2009lie,reutenauer2003free,baudoin2004introduction,chen1957integration,strichartz1987campbell,liu1997discretization,gaines1995basis,ree1958lie,castell1993asymptotic,kunita2006representation}. For example, at weight order 2 we have the decomposition into symmetric and non-symmetric components expressible as commutators
\begin{align}
\sum_{0 \leq j_1, j_2 \leq M} L^{j_1} g^{k}_{j_2} J_{j_1, j_2} &= \sum_{0 \leq j_1, j_2 \leq M} L^{j_1} g^{k}_{j_2} \operatorname{Sym}(J_{j_1, j_2}) + \sum_{0 \leq j_1, j_2 \leq M} L^{j_1} g^{k}_{j_2} \operatorname{Alt}(J_{j_1, j_2})\\
&= \frac{1}{2}\sum_{0 \leq j_1, j_2 \leq M} L^{j_1} g^{k, j_2} J_{j_1}J_{ j_2} + \frac{1}{2} \sum_{0 \leq j_1, j_2 \leq M}\left(L^{j_1} g^{k, j_2}-L^{j_2} g^{k, j_1}\right) J_{j_1, j_2}.
\end{align}

Similarly at weight order 3, decomposing $J_{j_1,j_2,j_3}$ as a rank-3 tensor into symmetric, anti-symmetric and two remainder tensors of mixed type, then upon re-indexing and using the anti-symmetric properties of the mixed tensors in \cref{eq:commutator expansion} one attain the following representation
\begin{align}
\sum_{0\leq j_1,j_2,j_3 \leq M}L^{j_1}L^{j_2}g_{j_{3}} J_{j_{1},j_{2},j_{3}} &= \sum_{0\leq j_1,j_2,j_3 \leq M}L^{j_1}L^{j_2}g_{j_{3}} J_{j_{1}}J_{j_{2}}J_{j_{3}} +  \frac{1}{3!}\sum_{0\leq j_1,j_2,j_3 \leq M} \left( [g_{j_{1}},[g_{j_{2}},g_{j_{3}}]] \right) J_{j_{1},j_{2},j_{3}} \\
& +
\frac{1}{2} \sum_{0\leq j_1,j_2,j_3 \leq M} L^{j_1}\left[g_{j_{2}} , g_{j_3}\right] J_{j_{1},j_{2},j_{3}}+  \frac{1}{2} \sum_{0\leq j_1,j_2,j_3\leq M}[L^{j_1},L^{j_{2}}]g_{j_3}J_{j_{1},j_{2},j_{3}}.
\end{align}


In order to prove \cref{item1}, 
suppose that $P\geq 2$, and $f,g_i$ do not commute or satisfy any other higher order commutative identities, consider $u(t+\Delta t) - \hat{u}_{n+1}$, using \cref{eq:strat-taylor} and \cref{eq:taylor-sub}, one attains that the leading order term is $\mathcal{O}(\Delta t)$, and given by the Lévy area correction 
\begin{align}
u^k(t+\Delta t) - \hat{u}^k_{n+1} = \frac{1}{2} \sum_{1\leq j_1,j_2\leq M} L^{j_1}g^k_{j_2}\operatorname{Alt}(J_{j_{1},j_{2}}) + \mathcal{O}(\Delta t^{3/2}).
\end{align}
The expectation of the Lévy area is 0 and the expectation of the square of the Lévy area using Itô's isometry is $\Delta t^2$, and is sufficient to give local error estimates of at least
\begin{align}
\mathbb{E}[u(t_{n+1}) - \hat{ u}_{n+1}] &= \mathcal{O}((\Delta t)^{3/2}), \quad \left( \mathbb{E}[||u(t_{n+1}) - \hat{ u}_{n+1}||_2^2] \right)^{1/2} = \mathcal{O}((\Delta t)^{1}).
\end{align}
Which is sufficient by FTMSC \cite{milstein2004stochastic} for strong order 1/2, with the \emph{global} error estimate
\begin{align}
\left( \mathbb{E}[||u(t_{n+1}) - \hat{u}_{n+1}||_2^2] \right)^{1/2} &= \mathcal{O}((\Delta t)^{1/2}), \quad \forall n=0,1,...,N.
\end{align}\newline



In order to prove \cref{item6}, suppose $[g_i,g_{j}] = 0$ for $i,j=1,...,M$, but $[g_i,g_{0}] \neq 0$, in this case the $\mathcal{O}(\Delta t^{3/2})$ contribution of $$\frac{1}{3!}\sum_{0\leq j_1,j_2,j_3 \leq M} \left( [g_{j_{1}},[g_{j_{2}},g_{j_{3}}]]\right)J_{j_{1},j_{2},j_{3}},$$
is zero, consequently the leading order $\mathcal{O}(\Delta t^{3/2})$ term arises from the following term
\begin{align}
\frac{1}{2} \sum_{0 \leq j_1, j_2 \leq M}\left(L^{j_1} g^{k, j_2}-L^{j_2} g^{k, j_1}\right) J_{j_1, j_2}. 
\end{align}
Which upon re-indexing, using the commutativity condition, the shuffle property $J_{0,j_1} = -J_{j_1,0} + J_{0}J_{j_1}$ gives
\begin{align}
u^k(t+\Delta t) - \hat{u}^k_{n+1}  =  \frac{1}{2} \sum_{1 \leq j_1 \leq M}\left(L^{0} g^{k, j_1}-L^{j_1} g^{k, 0}\right) J_{0}J_{j_1} + \sum_{1 \leq j_1 \leq M}\left(L^{j_1} g^{k, 0}-L^{0} g^{k, j_1}\right) J_{j_1, 0} + \mathcal{O}(\Delta t^{2}).\label{eq:order 3 over 2}
\end{align}
In expectation, the leading order term in \cref{eq:order 3 over 2} vanishes, but the square is $\mathcal{O}(\Delta t^3)$, such that 
\begin{align}
\mathbb{E}[ u(t_{n+1}) - \hat{ u}_{n+1}] &= \mathcal{O}((\Delta t)^{2}), \quad \left( \mathbb{E}[|| u(t_{n+1}) - \hat{ u}_{n+1}||_2^2] \right)^{1/2} = \mathcal{O}((\Delta t)^{3/2}).\label{eq:Rumelin commutative estimate}
\end{align}
Sufficient by the FTMSC \cref{thm:FTMSC} to establish the following global error estimate 
\begin{align}
\left( \mathbb{E}[||u(t_{n+1}) - \hat{ u}_{n+1}||_2^2] \right)^{1/2} &= \mathcal{O}((\Delta t)^{1}), \quad \forall n=0,1,...,N,
\end{align}
implying global mean square order $1$ for a commutative basis of noise, proving \cref{item6}.\newline

In order to prove \cref{item3}, suppose that $P\geq 2$ is even, and $[g_i,g_j]=0$ for $i,j=0,...,P$, then invoking that the non-symmetric component of a Stratonovich-Taylor expansion term is expressible in terms of commutator relationships
the leading error term is
\begin{align}
u^k(t_{n+1}) - \hat{u}^k_{n+1} &= \sum_{0 \leq j_1, j_2,... j_{P+1} \leq M} L^{j_1} L^{j_2}... L^{j_{P}} g^{k}_{j_{P+1}} J_{j_1,j_2,...j_{P+1}} + \mathcal{O}((\Delta t)^{P/2+1})\label{eq:assumption}\\
& =  \frac{1}{(P+1)!}\sum_{0 \leq j_1, j_2,... j_{P+1} \leq M} L^{j_1} L^{j_2}... L^{j_{P}} g^{k}_{j_{P+1}} J_{j_1}J_{j_2}...J_{j_{P+1}} + \mathcal{O}((\Delta t)^{P/2+1}).
\end{align}
Taking expectation, and noting that the expectation of the product of an odd number of symmetric random variables gives zero, gives the following local error estimate of the mean error and mean square error respectively
\begin{align}
\mathbb{E}[ u(t_{n+1}) - \hat{ u}_{n+1}] &= \mathcal{O}((\Delta t)^{P/2+1}), \quad \left( \mathbb{E}[|| u(t_{n+1}) - \hat{ u}_{n+1}||_2^2] \right)^{1/2} = \mathcal{O}((\Delta t)^{P/2+1/2}).
\end{align}
Therefore, by the FTMSC \cite{milstein2004stochastic} one establishes the following global error estimate 
\begin{align}
\left( \mathbb{E}[||u(t_{n+1}) - \hat{ u}_{n+1}||_2^2] \right)^{1/2} &= \mathcal{O}((\Delta t)^{P/2}), \quad \forall n=0,1,...,N,
\end{align}
implying global mean square order $P/2$ (strong), when $P$ is even.\newline

For \cref{item4}, if instead $P$ is odd, one instead attains $\mathbb{E}[J_{j_1}...J_{j_{P+1}}] = \mathcal{O}((\Delta t)^{P/2+1/2})$ sufficient for the following local error estimates
\begin{align}
\mathbb{E}[u(t_{n+1}) - \hat{ u}_{n+1}] &= \mathcal{O}((\Delta t)^{P/2+1/2}),\quad
\left( \mathbb{E}[||u(t_{n+1}) - \hat{u}_{n+1}||_2^2] \right)^{1/2} = \mathcal{O}((\Delta t)^{P/2+1/2}).
\end{align}
Therefore, using the FTMSC \cite{milstein2004stochastic} with the previous local estimates is only sufficient to prove (strong) mean square convergence order $P/2-1/2$ with the following global error estimate 
\begin{align}
\left( \mathbb{E}[||u(t_{n+1}) - \hat{ u}_{n+1}||_2^2] \right)^{1/2} &= \mathcal{O}((\Delta t)^{P/2-1/2}). 
\end{align}
\newline
\Cref{item5} is true by construction.
\end{proof}

\begin{remark}
In the above theorem: 
\begin{enumerate}
    \item We do not assume any structure of the one step map\footnote{with the exception of \cref{item3,item4} requiring a B-series expansion of the numerical method, true for RK,IFRK,ETDRK schemes \cref{sec:hidden assumption}.}. Meaning that the theoretical convergence properties of \cref{thm:onestep} are applicable to existing deterministic methods including RK, IFRK and ETDRK schemes. 
    \item We found that when using a odd order deterministic integrator the expectation of the deviation (mean of the error), was not sufficient to prove order $P/2$ by the FTMSC under drift commuting noise, and lowered the order of provable convergence by 1/2. 
    \end{enumerate}
\end{remark}
\begin{definition}
We shall refer to a scheme with quadruple order $(P_{d},P_{dc},P_{c},P_s)$.  $P_{d}$ denotes the deterministic global order of convergence. $P_{dc}$ denotes the mean square strong stochastic global order convergence for a drift commutative basis of noise. $P_{c}$ denotes mean square strong stochastic global order convergence for a commutative basis of noise. $P_{s}$ refers to the strong stochastic order for general multidimensional noise. 
\end{definition}
\begin{remark} Informally, 
\cref{thm:onestep} states any deterministic one-step map of order $p\geq 2$ modified to solve a SDE under the transform
\cref{eq:stochastic_abstract_method} has deterministic, drift commutative, commutative, stochastic order of at least
\begin{align}
(P_{d},P_{dc},P_{c},P_s)  = (p,\operatorname{int}(p/2),1,1/2),
\end{align}
understood in the global mean square sense. There are typical regularity caveats, and the $P_{dc}$ condition requires the method to have a B-series expansion such as in RK-IFRK-ETDRK methods.
\end{remark}   

\section{Methods}\label{sec:methods}
\subsection{Stochastic Runge-Kutta (SRK) methods}\label{sec:SRK methods}
The simplest example of a convenient integration technique is to take any deterministic order $p$ Runge-Kutta scheme uniquely specified by the Butcher-Tableau $(A,b,c)$ under the following mapping 
\begin{align}
f \mapsto f + \sum_{m=1}^{M}g_m \frac{\Delta W_m}{\Delta t},
\end{align}
to give the following SRK scheme:
\begin{method}[SRK]\label{method:SRK}
\begin{align}
u^{(i)} &= u^n +  \Delta t\sum_{j=1}^s a_{ij} \left(  f(t^n + c_j \Delta t, u^j) + g_m\left(t^n + c_j \Delta t, u^{(j)}\right) \frac{\Delta W^m}{\Delta t} \right) , \quad i = 1,...,s,\label{eq:SRK1}\\
u^{n+1} &= u^{n} + \Delta t \sum_{i=1}^s b_i  \left( f(t^n + c_i \Delta t, u^{(i)}) + g_m\left(t^n + c_i \Delta t, u^{(i)}\right) \frac{\Delta W^m}{\Delta t} \right). \label{eq:SRK2}
\end{align} 
\end{method}
\Cref{thm:onestep} applies to \cref{method:SRK}, so that order $P\geq 2$ Butcher-Tableau conditions are sufficient for convergence properties: $(P_d,P_{dc},P_{c},P_{s}) = (p,[p/2],1,1/2)$, capturing all symmetric terms in the Stratonovich-Taylor expansion. Second order Butcher-tableau conditions for $P_{c},P_{s} = 1,1/2$ are consistent with the convergence results derived by Rüemelin (compare \cref{eq:Rumelin commutative estimate} to Theorem 4 in \cite{ruemelin1982numerical}). 


We shall now define three practical SRK schemes tested in this paper:

\begin{method}[SSP22]
\begin{align}
u^{(1)} &= u^n + \Delta t \left( f(t^n,u^{n})+g_m(t^n,u^{n})\frac{\Delta W^m}{\Delta t}  \right)     \\
u^{n+1} &= \frac{1}{2} u^{n} + \frac{1}{2} \left( u^{(1)} + \Delta t \left( f(t^{n}+\Delta t,u^{(1)}) + g_m(t^n+\Delta t,u^{(1)})\frac{\Delta W^m}{\Delta t} \right) \right) 
\end{align}
\Cref{thm:onestep} implies convergence properties $(P_{d},P_{dc},P_{c},P_s)=(2,1,1,1/2)$. Strong stability in the stochastic context is discussed in \cite{woodfield2024strong}.
\end{method}

\begin{remark}[Notation]
We shall present schemes in the time independent case where $f(t,u)=f(u)$, $g_m(t,u) = g_m(u)$, and adopt the shorthand $G(u)\Delta S = g_m(u)\Delta W^m$ for presentation purposes, this is wlog and the time dependent scheme  
can be inferred from \cref{method:SRK}-\cref{eq:SRK1,eq:SRK2}. More specifically: recover $c$ from the consistency condition $c=Ae$ and evaluate temporally dependent functions at $t^n+c_i\Delta t$ in stage $i$.
\end{remark}

\begin{method}[SSP33] 
\begin{align}
u^{(1)} &= u^n + \Delta t \left( f(u^{n})+G(u^{n})\frac{\Delta S}{\Delta t}  \right)     \\
u^{(2)} &= \frac{3}{4} u^{n} + \frac{1}{4} \left( u^{(1)} + \Delta t \left( f(u^{(1)}) + G(u^{(1)})\frac{\Delta S}{\Delta t} \right) \right) \\
u^{(3)} &= \frac{1}{2} u^{n} + \frac{2}{3} \left( u^{(2)} + \Delta t \left( f(u^{(2)}) + G(u^{(2)})\frac{\Delta S}{\Delta t} \right) \right) 
\end{align}
\Cref{thm:onestep} implies convergence properties $(P_{d},P_{dc},P_{c},P_s)=(3,1,1,1/2)$ and symmetric terms of order $3/2$ are captured. For the temporally dependent version use $c = (0,1,1/2)$, \cref{method:SRK}. Strong stability in the stochastic context is discussed in \cite{woodfield2024strong}.
\end{method}

\begin{method}[SRK4]\label{method:SRK4} The SRK4 method is given by  
\begin{align}
u^{(1)} 
& = u_n\\
u^{(2)} 
& = u^{n}+\frac{\Delta t}{2} \left( f\left(u^{(1)}\right) + G\left(u^{(1)}\right)\frac{\Delta S}{\Delta t} \right), \\
u^{(3)}
&= u^n+\frac{\Delta t}{2} \left( f\left(u^{(2)} \right) + G\left(u^{(2)} \right)\frac{\Delta S}{ \Delta t}\right), \\
u^{(4)} 
& = u^n + \Delta t \left( f\left(u^{(3)}\right) +  G\left(u^{(3)}\right) \frac{\Delta S}{\Delta t} \right) , \\
u^{n+1} 
& = u^n+\Delta t\left(\frac{1}{6}  \mathcal{F}\left(u^{(1)}\right)+\frac{1}{3}  \mathcal{F}\left(u^{(1)}\right)+\frac{1}{3}  \mathcal{F}\left(u^{(2)}\right)+\frac{1}{6} \mathcal{F}\left(u^{(3)}\right)\right).\\
\mathcal{F}(a) &:= f(a) +\frac{ G(a) \Delta S}{\Delta t}.
\end{align}
For the temporally varying version $c = (0,1/2,1/2,1)$. 
\Cref{thm:onestep} implies convergence properties $(P_{d},P_{dc},P_{c},P_s)=(4,2,1,1/2)$ and symmetric terms up to order $2$ are captured.
\end{method}

\begin{remark}[Implementation convenience]
A single optimised function $\mathcal{F}$, can be called at each stage in the convenient SRK approach (see \cref{method:SRK4}).
\end{remark}

\subsection{Stochastic Integrating Factor Runge Kutta (SIFRK) methods}\label{sec:SIFRK}
We shall recall the derivation of SIFRK methods, before subsequently showing they can be alternatively derived as deterministic IFRK methods under a change of variables. Consider the Stratonovich SDE \cref{eq:Stratonovich sde}, admitting the following decomposition of drift $f$ into linear $L$ and nonlinear $N$ terms:
\begin{align}
d_t u  = \underbrace{\left[ L u + N(t,u)\right]}_{f} dt + g_m(t,u) \circ dW^m(t). \label{eq:sde}
\end{align}
Where the vector-valued functions $g_m(t,u)$ are integrated against the increments of the $m$-th i.i.d Brownian motion $W^m$ in a Stratonovich sense, and summation over the repeated index $m=1,...,M$ is assumed. 
It is assumed $L$ is a linear matrix, and $N$ is a nonlinear vector valued function. SIFRK methods can be derived, by first multiplying \cref{eq:sde} by the integrating factor $e^{-Lt}$ to rearrange into the SDE
\begin{align}
d_t(e^{-Lt}u) =  e^{ -Lt} N(t,u) dt + e^{-Lt} g_m(t,u) \circ dW^m.\label{eq:SDE_IFRRK}
\end{align}

By transforming variables, one can write \cref{eq:SDE_IFRRK} as the non-autonomous SDE
\begin{align}
d_t w = H(t,w)dt + K_m(t,w) \circ dW^m,\label{eq:SDE}
\end{align}
where $w = e^{-Lt}u$, $H(t,w):= e^{-Lt}N(t,e^{Lt}w)$, $K_m(t,w) := e^{Lt}g_m(t,u)$. A one-step RK scheme (specified by the Butcher tableau $(A, b)$ and the temporal consistency conditions $c = A e$) applied to \cref{eq:SDE} gives
\begin{align}
k^{(i)}&=w_n+\Delta t \sum_{j=1}^s a_{i j} H\left( t_n+c_j \Delta t,k^{(j)}\right) + \sum_{j=1}^s {a}_{i j} K_m\left( t_n+{c}_j \Delta t,k^{(j)}\right)\Delta W^m, \quad i=1, \ldots, s,\\
w_{n+1}&=w_n+\Delta t \sum_{i=1}^s b_i H\left( t_n+c_i \Delta t,k^{(i)} \right)+\sum_{i=1}^s {b}_i K_m\left( t_n+{c}_i \Delta t,k^{(i)}\right)\Delta W^m.
\end{align}
Transforming back into the variable $u$ 
and multiplying through by $e^{L(t^n+c_i\Delta t)}$ gives the SIFRK method:

\begin{method}[SIFRK]\label{method:SIFRK}
\begin{align}
u^{(i)} &= e^{L c_i\Delta t}u^n +  \Delta t\sum_{j=1}^s a_{ij} e^{L(c_i - c_j)\Delta t} \left(  N(t^n + c_j \Delta t, u^j) + g_m(t^n + c_j \Delta t, u^{(j)}) \frac{\Delta W^m}{\Delta t} \right) , \quad i = 1,...,s,\label{eq:SIFRK1}\\
u^{n+1} &= e^{L \Delta t}u^{n} + \Delta t \sum_{i=1}^s b_i e^{L (1 - c_i) \Delta t} \left( N(t^n + c_i \Delta t, u^{(i)}) + g_m(t^n + c_i \Delta t, u^{(i)}) \frac{\Delta W^m}{\Delta t} \right). \label{eq:SIFRK2}
\end{align} 
\end{method}

It is apparent from \cref{eq:SIFRK1,eq:SIFRK2} that this class of SIFRK schemes are deterministic IFRK method under the substitution 
\begin{align}
    N\mapsto N  + \sum_{m=1}^M g_m \frac{\Delta W_m}{\Delta t},\label{eq:mapping1} 
\end{align}
and convergence follows directly from \cref{thm:onestep} since \cref{eq:mapping1} is equivalent to \begin{align}\Delta t f\mapsto \Delta t f+\sum_{m=1}^{M}g_m\Delta W^m.\end{align} 
The Butcher tableau corresponding to the RK4 scheme 
\begin{align}
(A,b) = 
\begin{array}{c|cccc}
0 & & & & \\
1 / 2 & 1 / 2 & & & \\
1 / 2 & 0 & 1 / 2 & & \\
1 & 0 & 0 & 1 & \\
\hline & 1 / 6 & 1 / 3 & 1 / 3 & 1 / 6
\end{array},
\end{align}
gives rise to the following Stochastic integrating factor Runge-Kutta scheme:
\begin{method}[IFSRK4]\label{method:IFSRK4} The IFSRK4 method is given by 
\begin{align}
u^{(1)} 
& = u_n\\
u^{(2)} 
& =\mathrm{e}^{\frac{\Delta t}{2} L} u^{n}+\frac{\Delta t}{2} \mathrm{e}^{\frac{\Delta t}{2} L} N\left(u^{(1)}\right) +\frac{1}{2} \mathrm{e}^{\frac{\Delta t}{2} L} G\left(u^{(1)}\right)\Delta S, \\
u^{(3)}
& =\mathrm{e}^{\frac{\Delta t}{2} L} u^n+\frac{\Delta t}{2} N\left(u^{(2)} \right) +\frac{1}{2} G\left(u^{(2)} \right)\Delta S, \\
u^{(4)} 
& =\mathrm{e}^{\Delta t L} u^n+\Delta t \mathrm{e}^{\frac{\Delta t}{2} L} N\left(u^{(3)}\right) + \mathrm{e}^{\frac{\Delta t}{2} L} G\left(u^{(3)}\right) \Delta S, \\
u^{n+1} 
& =\mathrm{e}^{\Delta t L} u^n+\Delta t\left(\frac{1}{6} \mathrm{e}^{\Delta t L} \mathcal{N}\left(u^{(1)}\right)+\frac{1}{3} \mathrm{e}^{\frac{\Delta t}{2} L} \mathcal{N}\left(u^{(2)}\right)+\frac{1}{3} \mathrm{e}^{\frac{\Delta t}{2} L} \mathcal{N}\left(u^{(3)}\right)+\frac{1}{6} \mathcal{N}\left(u^{(4)}\right)\right).
\end{align}
Where 
\begin{align}
\mathcal{N}(a) = N(a) +\frac{ G(a) \Delta S}{\Delta t}.
\end{align}
\end{method}
Again we present schemes in the time independent case where $N(t,u)=N(u)$, $G(u)\Delta S = g_m(u)\Delta W^m$ for presentation purposes as the time dependent case can be inferred from \cref{eq:SIFRK1,eq:SIFRK2}). Below we define two (stochastic Stratonovich modified schemes) from \cite{isherwood2018strong} below.

\begin{method}[eSSPIFSRK$^{+}(2,2)$]\label{method:eSSPIFSRK22} 
The eSSPIFSRK$^{+}(2,2)$ method is given by: \begin{align}
k^1 &= e^{L\Delta t} \left( u^n + \Delta t N(u^n) + G(u^n) \Delta S \right)\\
u^{n+1} &= \frac{1}{2} e^{L\Delta t} u^{n} + \frac{1}{2} e^{L\Delta t} \left( k^1 + \Delta t N(k^1) + G(k^1) \Delta S \right)
\end{align}
\end{method}

\begin{method}[eSSPIFSRK$^{+}(3,3)$]\label{method:eSSPIFSRK33)}
The eSSPIFSRK$^{+}(3,3)$ method is given by:
\begin{align}
k^{1}= & \frac{1}{2} e^{\frac{2}{3} \Delta t L} u^n+\frac{1}{2} e^{\frac{2}{3} \Delta t L}\left(u^n+\frac{4}{3} \Delta t N(u^n)
+\frac{4}{3} G(u^n) \Delta S \right) \\
k^{2}= & \frac{2}{3} e^{\frac{2}{3} \Delta t L} u^n+\frac{1}{3}\left(k^{1}+\frac{4}{3} \Delta t N(k^{1}) + \frac{4}{3}G(k^{1}) \Delta S\right) \\
u^{n+1}= & \frac{59}{128} e^{\Delta t L} u^n+\frac{15}{128} e^{\Delta t L}\left(u^n+\frac{4}{3} \Delta t N\left(u^n\right)+\frac{4}{3}G(u^n)\Delta S\right) +\frac{27}{64} e^{\frac{1}{3} \Delta t L}\left(k^{2}+\frac{4}{3} \Delta t N\left(k^{2}\right)  + \frac{4}{3} G(k^2)\Delta S \right) .
\end{align}
\end{method}

\subsection{Stochastic Exponential Time Differencing Runge Kutta (SETDRK) methods}\label{sec:SETDRK}

Consider the following SDE, 
\begin{align}
d_t u  = \left[ N(t,u) + L u \right] dt +  \left[ g_m(t,u)\right] \circ dW^m_t,
\end{align}

Using an integrating factor technique, one can write this in integral form between $t^n$ and $t^{n+1}$ as follows
\begin{align}
 u^{n+1}  = e^{L \Delta t} u^{n}  + e^{L \Delta t}\int_{0}^{\Delta t} e^{-L s}  N(t_n + s,u(t_n + s)) ds +  e^{L (\Delta t+t^{n})}\int_{t^{n}}^{t^{n+1}} e^{-L s}  g_m(s,u(s)) \circ dW^m_s. \label{eq:integral_Stratonovich}
\end{align}
Unlike SRK or SIFRK methods, SETDRK, do not derive analogously to the deterministic ETDRK scheme, and
to approximate \cref{eq:integral_Stratonovich}, we explicitly require the use of the transform in \cref{eq:mapping1} and \cref{thm:onestep} to use any deterministic order 2 or higher ETDRK integration method. Following \cite{hochbruck2010exponential} the general case with \cref{eq:mapping1} is given by:
\begin{method}[CSETDRK]
\begin{align}
k_{i} & =\chi_i\left(\Delta t L\right) u_n+\Delta t \sum_{j=1}^s a_{i j}\left(\Delta t L\right) \left( N\left(t_n+c_j \Delta t, k_{j}\right) + \frac{g_{m}\left(t_n+c_j \Delta t, k_{j}\right)\Delta W^{m}}{\Delta t}\right)\\
u_{n+1} & =\chi\left(\Delta t L\right) u_n+\Delta t \sum_{i=1}^s b_i\left(\Delta t L\right) \left( N\left(t_n+c_j \Delta t, k_{i}\right) + \frac{g_m(t_n+c_j\Delta t)\Delta W^m}{\Delta t}\right)
\end{align}
Here, $\chi, \chi_i, a_{i j}$, and $b_i$ are not just coefficients, but are constructed from rational expressions involving the exponential evaluated at $\left(\Delta t L \right)$. For consistency it is assumed $\chi(0)=\chi_i(0)=1$, and the underlying Runge-Kutta method is denoted by the coefficients $(a_{i,j}(0),b_j(0))$, typically assumed to satisfy $c_i = A(0)e$, $b^T(0)e = 1$. It is normally considered $\chi(z)=\mathrm{e}^z$ and $\chi_i(z)=\mathrm{e}^{c_i z}, 1 \leq i \leq s$. We refer to \cite{hochbruck2010exponential} for the general construction of deterministic order 2 and higher ETDRK methods, which require specific construction of $\chi$, $\chi_i$, $a_{i j}$, and $b_i$. 
\end{method}

\begin{remark}
    Fortunately, in light of \cref{thm:onestep}, one need not know the general construction of ETDRK methods, but simply adopt any deterministic order $p\geq 2$ method, under the above change of variables:
\end{remark}
\begin{align}
\mathcal{N}(t^n,u^n) := N\left(t_n,u_n\right) + g_m\left( t_n,u_n\right) \frac{\Delta W^m}{\Delta t}.
\end{align}

One can inspect a convenient stochastic ETDRK2 (modified from \cite{cox2002exponential}) as follows.
\begin{method}[SETDRK2]\label{method:SETDRK2} The Stochastic-ETDRK2 scheme is given by:
\begin{align}
k_1 &=  e^{L \Delta t} u^{n}  + A_1 \mathcal{N}(t^n,u^n),\\
u_{n+1} &= k_1+A_2 \left(\mathcal{N}(t_n+\Delta t, k_1) - \mathcal{N}(t_n, u_n)\right)
\end{align}
where 
\begin{align}
A_1 =  \frac{ e^{L\Delta t}-1}{L}, \quad
A_2 = \frac{\left(1+\Delta t L - e^{L \Delta t}\right) }{ (\Delta t) L^2}.
\end{align}
\Cref{thm:onestep} implies this scheme has deterministic, drift commutative, commutative, and general mean square order $(P_d,P_{dc},P_c,P_s) = (2,1,1,0.5)$, captures symetric terms in the Stratonovich-Taylor expansion to order $1$. The approximation of $A_{i}$ is discussed in \cref{sec:Numerical implementation details}
\end{method}

Similarly, one can inspect a convenient stochastic ETDRK3 (modified from \cite{iserles2009first,cox2002exponential}) 
\begin{method}[SETDRK3]\label{method:SETDRK3} The Convenient-Stochastic-ETDRK3 scheme is given by:
\begin{align}
k^1 & = u_n e^{L \Delta t / 2}+B_1 \mathcal{N}\left(t_n,u_n\right) ,  \\
k^2 &= u_n e^{L \Delta t}+B_2 \left(2 \mathcal{N}\left( t_n+\Delta t / 2,k^1\right)   -\mathcal{N}\left(t_n,u_n\right)\right),  \\
u_{n+1} &=  u_n e^{L \Delta t} + B_3\mathcal{N}\left(t_n,u_n\right) +B_4 \mathcal{N}\left(t_n+\Delta t / 2, k^1\right)  +B_5 \mathcal{N}\left(t_n+\Delta t,k^2\right) .
\end{align}

Where
\begin{align}
B_1 &= \frac{\left(e^{L \Delta t / 2}-1\right)}{L}, \quad B_2 = \frac{\left(e^{L \Delta t }-1\right)}{L},\quad 
B_3 = \frac{-4-\Delta t L+e^{L \Delta t}\left(4-3 \Delta t L+\Delta t^2 L^2\right)}{\Delta t^2 L^3}, \\
B_4 &= 4 \frac{2+\Delta t L+e^{L \Delta t}(-2+\Delta t L)}{\Delta t^2 L^3}, \quad
B_5 = \frac{-4-3 \Delta t L-\Delta t^2 L^2+e^{L \Delta t}(4-\Delta t L)}{\Delta t^2 L^3}.
\end{align}
\Cref{thm:onestep} gives deterministic, drift commutative, commutative, and general mean square order $(P_d,P_{dc},P_c,P_s) = (3,1,1,0.5)$. The approximation of $B_{i}$ is discussed in \cref{sec:Numerical implementation details}
\end{method}

Similarly one can inspect, the convenient Stratonovich extension of Cox-Mathews fourth order integration scheme originally derived in the deterministic setting by symbolic computation in \cite{cox2002exponential}. 
\begin{method}[SETDRK4]\label{method:SETDRK4} The Stochastic ETDRK4 scheme is given by:
\begin{align} a_n &= e^{L \Delta t / 2} u_n+E_0  \mathcal{N}\left( t_n,u_n\right)  , \\ 
b_n &= e^{L \Delta t / 2} u_n+E_0 \left[ \mathcal{N}\left(t_n+\Delta t / 2, a_n\right)  \right], \\ 
c_n&= e^{L \Delta t / 2} a_n+ E_0 \left[2 \mathcal{N}\left(t_n+\Delta t / 2, b_n\right) - \mathcal{N}\left(t_n,u_n\right)\right], \\ 
u_{n+1}&=  e^{L \Delta t} u_n + E_1 \mathcal{N}\left(t_n,u_n\right) +  E_2 \left[N\left(t_n+\Delta t / 2,a_n\right)+\mathcal{N}\left(t_n+\Delta t / 2,b_n\right)\right]  + E_3 \mathcal{N}\left(t_n+\Delta t,c_n \right),
\end{align}
where
\begin{align}
E_0 &:= L^{-1}\left(e^{L \Delta t / 2}-I\right), \\
E_1 &:= \Delta t^{-2} L^{-3}\left[-4-L \Delta t +e^{L \Delta t}\left(4-3 L \Delta t+(L \Delta t)^2\right)\right], \\
E_2 &:= \Delta t^{-2} L^{-3} \left[4+2L \Delta t+e^{L \Delta t}(-4+2L \Delta t)\right], \\
E_3 &:= \Delta t^{-2}L^{-3} \left[ -4-3 L \Delta t-(L \Delta t)^2+e^{L \Delta t}(4-L \Delta t)\right].
\end{align}
\end{method}
\Cref{thm:onestep} gives deterministic, drift commutative, commutative, and general mean square order $(P_d,P_{dc},P_c,P_s) = (4,2,1,0.5)$. The approximation of $E_{i}$ is discussed in \cref{sec:Numerical implementation details}

\subsubsection{Numerical implementation details}\label{sec:Numerical implementation details}
We consider the following motivating SPDE, as a prototypical nonlinear SPDE with quadratic nonlinearity, linear stiff term and nonlinear noise, 
\begin{align}
d_t u + \left( \underbrace{\frac{c_1}{2} (u^2)_x}_{Nonlinear} + \underbrace{c_0 u_x  +c_2  u_{xx} + c_3 u_{xxx} + c_4 u_{xxxx}}_{Linear} \right)dt + \underbrace{\sum_{m=1}^{M}(\xi_m(x)u)_x\circ dW^m}_{Nonlinear} = 0. \label{eq:SPDE}
\end{align}
Subcases include the Heat equation, Linear advection equation, Burger's equation, KdV equation, KS equation, under a nonlinear flux form transport noise interpreted in the Stratonovich sense. Taking the Fourier transform gives, 
\begin{align}
d \hat{u} + \underbrace{\frac{ik c_1}{2} (\widehat{u^2})}_{-N(u)} + \underbrace{(c_0 i k \hat{u} - k^2 c_2 \hat{u} - c_3 i k^3 \hat{u} + c_4 k^4 \hat{u})}_{-Lu} + \sum_{m=1}^{M}\underbrace{i k \widehat{(\xi_p u)}}_{g_m(u)}\circ dW^m = 0 \label{eq:fourier space}
\end{align}
where $k$ denotes the wave-number (scaled by $2\pi/L_x$ where $L_x$ is the length of the domain). By taking a finite dimensional approximation of \cref{eq:fourier space} one has a discrete analogue to \cref{eq:fourier space}, a finite-dimensional system of SDE's of the following form
\begin{align}
d_t  u_h  = \left[  N(\b u_h,t) + L  u_h \right] dt +    \sum_{m=1}^{M}  g_m(u_h,t) \circ dW^m_t. \label{eq:multidimensional case of interest}
\end{align}
In our work we use a pseudo-spectral discretisation
where the nonlinear terms $ N, g_m$ of quadratic type are computed by multiplication in real space, then Fourier transformed, multiplied by the wave numbers and dealiased through a $2/3$-rds rule before inverse Fourier transformed.

Due to numerical instability highlighted in \cite{higham2002accuracy} the rational expressions involving the exponential $\lbrace A_i\rbrace_{i=1,2}$, $\lbrace B_i\rbrace_{i=1,2,3,4,5}$, $\lbrace E_i\rbrace_{i=1,2,3,4}$, in \cref{method:SETDRK2}, \cref{method:SETDRK3}, \cref{method:SETDRK4} respectively, have removable singularities and require approximation, and in this work are computed before runtime following Kassam and Trefethen's, numerical quadrature of contour integration in the complex plane \cite{kassam2005fourth}. As pointed out in \cite{kassam2005fourth} the practical computation of $E_{i}$, using the following contour integration
\begin{align}
f(\b L) = \frac{1}{2\pi i}\oint_{\Gamma} f(t)(t\b I-\b L)^{-1}dt, 
\end{align}
has significant simplification when the operator diagonalises and the trapezium rule can be employed around roots of a circle in the complex plane centred at specific points relating to eigenvalues of $\b L$ \cite{kassam2005fourth}. The trapezium rule (in this complex plane context) converges exponentially fast, and is done before runtime. Eigenvalues of $\b L$ depend on the spatial discretisation, and number of spatial points used, for even diffusive operators such as the KS equation $-k^2+k^4$ eigenvalues are near the
negative real axis, and for dispersive odd-order operators like the KdV equation eigenvalues are close
to the imaginary axis $ -i k^3$, such differences can be accounted for when defining the contour integration for increased performance \cite{kassam2005fourth}.
The contour integration method applies more generally and includes operators that do not diagonalise \cite{kassam2005fourth}. 
The numerical computation of matrix exponentials and rational expressions involving exponential operators can be attained through Chebyshev approximation, diagonalisation, Padé approximation \cite{dai2023exponential}, Leja approximation, contour integration, Krylov subspace methods \cite{geiger2012exponential}, scaling and squaring \cite{chen2025parallelization}, see \cite{hochbruck2010exponential} for an overview.

\section{Numerical results}\label{sec:Numerical_results}

In this section we show \cref{thm:onestep} has practical consequences for numerical integration.

\begin{enumerate}
    \item For general non-commutative noise we show mean square order $\mathcal{O}(\Delta t ^{1/2})$, \cref{thm:onestep}-\cref{item1}.
    \item For commutative noise we show mean square order $\mathcal{O}(\Delta t^{1})$ \cref{thm:onestep}-\cref{item6}. 
    \item For drift commutative noise we show order $\mathcal{O}(\Delta t^{\operatorname{int}(p/2)})$ \cref{thm:onestep}-\cref{item3}+\cref{item4}.
    \item In the limit of small noise, large timestep $\Delta t$ and no noise we show $\mathcal{O}(\Delta t^{p})$ \cref{thm:onestep}-\cref{item5}.
\end{enumerate}

We show each of these convergence behaviors in the context of SIFRK, SETDRK, and SRK methods of orders $p=2,3,4$, and facilitate relative comparisons between all $9$ methods of different types and orders for different examples.

\subsection{Experiment 1: General Multidimensional noise, convergence and efficiency.}\label{sec:ex1}

\Cref{thm:onestep}, \cref{item1} indicates all numerical schemes presented in this paper converge to the Stratonovich system with strong order $1/2$ (for general multidimensional non-commutative noise). However, \cref{thm:onestep} \cref{item2} also indicates that all higher order symmetric terms up to $p/2$ are captured  when using a deterministic scheme of order $p$. This experiment determines to what extent both \cref{item1,item2} are observed in practice for general multidimensional non-commutative noise, by considering the following initial value problem 
\begin{align}
d_t u  +uu_x + u_{xxx} + \sum_{m=1}^{M}(\xi_m u)_x \circ dW_t= 0,\quad \xi_m(x) = \frac{\sin(2\pi x m )}{100 m},\quad u(x,0) = \exp(-50(x - 0.5)^{2}).\label{eq:KdV multipliciative noise}
\end{align}
This setup does not admit an analytic pathwise solution and the noise (plotted in \cref{fig:non-commutative basis}) is non-commutative in the infinite dimensional Fréchet operator sense $[g_i,g_j]=\left(\xi_i (\xi_{j})_{xx} - \xi_j (\xi_{i})_{xx} \right)u + ((\xi_{i})_x \xi_{j} - (\xi_{j})_x \xi_{i}) u_x\neq 0$, nor drift commutes $[f_i,g_i]\neq 0$. A high resolution solution $u_e$ is used as the reference solution and plotted in
\cref{fig:KdV_Waterfall_stochastic}. 
\begin{figure}[H]
    \centering
    \includegraphics[width=0.5\linewidth]{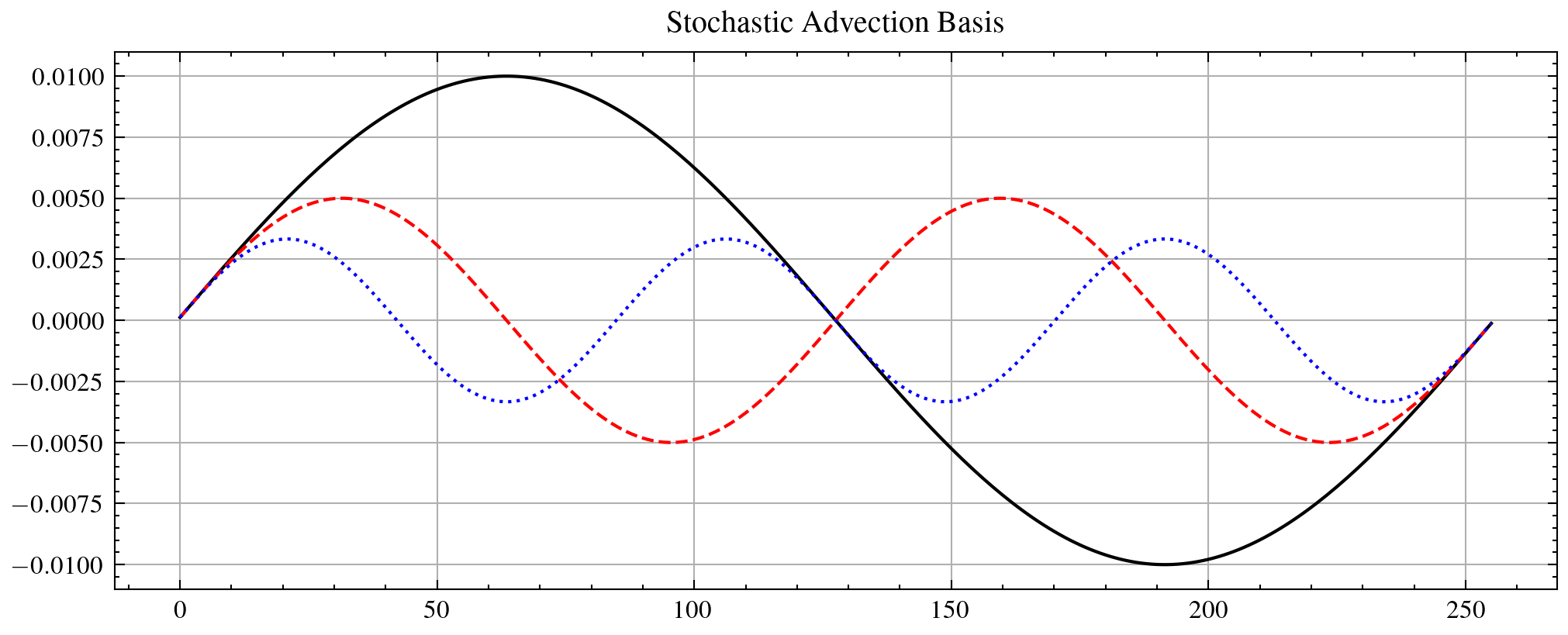}
    \caption{Non-commutative basis $\lbrace\xi_{m}\rbrace_{m=1,2,3}$}
    \label{fig:non-commutative basis}
\end{figure}

In \cref{fig:convergence non-commutative} we plot the results, in \cref{fig:noncommutative second order}, \cref{fig:noncommutative third order}, \cref{fig:noncommutative fourth order}, we respectively plot the pathwise relative L2 errors of the second order, third order, and fourth order schemes employed in this paper as compared with the high resolution reference. This is to facilitate a comparison between SRK, SIFRK and SETDRK schemes. If a figure omits a data point, the numerical method failed to converge at that temporal resolution, and this is adopted for all figures. In \cref{fig:all order non-commutative basis} all convergence results are overlayed on the same graph as to facilitate comparison between methods of different orders.

\begin{remark}
Order lines are not placed in the same position in each plot, and the y axis are different amongst figures, so \cref{fig:all order non-commutative basis} is most appropriate to compare schemes of different orders. The other figures in \cref{fig:noncommutative second order}, \cref{fig:noncommutative third order}, \cref{fig:noncommutative fourth order}, are for comparing SETDRK, SIFRK and SRK methods of equal order. 
\end{remark}

\begin{figure}[H]
\centering
\begin{subfigure}{.495\textwidth}
    \centering
\includegraphics[width=.95\linewidth]{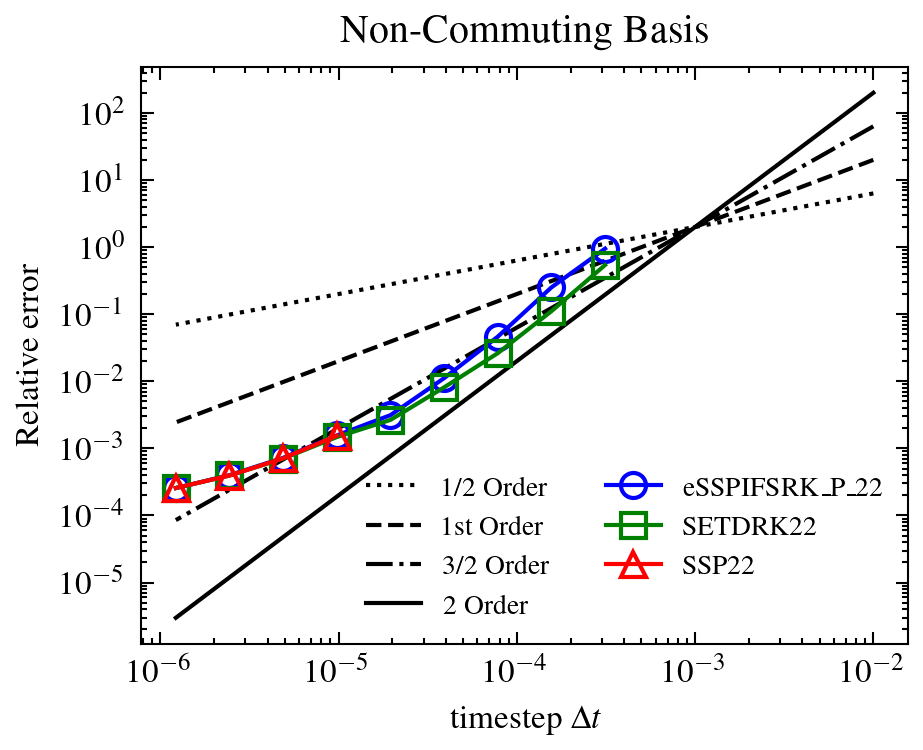}
\caption{Comparison of 2nd order methods non-commuting basis.}
\label{fig:noncommutative second order}
\end{subfigure}
\begin{subfigure}{.495\textwidth}
\centering
\includegraphics[width=.95\linewidth]{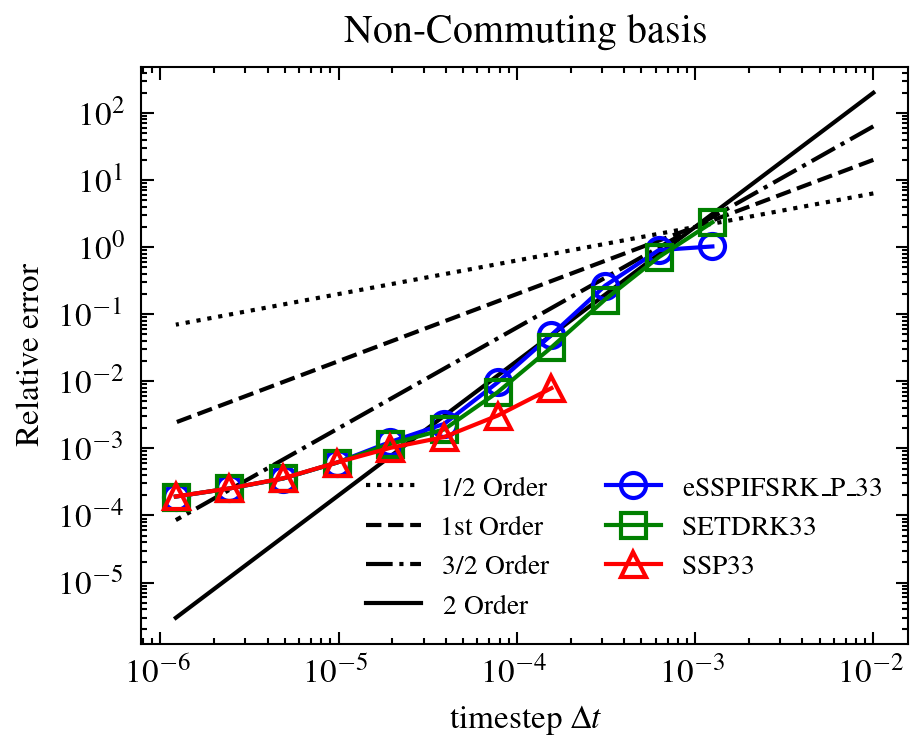}\caption{Comparison of 3-rd order methods non-commuting basis.}
\label{fig:noncommutative third order}
\end{subfigure}\\
\begin{subfigure}{.495\textwidth}
    \centering
\includegraphics[width=.95\linewidth]{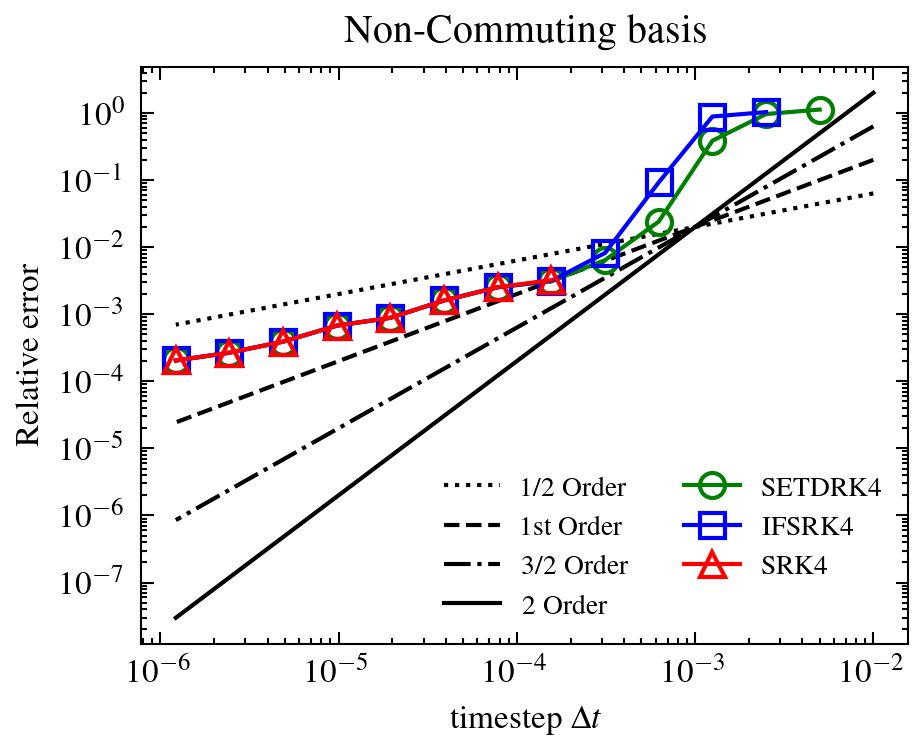}
\caption{Comparison of 4th order methods non-commutative basis.}
\label{fig:noncommutative fourth order}
\end{subfigure}
\begin{subfigure}{.495\textwidth}
\centering
\includegraphics[width=.95\linewidth]{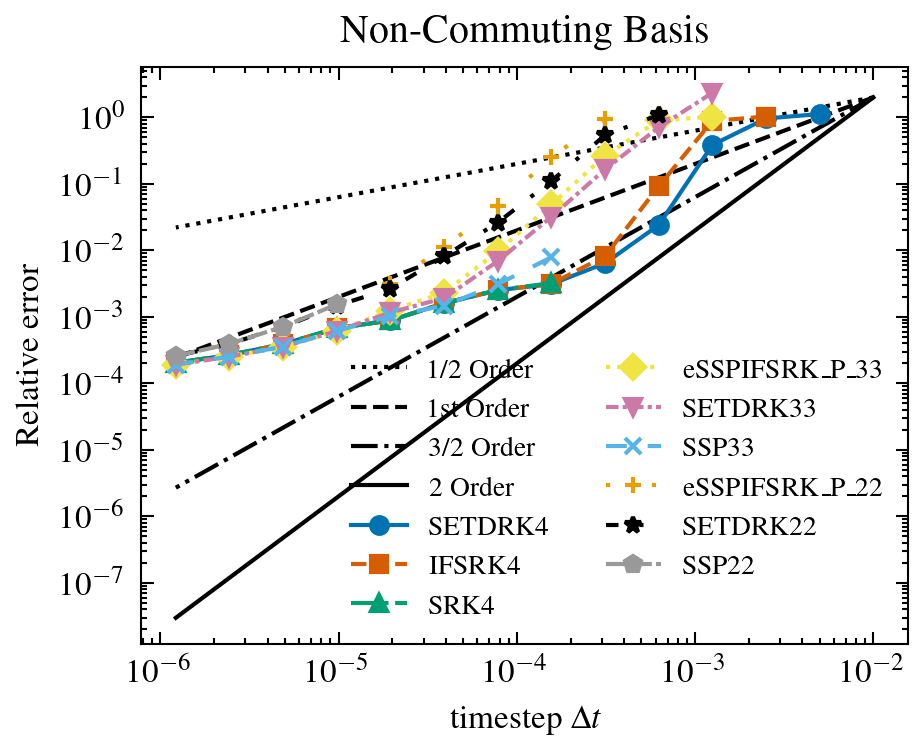}\caption{Comparison of all methods non-commutative basis.}
\label{fig:all order non-commutative basis}
\end{subfigure}
\caption{Temporal convergence for non-commutative noise. In all schemes we see that for non-commutative noise, we attain strong order $1/2$ in the limit of $\Delta t\rightarrow 0$ in \cref{fig:all order non-commutative basis}. SETDTK and SIFRK methods could be computed at timesteps larger than RK methods. SETDTK had errors slightly lower than SIFRK methods. In \cref{fig:all order non-commutative basis} when using larger timesteps there was significant practical merit in using higher order schemes.  }
\label{fig:convergence non-commutative}
\end{figure}


As shown in \cref{fig:noncommutative second order} SETDRK22 and IFSRK22 can take timesteps 32 times larger than SSPRK22. Errors are slightly lower in SETDRK22 than IFSRK22. As shown in \cref{fig:noncommutative third order} SETDRK33 and SIFRK33 can take timesteps 8 times larger than SSP33. As shown in \cref{fig:noncommutative third order} SETDRK4 and IFRK4 can take timesteps 32, 16 times larger than RK4 respectively. SETDRK4 has smaller error than IFRK4 at large timesteps. \newline

In \cref{fig:all order non-commutative basis} all schemes are $\mathcal{O}(\Delta t^{1/2})$ in the limit of small $\Delta t$ in accordance with \cref{thm:onestep} \cref{item1}. Indicating decreasing benefit in using increasingly higher order Runge-Kutta, SIFRK and SETDRK approaches in the limit of small noise due to the dominant missing Lévy area approximation. However, as simultaneously shown in \cref{fig:all order non-commutative basis} when using larger and intermediate timesteps, there was significant practical merit in using a higher-order SRK, SETDRK or SIFRK approach in terms of absolute error. 

We run the same convergence experiment and at each temporal resolution we recompute the entire simulation 5 times and take an average for a CPU-time (per resolution per method). We then plot a relative error vs CPU time graphs in \cref{fig:non commuting time second order methods}, \cref{fig:non commuting time third order methods}, \cref{fig:non commuting time third order methods} and \cref{fig:time all order methods}. 
Figures \cref{fig:non commuting time second order methods}, \cref{fig:non commuting time third order methods}, \cref{fig:non commuting time third order methods}  present CPU-time versus relative error using second, third, and fourth-order schemes, respectively. \Cref{fig:time all order methods} collates the performance of all methods. Ideally, a method has small error for a given computational cost,  if one scheme is below another scheme it is deemed more computationally efficient than another for a given CPU cost. This experiment takes into account the exponential operator cost and the cost associated with using more stages.

\begin{figure}[H]
\centering
\begin{subfigure}{.495\textwidth}
    \centering
\includegraphics[width=.95\linewidth]{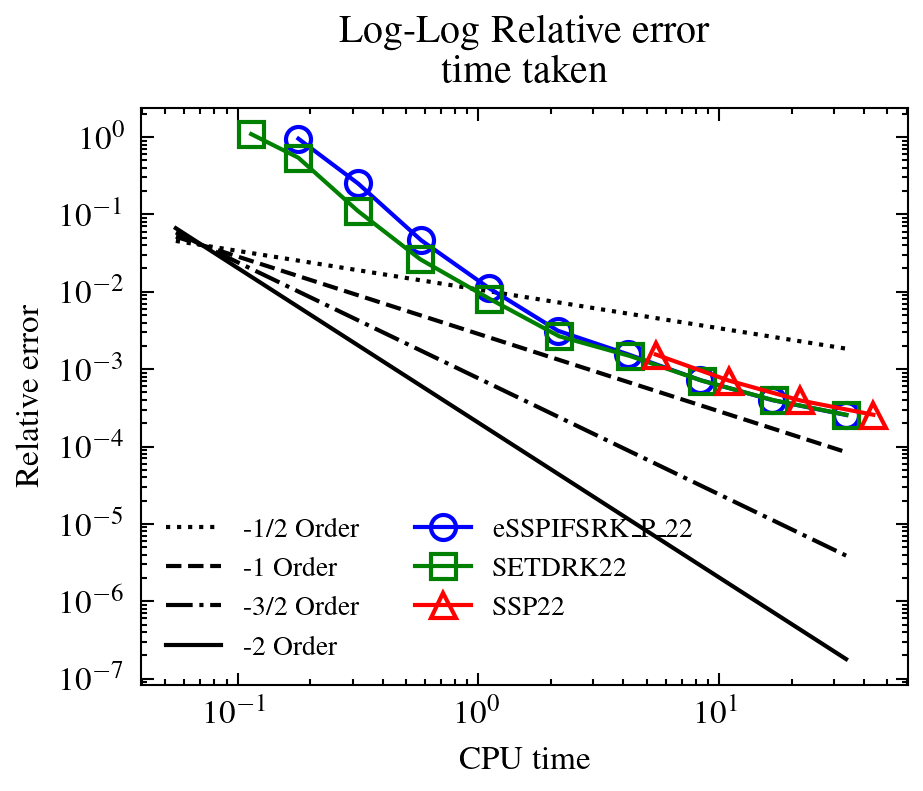}
\caption{Non-commuting comparison of 2nd order methods CPU-time vs Relative error.}
\label{fig:non commuting time second order methods}
\end{subfigure}
\begin{subfigure}{.495\textwidth}
\centering
\includegraphics[width=.95\linewidth]{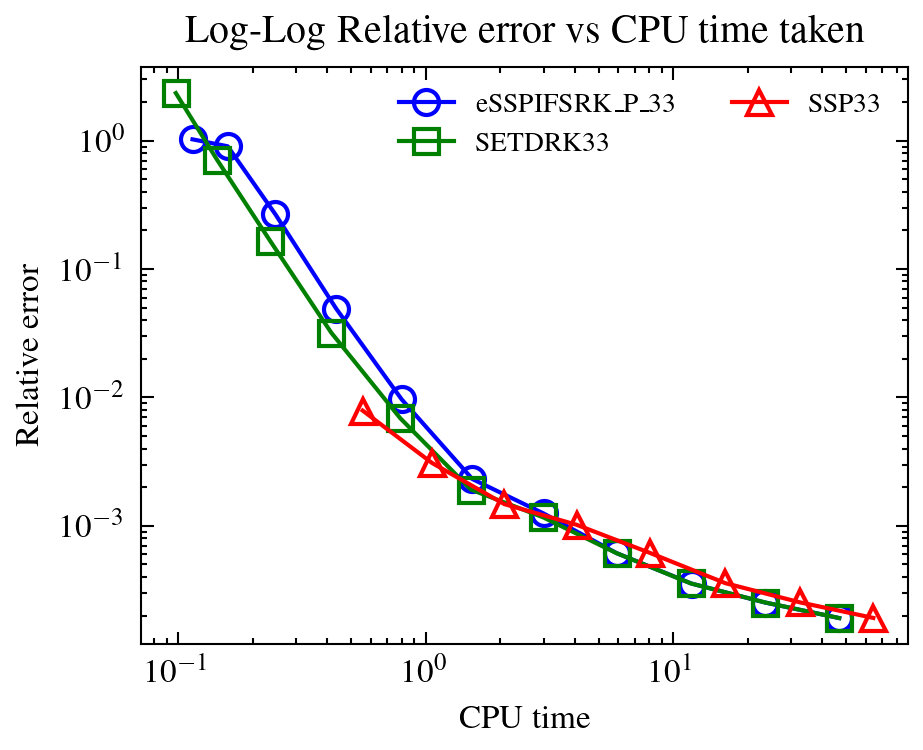}\caption{Non-commuting comparison of 3-rd order methods CPU-time vs Relative error.}
\label{fig:non commuting time third order methods}
\end{subfigure}\\
\begin{subfigure}{.495\textwidth}
    \centering
\includegraphics[width=.95\linewidth]{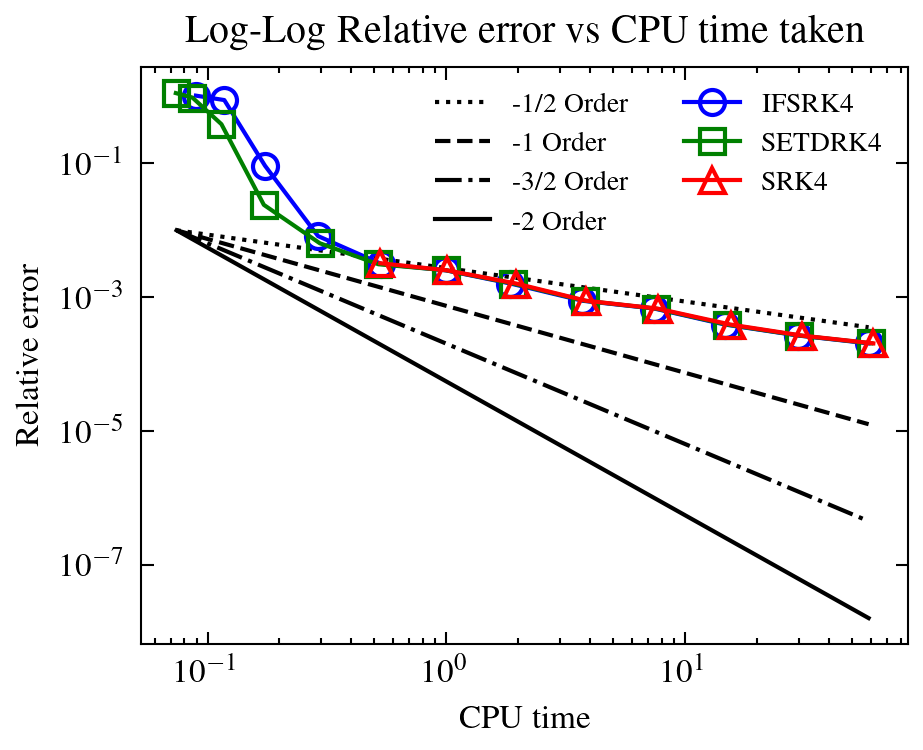}
\caption{Non-commuting comparison of 4th order methods CPU-time vs Relative error.}
\label{fig:non commuting time fourth order methods}
\end{subfigure}
\begin{subfigure}{.495\textwidth}
\centering
\includegraphics[width=.95\linewidth]{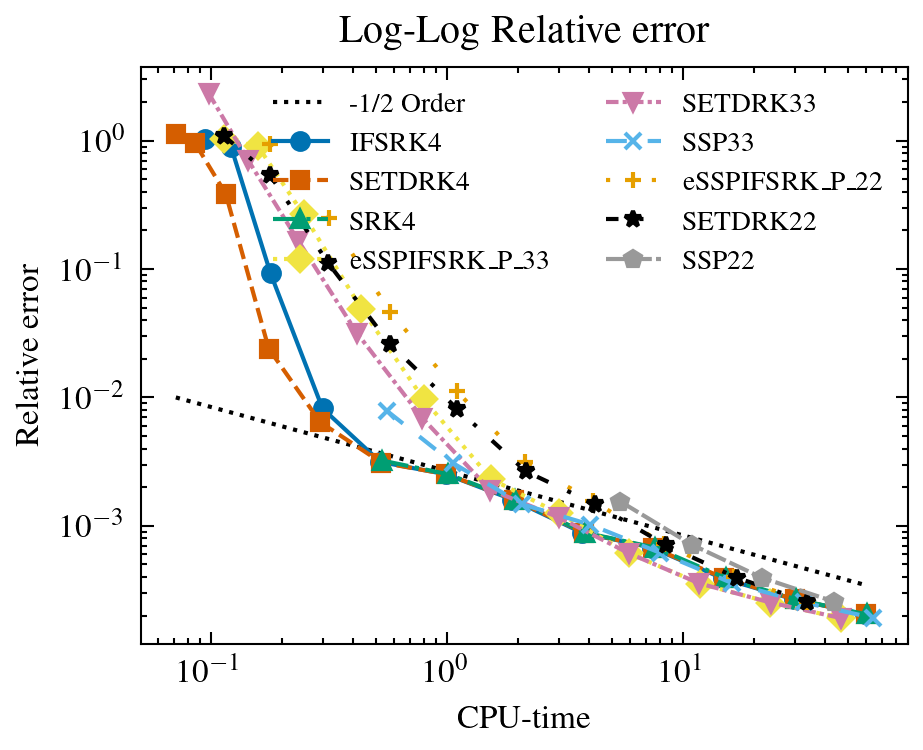}\caption{Non-Commuting comparison of all methods CPU-time vs Relative error.}
\label{fig:non commuting time all order methods}
\end{subfigure}
\caption{CPU-time vs Relative error }
\label{fig:time all}
\end{figure}

\Cref{fig:non commuting time second order methods} demonstrates that among the three second-order schemes tested, SETDRK22 provides the best accuracy for a given CPU time. Its curve lies below that of SSP22 and eSSPIFSRK$^{+}22$ across the entire range. \Cref{fig:non commuting time third order methods}, demonstrates that amongst the three third-order schemes tested, SETDRK33 provides the best accuracy for a given CPU time. Its curve lies below that of eSSPIFSRK$^{+}33$ and SIFRK33 for the majority of CPU times; however SSP33 showed slight computational advantage at intermediate CPU times. \Cref{fig:non commuting time fourth order methods}, demonstrates that amongst the three fourth-order schemes tested (IFSRK4, SETDRK4, and SRK4) the CPU-ERROR plot collapse almost exactly onto a single curve (when using small timesteps). Indicating similar efficiency of all schemes when running at small timesteps.
In the same figure \cref{fig:non commuting time fourth order methods} SETDRK4 was more cost efficient than IFSRK4 and SRK4 in some small CPU time regimes (corresponding to using large timesteps). 
Overall \cref{fig:non commuting time second order methods,fig:non commuting time third order methods,fig:non commuting time fourth order methods} demonstrate that the computational overhead of exponential-type operators was justifiable, and the ETDRK schemes were overall more accurate per CPU time.

\Cref{fig:non commuting time all order methods} places all 9 methods on the same plot and scale. SETDRK4 is more accurate than SSP22 at all CPU times investigated. This takes into account the cost of using exponentials and more stages. In regimes corresponding to taking increasingly small timesteps (corresponding to the bottom right corner in \Cref{fig:non commuting time all order methods}) the advantage to using a higher order scheme is increasingly diminished for non-commutative noise when the stage cost is taken into account. Here eSSPIFSRK$^{+}$33 emerges as being more computationally efficient than SETDRK4. One could speculate that when using extremely small timesteps (smaller than $10^{-6}$) the trend observed in \cref{fig:non commuting time all order methods} would continue and second order methods such as SSP22 would eventually be more cost effective than ETDRK4 in the stochastic context since they have the same strong order but require fewer stages. However, in practice many applications require modest timesteps and for our examples the higher methods were overall the most cost effective, even when stage cost and exponentials were taken into account and the noise is noncommutative. In some regimes, fourth order ETDRK schemes deliver an order of magnitude smaller error than second-order and third order methods for the same CPU time. It is also clear that the CPU overhead of using exponential operators can be mitigated enough to be as efficient or more efficient than a SRK method, whilst giving the possibility of larger timesteps. CPU-time depends on implementation, software, hardware, we ran all experiments on a M2-mac using python with @jax.jit in JAX.

\subsection{Experiment 2: Commutative noise}\label{sec:ex2}

We consider the same IVP as \cref{eq:KdV multipliciative noise}, but define a $C^{\infty}(\mathbb{T})$ compactly supported basis of noise as follows: 
\begin{align}
\xi_j(x) = 
\mathbb{I}_{\{|x-c_j| < w/2\}} \;\exp\Bigg[-\frac{1}{1 - \big(\frac{2(x-c_j)}{w}\big)^2}\Bigg],\quad c_j = x_{\min} + j w,
\quad j = 1,\dots,M,
 \quad w = \frac{x_{\max}-x_{\min}}{M+1}, \quad M=3.
\end{align}
plotted in \cref{fig:non-commutative basis}
\begin{figure}[H]
    \centering
\includegraphics[width=0.5\linewidth]{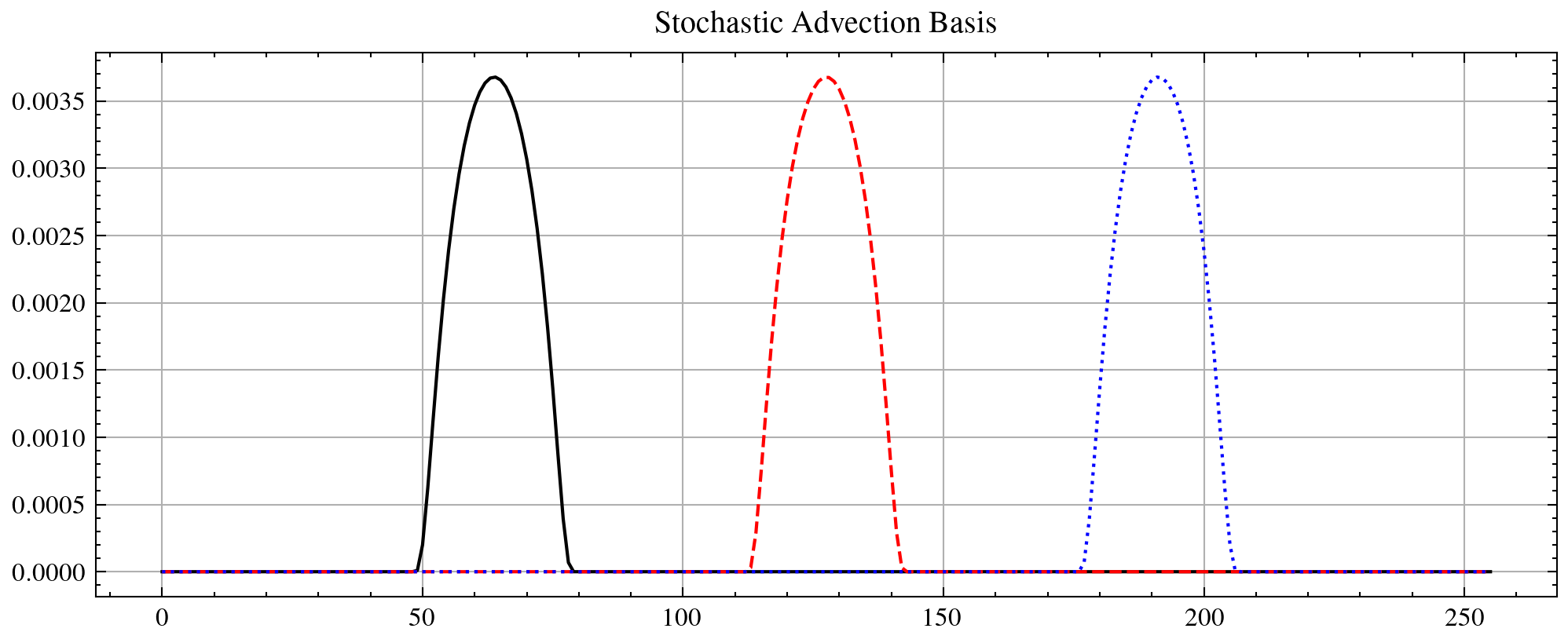}
    \caption{Commutative basis $\lbrace\xi_{m}\rbrace_{m=1,2,3}$.}
    \label{fig:Commutative basis}
\end{figure}
This setup does not admit an analytic path-wise solution. The noise commutes by compactness in the Fréchet operator sense $[g_i,g_j]= [(\xi_i u)_x,(\xi_j u)_x] = 0$ when $i\neq j$ but does not drift commute $[f,g_j]\neq 0$, therefore \cref{thm:onestep}-\cref{item6} implies strong order 1 for all schemes. A high realization solution $u_e$ is used as the reference solution. In figures \cref{fig:second order methods com,fig:third order methods com,fig: fourth order methods com}, we plot the convergence for the second order, third order, fourth order schemes, in \cref{fig:comparison of all com} all 9 methods relative error are plotted on the same graph.

\begin{figure}[H]
\centering
\begin{subfigure}{.495\textwidth}
    \centering
\includegraphics[width=.95\linewidth]{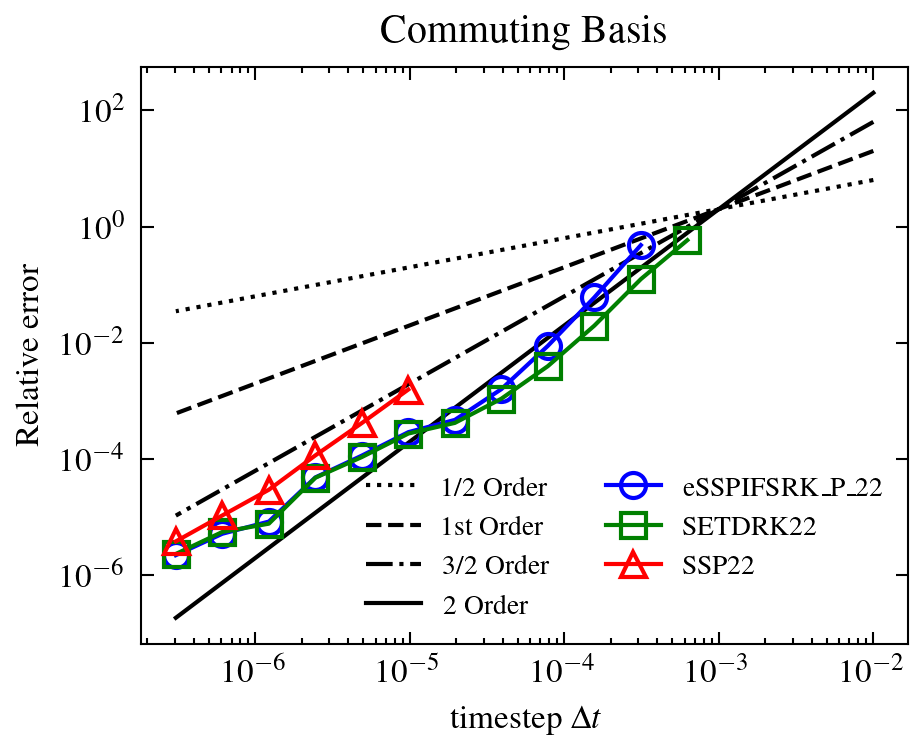}
\caption{Comparison of 2nd order methods commutative noise.}
\label{fig:second order methods com}
\end{subfigure}
\begin{subfigure}{.495\textwidth}
\centering
\includegraphics[width=.95\linewidth]{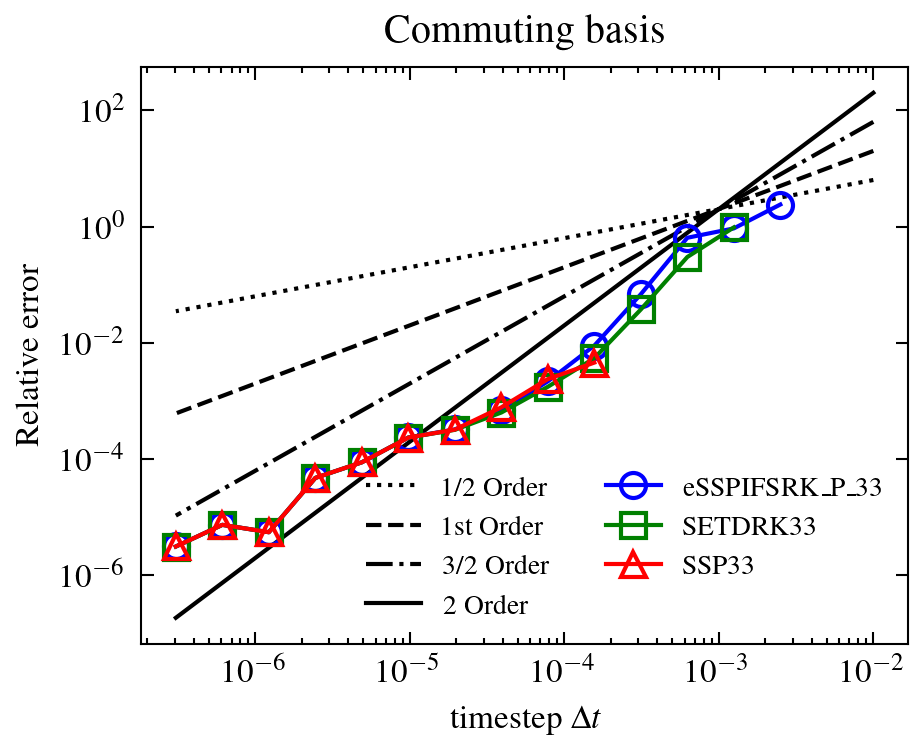}\caption{Comparison of 3rd order methods commutative noise.}
\label{fig:third order methods com}
\end{subfigure}\\
\begin{subfigure}{.495\textwidth}
    \centering
\includegraphics[width=.95\linewidth]{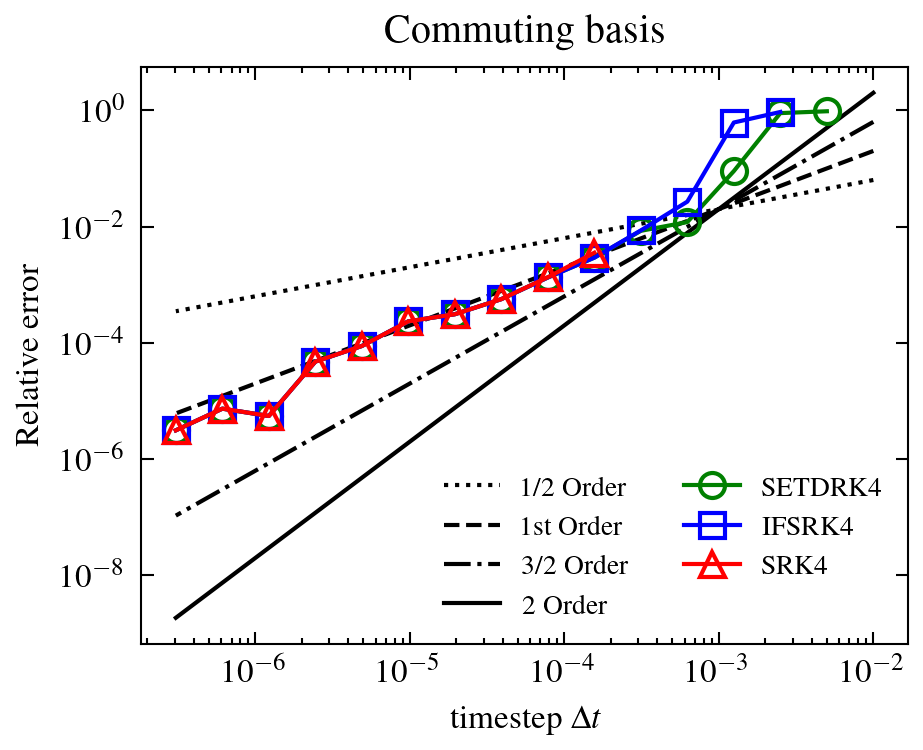}
\caption{Comparison of 4th order methods commutative noise.}
\label{fig: fourth order methods com}
\end{subfigure}
\begin{subfigure}{.495\textwidth}
\centering
\includegraphics[width=.95\linewidth]{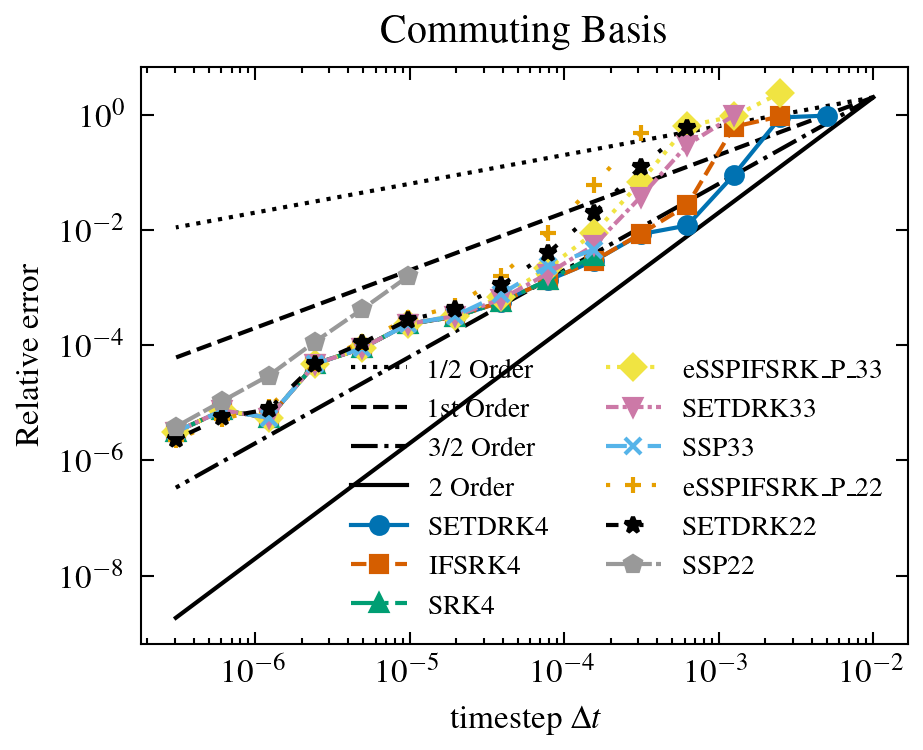}
\caption{Comparison of all methods commutative noise.}
\label{fig:comparison of all com}
\end{subfigure}
\caption{Commutative noise convergence order $1$ is verified \cref{thm:onestep}-\cref{item6}.}
\end{figure}
 Although \cref{fig:second order methods com,fig:third order methods com} appear to have better convergence than $\mathcal{O}(\Delta t^1)$ observed in \cref{fig: fourth order methods com}, this is because they have not reached the true order $\mathcal{O}(\Delta t^1)$ of convergence yet as demonstrated in \cref{fig:comparison of all com}.

ETDRK typically outperform IFRK methods and these outperform SRK methods in terms of not only accuracy but ability to run at larger timesteps, this was true independent of the order, but most apparent for the second order methods in \cref{fig:second order methods com}. In terms of accuracy, 4th outperforms 3rd, which outperforms 2nd, as is clear in \cref{fig:comparison of all com}, but the increase in accuracy becomes increasingly diminished in the limit of small timesteps. We verify \cref{thm:onestep}-\cref{item6}, $\mathcal{O}(\Delta t^1)$, but simultaneously that higher-order schemes are more accurate, as a consequence of \cref{item5,item2}

\subsection{Experiment 3: Drift Commutative noise.}\label{sec:ex3}

\Cref{thm:onestep} \cref{item3} and \cref{item4} indicates one may attain higher-order convergence for drift commutative noise, but due to the abstract form of the integration method, when $P$ is odd, the FTMSC was not sufficient to prove order $P/2$, but $P/2-1/2$ instead. The practical implications of such a theorem will be investigated numerically.

To test the theoretical order of the scheme, we use stochastic travelling wave solutions to the following stochastic KdV equation under constant advection noise. 
\begin{align}
d_t u  +uu_x + u_{xxx} + a u_x \circ dW_t= 0, \quad a\in \mathbb{R}.\label{eq:KdV constant noise}
\end{align}

By transforming into the stochastic moving frame $X(t,x):=x-aW(t)$. One (\cref{sec: stochastic travelling wave solutions to the KdV equation}) attains the analytic pathwise stochastic travelling wave solutions to the KdV equation
\begin{align}
u(x,t) = 3\beta \operatorname{sech}^2\left(\frac{\sqrt{\beta}}{2}(x-\beta t - aW(t)\right).\label{eq:stochastic travelling wave}
\end{align}

This particular setup not only admits analytic pathwise solutions, but also satisfies an operator version of the drift commutativity condition. Denote the drift and diffusion operators by 
\begin{align}
    F(u) = (u^2/2)_x + u_{xxx},\quad G(u) = a u_{x}.
\end{align}
The Fréchet derivatives with respect to $u$ in direction $v$ are
\begin{align}
DG(u)[v] = a v_x, \quad DF(u)[v] = (uv)_x + v_{xxx}.
\end{align}
These can be used to verify the infinite dimensional generalisation of the drift commutativity condition considered in \cref{thm:onestep}-\cref{item4,item3} as follows:
\begin{align}
[F,G] = DG(u)[F(u)] - DF(u)[G(u)]:= a((u^2/2)_{xx} + u_{xxxx}) - (uau_x)_x - a u_{xxxx}=0. 
\end{align}

In order to verify the pathwise (strong) convergence of the scheme
we compute a sequence of Brownian paths of decreasing resolution \cite{lord2014introduction}, and compare the relative L2 error of the numerical method in comparison to the analytic solution.  

We use the following parameters: 
$\Delta t = 10^{-4} (1/2^{i}),$ $i\in \lbrace 1,...,10 \rbrace$, 
$t_{max}=0.1$, $a=1$,
$n_{x}=256$, 
$\xi = 1$, 
$u_0 = 3 \beta \operatorname{sech}^{2} ( \frac{\sqrt{\beta}}{2} x )$ where $\beta = 8^2$
and compare the relative spacetime $L^2$ error as between the numerical solution and the analytic solution generated using \cref{eq:stochastic travelling wave}. In \cref{fig:travelling wave solutions} the analytic and numerical solution are plotted at the highest resolution, in addition to the convergence plot on the right in the same figure. 
\begin{figure}[H]
\centering
\begin{subfigure}{.295\textwidth}
    \centering
\includegraphics[width=.95\linewidth]{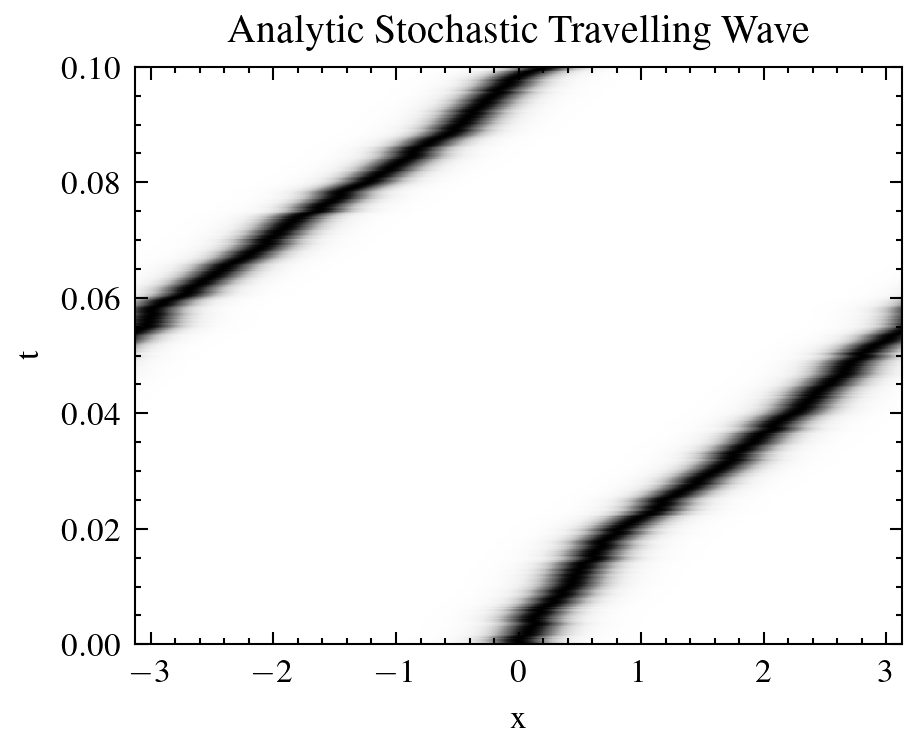}
\label{fig:Analytic travelling wave solution}
\end{subfigure}
\begin{subfigure}{.295\textwidth}
\centering
\includegraphics[width=.95\linewidth]{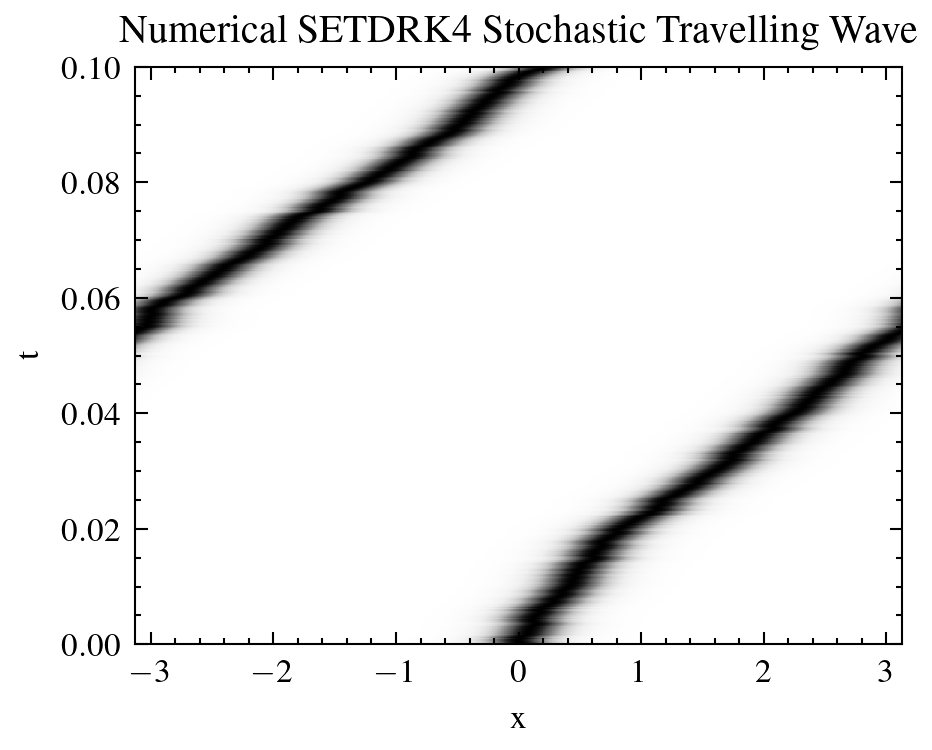}
\label{fig:Numerical travelling wave solution}
\end{subfigure}
\begin{subfigure}{.295\textwidth}
\centering
\includegraphics[width=.95\linewidth]{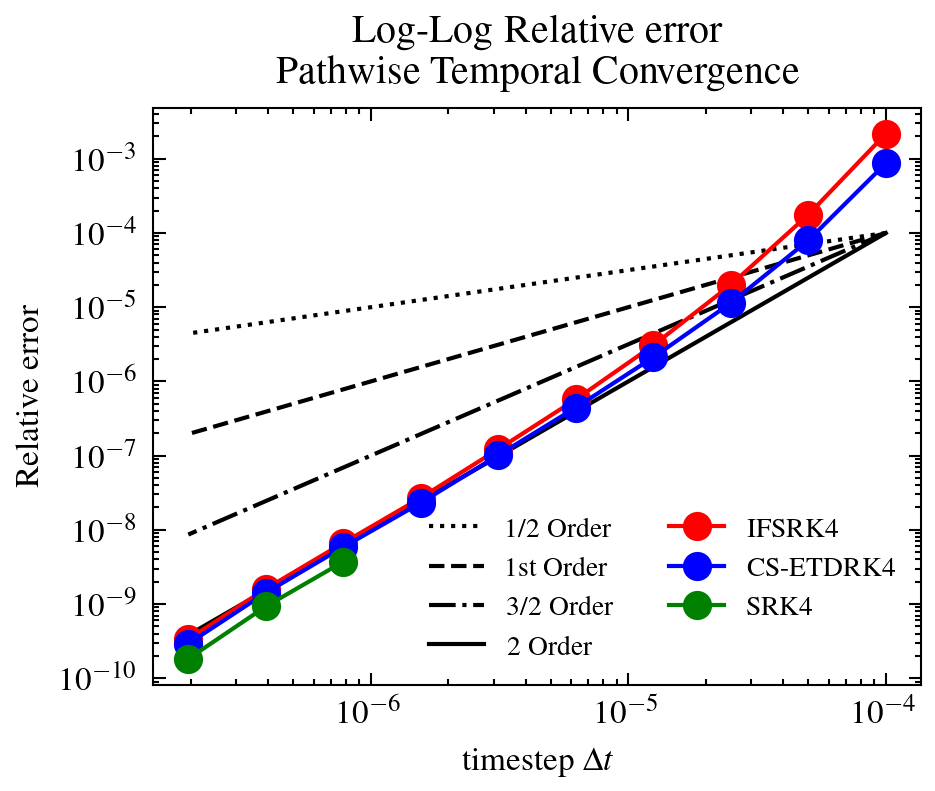}
\label{fig:convergence}
\end{subfigure}
\caption{Analytic stochastic travelling wave solution to the KdV equation (left) and the numerical solution when using dealiased spectral method in space, and the SETDRK4 scheme in time (middle). Temporal convergence (relative L2 spacetime error) of some 4-th order methods, are plotted  (right). }
\label{fig:travelling wave solutions}
\end{figure}

\Cref{fig:travelling wave solutions} verifies, all 4-th order schemes (SRK4, IFSRK4 and ETDRK4) attain the drift commuting mean square(strong) order $\mathcal{O}(\Delta t^{2})$ consistent with \cref{thm:onestep}-\cref{item3}. In \cref{fig:travelling wave solutions} the SETDRK4 scheme and the IFSRK4 scheme can run at timesteps at least $2^{7}$ times larger than the SRK4 scheme. In \cref{fig:travelling wave solutions} the SETDRK4 scheme is more accurate than the SIFRK4 scheme at all timesteps.

\subsection{Experiment 3a: Drift Commutative noise continued.}\label{sec:ex3a}

In this example we plot relative error for all 9 timestepping schemes over a wider range of possible timesteps. If a scheme is omitted from the plot at any timestep value, then the numerical scheme could not run stably at that temporal resolution. 
We use the following parameters: 
$\Delta t = 10^{-3} (1/2^{i}),$ $i\in \lbrace 1,...,14 \rbrace$, 
$t_{max}=0.01$, 
$n_{x}=256$, 
$\xi = 1$, 
$u_0 = 3 \beta \operatorname{sech}^{2} ( \frac{\sqrt{\beta}}{2} x )$ where $\beta = 9^2$, and use the final timestep to calculate the relative $L^2$ error from the analytic solution. In \cref{fig:second and third order methods} we plot the convergence of the second order methods. In \cref{fig:third order methods} we plot the convergence of the third order methods. In \cref{fig: fourth order methods} we plot the convergence of the fourth order methods. In \cref{fig:comparison of all} we overlay convergence results for all schemes onto one graph. 

\begin{figure}[H]
\centering
\begin{subfigure}{.495\textwidth}
    \centering
\includegraphics[width=.95\linewidth]{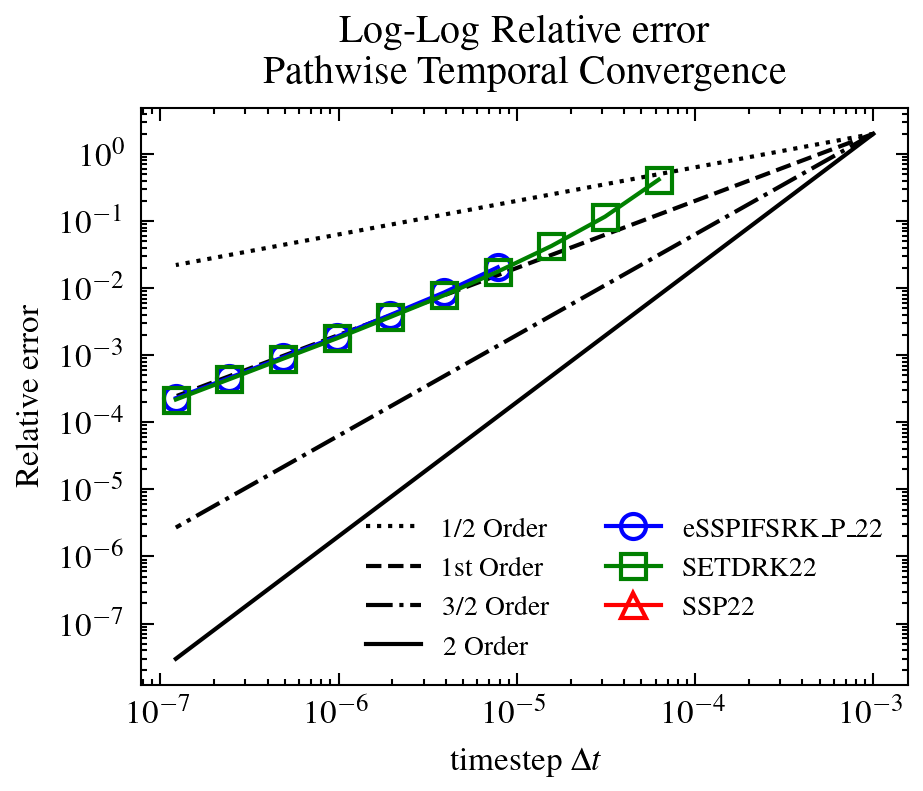}
\caption{Comparison of 2nd order methods, drift commutative noise.}
\label{fig:second order methods}
\end{subfigure}
\begin{subfigure}{.495\textwidth}
\centering
\includegraphics[width=.95\linewidth]{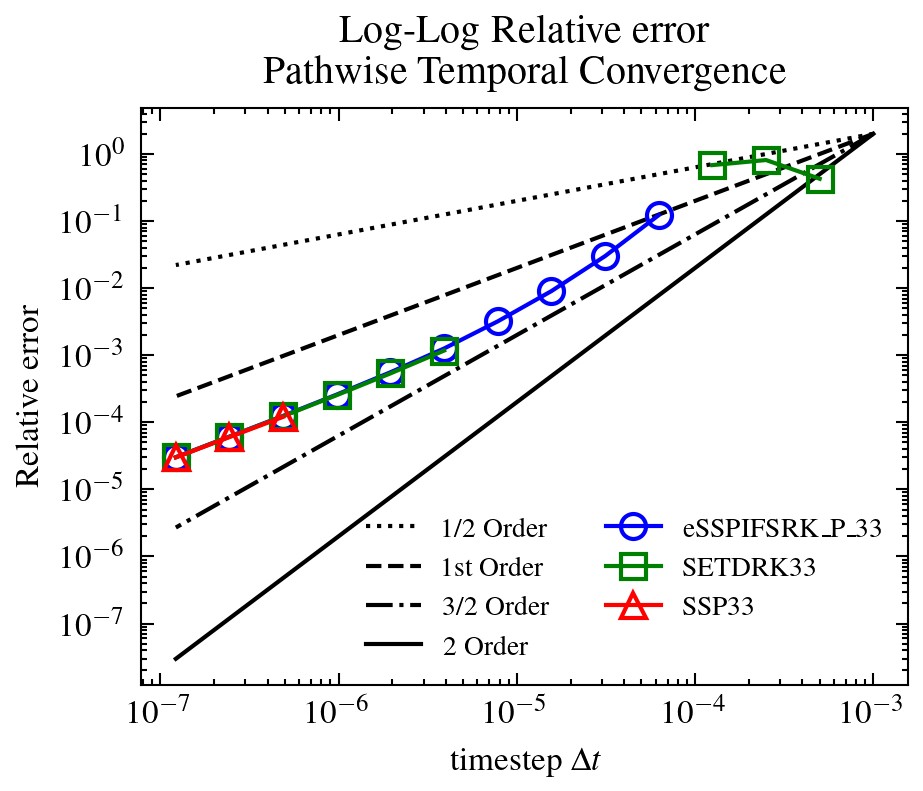}\caption{Comparison of 3-rd order methods drift commutative noise.}
\label{fig:third order methods}
\end{subfigure}\\
\begin{subfigure}{.495\textwidth}
    \centering
\includegraphics[width=.95\linewidth]{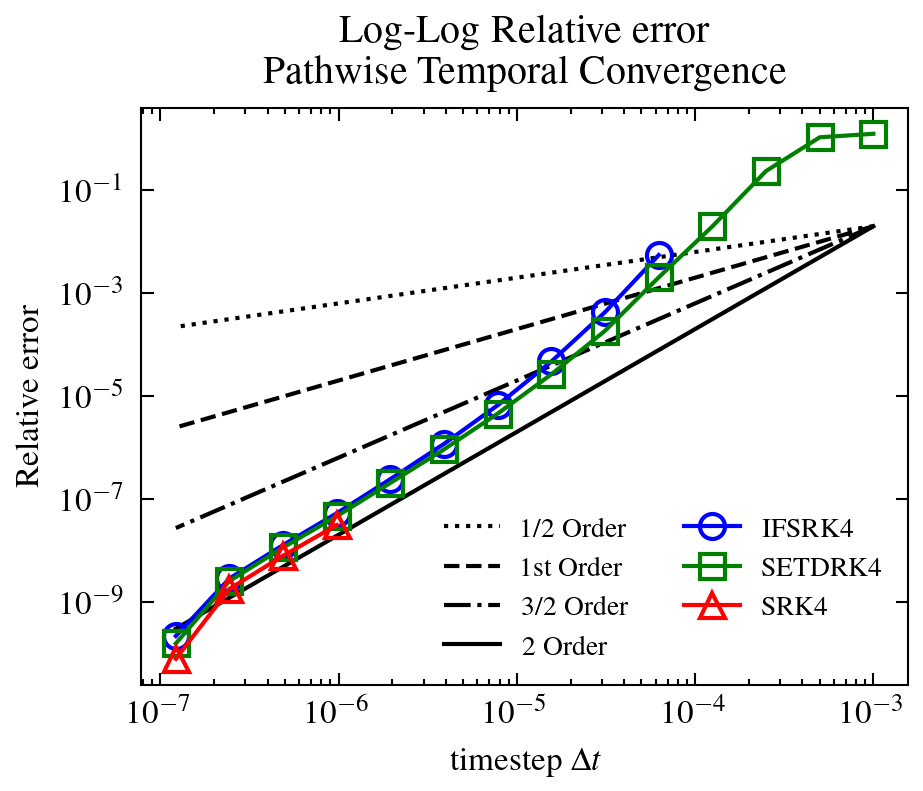}
\caption{Comparison of 4-th order methods drift commutative noise.}
\label{fig: fourth order methods}
\end{subfigure}
\begin{subfigure}{.495\textwidth}
\centering
\includegraphics[width=.95\linewidth]{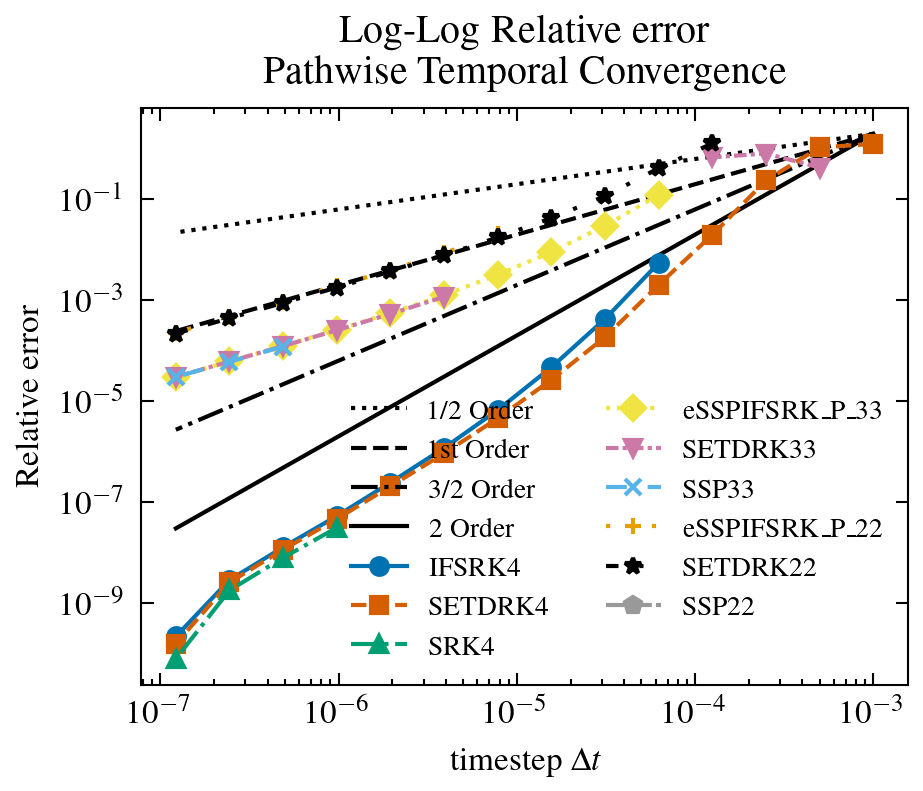}
\caption{Comparison of all methods drift commutative noise.}
\label{fig:comparison of all}
\end{subfigure}
\caption{}
\label{fig:second and third order methods}
\end{figure}

In \cref{fig:comparison of all}, we see that \Cref{thm:onestep}-\cref{item3} and \Cref{thm:onestep}-\cref{item4} are observed in practice for SETDRK, SIFRK and SRK time integration schemes. 
\Cref{thm:onestep}-\cref{item3} predicts that for the second order methods $\mathcal{O}(\Delta t^1)$ convergence and for the fourth order methods $\mathcal{O}(\Delta t^2)$ convergence. \Cref{thm:onestep}-\cref{item4} predicts that for the third order methods $\mathcal{O}(\Delta t^1)$ convergence.

In \cref{fig:second order methods} stochastic SSP22 did not converge for any timestep, eSSPIFSRK22 converged with strong order $\mathcal{O}(\Delta t^1)$, SETDRK22 also converged with strong order $\mathcal{O}(\Delta t^1)$ and was marginally more accurate than eSSPIFRK22 and ran at a timestep 8 times as large.

In \cref{fig:third order methods} SSP33, SETDRK33 and eSSPIFSRK33 converged with strong order $\mathcal{O}(\Delta t^1)$ as theoretically predicted in \cref{thm:onestep}-\cref{item4}. The SSP33 scheme was only stable for small timesteps. SETDRK and eSSPIFSRK methods were capable of larger timesteps than the SSP33 method. As shown in \cref{fig:third order methods} eSSPIFSRK33 was capable of running at timesteps SETDRK33 could not and vice versa.

In \cref{fig: fourth order methods} SRK4, SETDRK4 and SIFRK4 all converged with strong order $\mathcal{O}(\Delta t^2)$ for small timesteps, as theoretically predicted in \cref{item3} in \cref{thm:onestep}.  In \cref{fig: fourth order methods} at the few points SRK4 converged it was the most accurate, SETDRK4 was more accurate than the IFRK4 scheme at all timesteps. IFRK4 was capable of timesteps over $2^{6}$ larger than SRK4. SETDRK4 was capable of timesteps $2^{10}$ larger than the SRK4. In \cref{fig: fourth order methods} the SETDRK4 schemes was calculated as having strong order 3.6 at large timesteps but order $2$ at small timesteps.

In \cref{fig:comparison of all} we overlay all the plots on the same figure. We see significant practical improvements in accuracy by using deterministic fourth order accurate schemes as opposed to deterministic second order ones or third order ones, for instance SETDRK4 attained similar error to eSSPIFSRK33 and SETDRK3 at a timestep $2^7$ larger. 

We run the convergence tests at each temporal instance 5 times and average for a the CPU-time. The CPU-time is plotted on the x-axis and the relative error is plotted on the y-axis. In \cref{fig:time all}, we observe the $\operatorname{int}(P/2)$ order behaviour even when taking into account the CPU-time. This indicates that the increase in accuracy is not lost when taking into account the stage cost and the exponential evaluation cost. 

\begin{figure}[H]
\centering
\begin{subfigure}{.495\textwidth}
    \centering
\includegraphics[width=.95\linewidth]{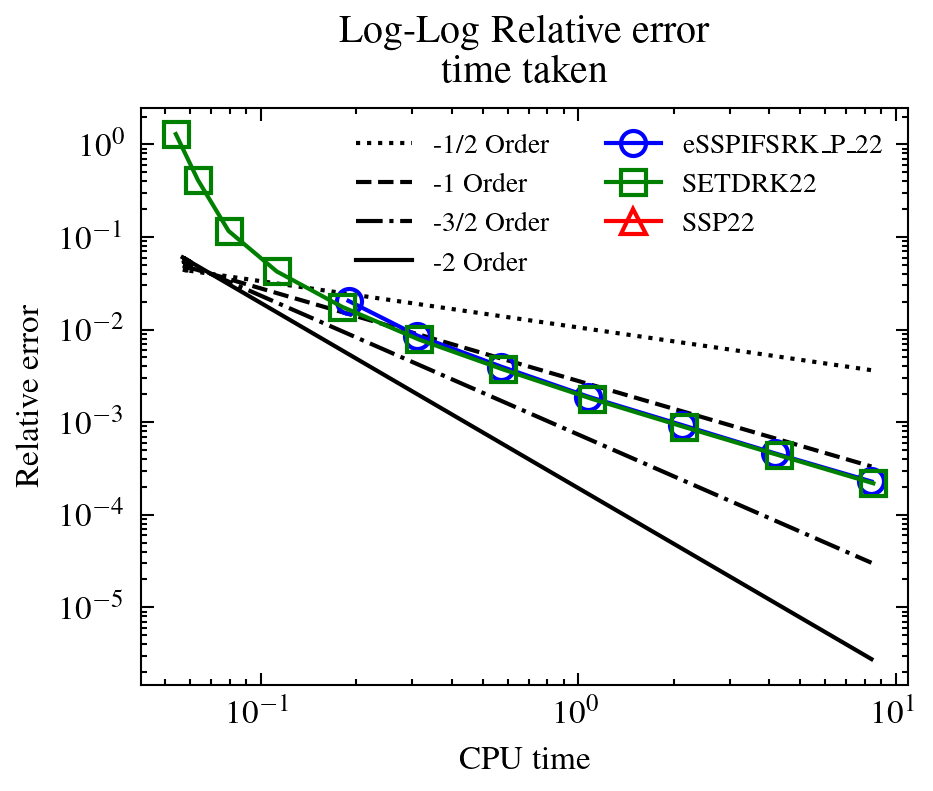}
\caption{Comparison of 2nd order methods CPU-time vs Relative error.}
\label{fig:time second order methods}
\end{subfigure}
\begin{subfigure}{.495\textwidth}
\centering
\includegraphics[width=.95\linewidth]{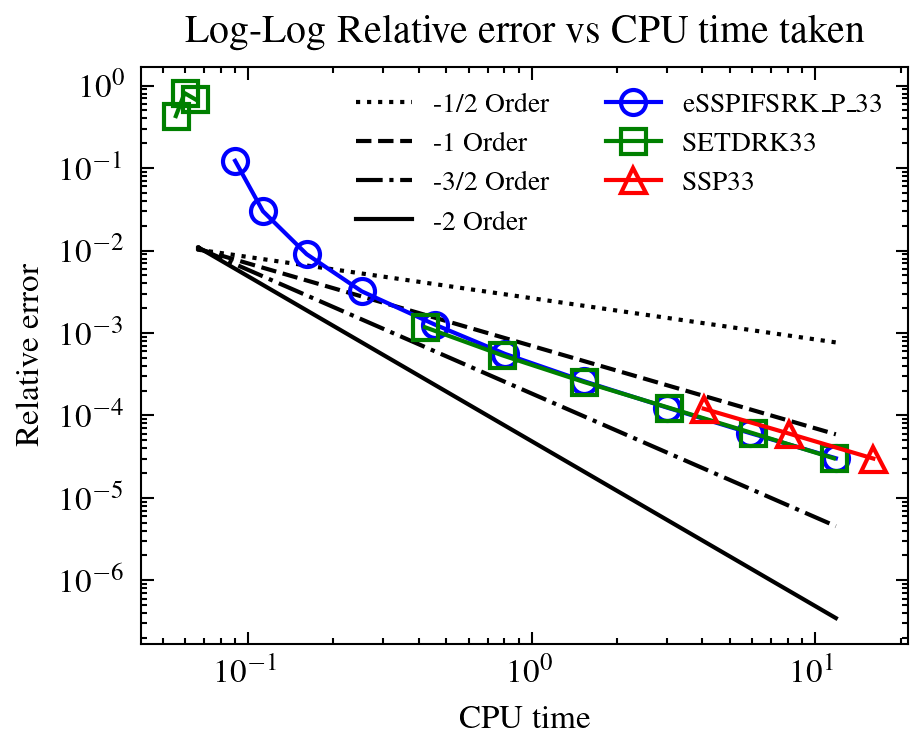}\caption{Comparison of 3-rd order methods CPU-time vs Relative error.}
\label{fig:time third order methods}
\end{subfigure}\\
\begin{subfigure}{.495\textwidth}
    \centering
\includegraphics[width=.95\linewidth]{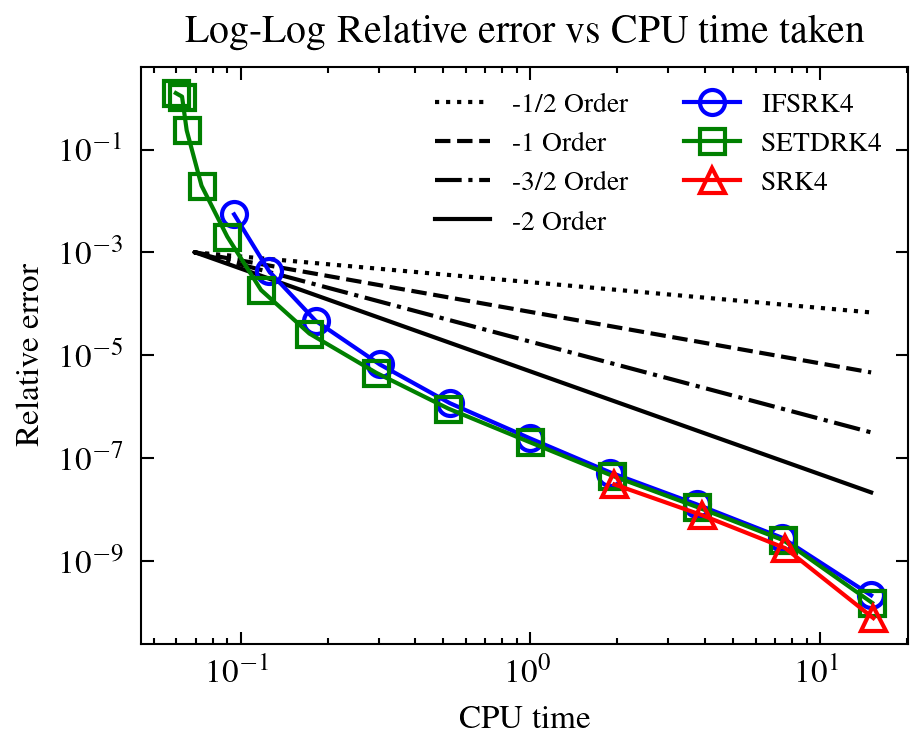}
\caption{Comparison of 4th order methods CPU-time vs Relative error.}
\label{fig:time fourth order methods}
\end{subfigure}
\begin{subfigure}{.495\textwidth}
\centering
\includegraphics[width=.95\linewidth]{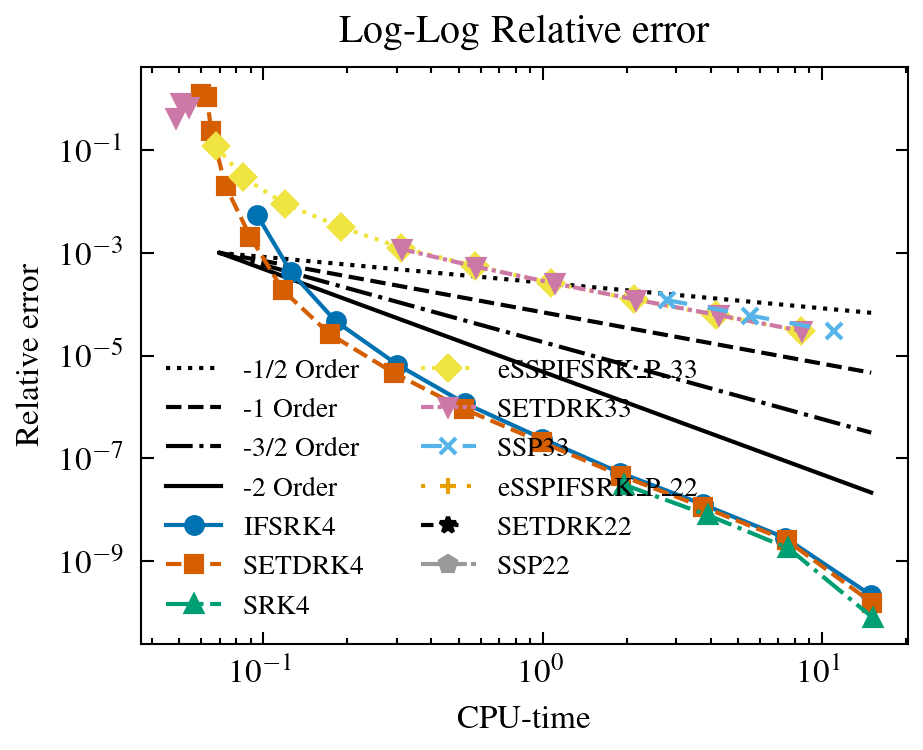}\caption{Comparison of all methods CPU-time vs Relative error.}
\label{fig:time all order methods}
\end{subfigure}
\caption{CPU-time vs Relative error, this experiment demonstrates that the ``increase" in computational cost (by using more stages or exponentials) does not offset the increased performance in terms of decreasing error. By plotting CPU time instead of temporal resolution on the x-axis, cost is taken into account. We observe theoretically predicted rates of relative error decrease, consistent with assuming that the CPU-cost of stages scales linearly and the cost of the exponential operator is done before runtime, i.e. we attain $\mathcal{O}(1)$, $\mathcal{O}(1)$, $\mathcal{O}(2)$, for deterministic order $2,3,4$ schemes predicted by \cref{thm:onestep}. }
\label{fig:time all}
\end{figure}

\subsection{Experiment 4: Decreasing noise magnitude and large timestep order.}\label{sec:ex4}

We solve the SETDRK4 scheme under the drift commutative setup but with decreasing noise magnitudes $\xi = a = \lbrace 1,1/2,1/4,1/8 \rbrace$, the results are plotted in \cref{fig:decreasing noise magintude} where we observe deterministic order $4$, emerging in the limit $\xi = a \rightarrow 0$ as expected from \cref{item5}, in \cref{thm:onestep}. 

In \cref{fig:Large timesteps}, we replot \cref{fig: fourth order methods} and plot a 4-th order gradient where we observe that at large timesteps the deterministic term in the SDE dominates the stochastic term (i.e. $f\Delta t >> g \Delta W$), such that when taking large timesteps deterministic convergence order can play a role in the error. This is observed in \cref{fig:Large timesteps} particularly when employing ETDRK or IFRK schemes which can take larger timesteps. As shown in the previous convergence plots (\cref{fig:all order non-commutative basis}) high order deterministic convergence occurs at large timesteps independent of whether the noise is commutative or drift commutative further exemplifying the importance of capturing higher order deterministic dynamical terms (\cref{item5} from \cref{thm:onestep}).

\begin{figure}[H]
\centering
\begin{subfigure}{.495\textwidth}
    \centering
\includegraphics[width=.95\linewidth]{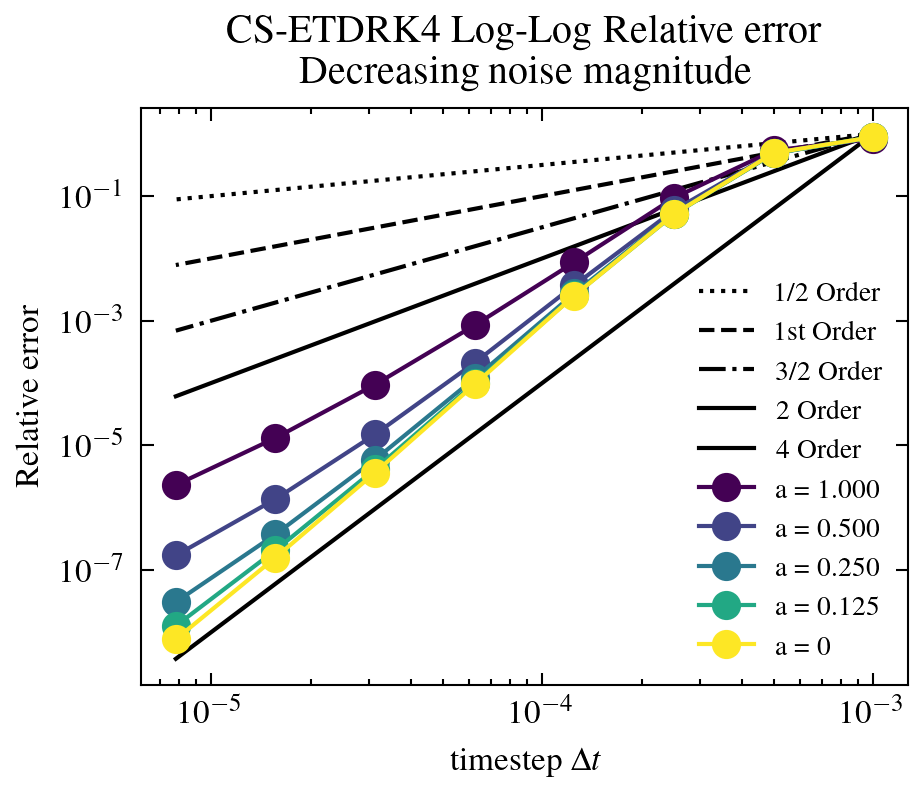}
\caption{Decreasing noise magintude}
\label{fig:decreasing noise magintude}
\end{subfigure}
\begin{subfigure}{.495\textwidth}
\centering
\includegraphics[width=.95\linewidth]{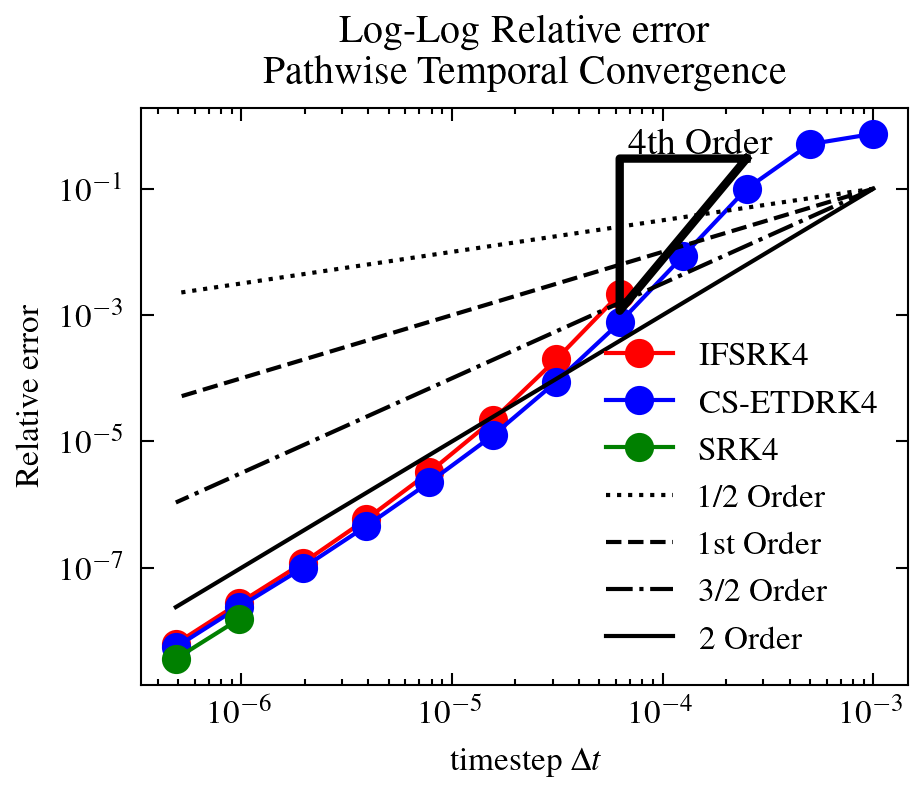}
\caption{Large timesteps}
\label{fig:Large timesteps}
\end{subfigure}
\caption{Higher order convergence assiciated with small noise or large timesteps. }
\label{fig:small_noise_and_large_timesteps}
\end{figure}

\subsection{Experiment 5: 2D Incompressible Navier Stokes Equation.}\label{sec:ex5}

This example simply indicates SETDRK methods are adoptable to higher-dimensional fluid-dynamics applications, and may be helpful to avoid timestep restrictions associated with linear stiff operators. The 2D incompressible Navier-Stokes equation in vorticity formulation under a stochastic perturbation to the momentum in the Lie-Poisson bracket takes the following form:
\begin{align}
d q + \underbrace{J\left( \psi , q dt + \sum_{m=1}^{M}\xi_m(x,y) \circ dW\right)}_{N+g_m \circ dW} = \underbrace{\mu \Delta q - \nu \Delta^2 q}_{-Lu}, \quad J(a,b):= a_x b_y - b_x a_y, \quad  \psi = -\Delta^{-1}q , \quad u = -\nabla^{\perp}\psi. \label{eq:NS}
\end{align}
In the absence of viscosity and hyperviscosity $(\mu,\nu)=(0,0)$ this type of stochastic perturbation (originally appearing more generally in \cite{holm2021stochastic} under the acronym SFLT) preserves energy but also the first integral of vorticity since $J(a,b) = \lbrace a,b\rbrace = \operatorname{div}(-[\nabla^{\perp}a]b)$ and
\begin{align}
d_t \left(\frac{1}{2}\int_{\Omega} u^2 d^2 x\right) &= d_t \left(\frac{1}{2}\int_{\Omega} -\psi q d^2 x\right) =  - \frac{1}{2}\int J(\psi^2,q dt+ \sum_{m=1}^{M}\xi_m \circ dW ) = 0,\quad d_t \left(\frac{1}{2}\int_{\Omega} q d^2 x\right) =0.
\end{align}
 
Therefore, with diffusion and hyperdiffusion the above stochastic perturbations do not change the usual global 2D deterministic energy balance of the NS equation and the stochastic system obeys:
\begin{align}
    d_t \frac{1}{2}||u||_{2}^2 = - \mu||\nabla u||_{2}^2-  \nu||\Delta u||_{2}^2\leq 0.
\end{align}
But allows spread per ensemble member. Viscosity dissipates energy, and hyper-viscosity controls unresolved small scales; however, unless $\mu,\nu$ are adjusted carefully to be small and proportional to the grid, both diffusion and hyper-diffusion can result in additional CFL restrictions for explicit schemes. Under a flux-form de-aliased Fourier spatial discretisation of \cref{eq:NS}, one identifies non-linear contributions $N,g_{m}$ and linear contributions $Lu$ which can be treated with SETDRK4 avoiding additional CFL restrictions by treating diffusion and hyperdiffusion exponentially. We employ the Kassam-Trefethen \cite{kassam2005fourth} contour integration in the complex plane for $E_{i}$ and the SETDRK4 \cref{method:SETDRK4}, equispaced snapshots of the solution for $t\in[0,80]$ are plotted in \cref{fig:NS-SFLT} where the stochastic perturbation destabilises the deterministic symmetric flow pattern, whilst keeping ensemble members with the same energy and integral of vorticity dissipation as the deterministic system. This example indicates SETDRK with contour integration is adaptable to higher dimensional problems. 

\begin{figure}[H]
\centering
\includegraphics[width=1\linewidth]{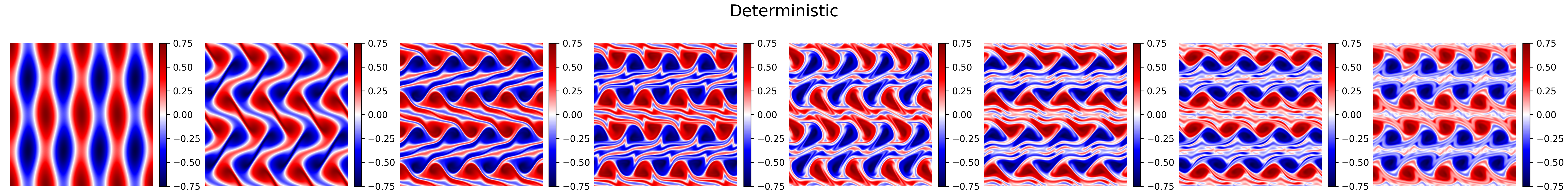}\\
\includegraphics[width=1\linewidth]{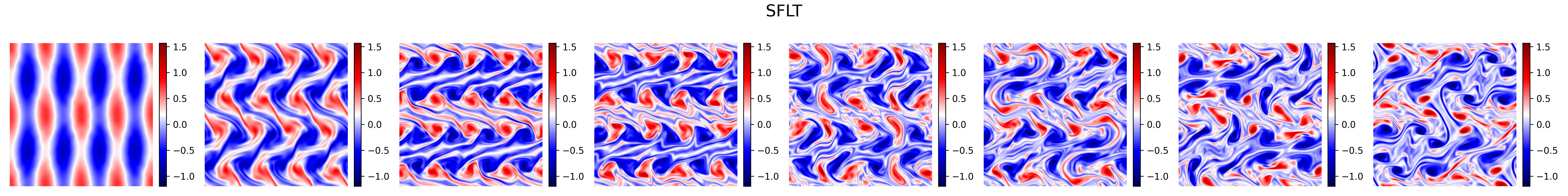}
\caption{Comparison between Deterministic NS and one realisation of a SFLT perturbed NS ensemble.}
\label{fig:NS-SFLT}
\end{figure}

\section{Conclusion}\label{sec:conclusion}

Theoretically we prove and numerically demonstrate that deterministic IFRK–ETDRK–RK integrators can be modified to converge to a Stratonovich SDE. For general non-commutative noise, the methods achieve mean-square order $\mathcal{O}(\Delta t^{1/2})$ (\cref{thm:onestep}-\cref{item1}). In the commutative noise case, the order improves to $\mathcal{O}(\Delta t^{1})$ (\cref{thm:onestep}-\cref{item6}), while in the presence of drift commutativity the order further increases to $\mathcal{O}(\Delta t^{\operatorname{int}(p/2)})$ (\cref{thm:onestep}-\cref{item3}–\cref{item4}). Finally, in the limit of small noise, large timestep $\Delta t$, or absence of noise, deterministic order $\mathcal{O}(\Delta t^{p})$ is recovered (\cref{thm:onestep}-\cref{item5}). 

In comparing SETDRK, SIFRK, and SRK schemes of the same deterministic order, we found that SETDRK and SIFRK methods were able to take timesteps typically orders of magnitude larger than those permitted by SRK methods. Among these approaches, SETDRK methods were generally more accurate than SIFRK methods. Furthermore, the additional overhead associated with exponential operator evaluations in both SETDRK and SIFRK methods was negligible, since these terms were precomputed prior to runtime, making their cost justifiable when compared to SRK methods of the same order.

In comparing SRK, SIFRK, and SETDRK methods of different deterministic orders, our experiments demonstrate that higher-order methods provide clear benefits at large timesteps, for small noise amplitudes, and under commutativity conditions. For non-commutative noise where second-order methods could be theoretically expected to be more cost-effective in the limit $\Delta t \to 0$ due to the dominant missing Lévy area term and reduced stage cost, we numerically observed higher-order methods achieving better accuracy per unit CPU cost. These results indicate that higher-order SRK, SIFRK, and SETDRK schemes remain competitive and effective in practical applications where timesteps cannot be taken arbitrarily small, or where the noise is small. 

Several open directions follow from this work. The development of generalised additive SGARK, SIFGARK, and SETDGARK schemes, allowing separate treatment of drift and diffusion within the generalised additive Runge–Kutta framework \cite{sandu2015generalized,ruemelin1982numerical}, is a promising avenue for improving efficiency. Particularly, detailed studies using splitting methods could be used to investigate the importance of capturing the symmetric terms in the Stratonovich-Taylor expansion as opposed to high order deterministic terms. Extensions of strong stability preserving (SSP) theory to stochastic integrating factor methods (SSP-SIFRK) and to stochastic exponential time-differencing Runge–Kutta methods (SSP-SETDRK) remain to be established following \cite{isherwood2018strong,qin2025energy,woodfield2024strong}. Stochastic analogues of A-stability \cite{kloeden2011numerical} have not yet been developed for some of the methods introduced including the SETDRK4 scheme. The consequences of stage order reduction, well understood for deterministic ETDRK methods \cite{strehmel1987b,krogstad2005generalized,hochbruck2010exponential}, requires investigation in the stochastic context.

While our convergence analysis focused on the stochastic Korteweg–de Vries equation, the methodology extends naturally to other stiff SPDEs, including the Kuramoto–Sivashinsky, Heat, Burgers, Navier–Stokes, Complex Ginzburg–Landau, and Quasi-geostrophic equations under stochastic transport and forcing. Providing an opportunity for easily implementable stochastic integration for ensemble forecasting and uncertainty quantification methodology.

\section{Data availability}
The figures, data, and code used in the creation of this document can be found in the following library \url{https://github.com/jameswoodfield/Particle_Filter}.

\section*{Acknowledgements}

Grateful to R. Hu, for valuable comments leading to the improvement of this manuscript, and enjoyable discussions with D. Crisan, D. Holm, O. Street, M. Singh and E. Fausti. Both J. Woodfield and A. Lobbe have been supported during the present work by the European Research Council (ERC) Synergy grant ``Stochastic Transport in Upper Ocean Dynamics" (STUOD) -- DLV-856408.

\bibliography{References.bib}

\begin{thebibliography}{10}

\bibitem{ahmat2021compact}
M.~Ahmat and J.~Qiu.
\newblock {Compact ETDRK scheme for nonlinear dispersive wave equations}.
\newblock {\em Computational and Applied Mathematics}, 40(8):286, 2021.

\bibitem{anton2018exponential}
R.~Anton and D.~Cohen.
\newblock {Exponential integrators for stochastic Schr{\"o}dinger equations
  driven by It{\^o} noise}.
\newblock {\em Journal of Computational Mathematics}, pages 276--309, 2018.

\bibitem{baudoin2004introduction}
F.~Baudoin.
\newblock {\em {An introduction to the geometry of stochastic flows}}.
\newblock World Scientific, 2004.

\bibitem{becker2016exponential}
S.~Becker, A.~Jentzen, and P.~E. Kloeden.
\newblock {An exponential Wagner--Platen type scheme for SPDEs}.
\newblock {\em SIAM Journal on Numerical Analysis}, 54(4):2389--2426, 2016.

\bibitem{besse2017high}
C.~Besse, G.~Dujardin, and I.~Lacroix-Violet.
\newblock {High Order Exponential Integrators for Nonlinear Schrödinger
  Equations with Application to Rotating Bose--Einstein Condensates}.
\newblock {\em SIAM Journal on Numerical Analysis}, 55(3):1387--1411, 2017.

\bibitem{beylkin1998new}
G.~Beylkin, J.~M. Keiser, and L.~Vozovoi.
\newblock {A new class of time discretization schemes for the solution of
  nonlinear PDEs}.
\newblock {\em Journal of computational physics}, 147(2):362--387, 1998.

\bibitem{bhatt2017structure}
A.~Bhatt and B.~E. Moore.
\newblock {Structure-preserving exponential Runge--Kutta methods}.
\newblock {\em SIAM Journal on Scientific Computing}, 39(2):A593--A612, 2017.

\bibitem{bhatt2016compact}
H.~P. Bhatt and A.~Khaliq.
\newblock {A compact fourth-order L-stable scheme for reaction--diffusion
  systems with nonsmooth data}.
\newblock {\em Journal of Computational and Applied Mathematics}, 299:176--193,
  2016.

\bibitem{burrage1998general}
K.~Burrage and P.~Burrage.
\newblock General order conditions for stochastic runge-kutta methods for both
  commuting and non-commuting stochastic ordinary differential equation
  systems.
\newblock {\em Applied Numerical Mathematics}, 28(2-4):161--177, 1998.

\bibitem{burrage1996high}
K.~Burrage and P.~M. Burrage.
\newblock High strong order explicit runge-kutta methods for stochastic
  ordinary differential equations.
\newblock {\em Applied Numerical Mathematics}, 22(1-3):81--101, 1996.

\bibitem{burrage2000order}
K.~Burrage and P.~M. Burrage.
\newblock {Order conditions of stochastic Runge--Kutta methods by B-series}.
\newblock {\em SIAM Journal on Numerical Analysis}, 38(5):1626--1646, 2000.

\bibitem{castell1993asymptotic}
F.~Castell.
\newblock {Asymptotic expansion of stochastic flows}.
\newblock {\em Probability theory and related fields}, 96(2):225--239, 1993.

\bibitem{chen1957integration}
K.-T. Chen.
\newblock {Integration of paths, geometric invariants and a generalized
  Baker-Hausdorff formula}.
\newblock {\em Annals of Mathematics}, 65(1):163--178, 1957.

\bibitem{chen2025parallelization}
M.-H. Chen, Y.~Wu, X.~Feng, and S.~Sun.
\newblock {Parallelization of the Exponential Time Differencing Method for
  Solving Diffuse-Interface Models}.
\newblock {\em Journal of Scientific Computing}, 104(3):1--26, 2025.

\bibitem{clancy2013use}
C.~Clancy and J.~A. Pudykiewicz.
\newblock {On the use of exponential time integration methods in atmospheric
  models}.
\newblock {\em Tellus A: Dynamic Meteorology and Oceanography}, 65(1):20898,
  2013.

\bibitem{cox2002exponential}
S.~M. Cox and P.~C. Matthews.
\newblock {Exponential time differencing for stiff systems}.
\newblock {\em Journal of Computational Physics}, 176(2):430--455, 2002.

\bibitem{dai2023exponential}
H.~Dai, C.~Wang, and Q.~Huang.
\newblock {Exponential time differencing-Pad'e finite element method for
  nonlinear convection-diffusion-reaction equations with time constant delay}.
\newblock {\em Journal of computational mathematics}, 41(3), 2023.

\bibitem{debrabant2021runge}
K.~Debrabant, A.~Kv{\ae}rn{\o}, and N.~C. Mattsson.
\newblock {Runge--Kutta Lawson schemes for stochastic differential equations}.
\newblock {\em BIT Numerical Mathematics}, 61(2):381--409, 2021.

\bibitem{erdougan2019new}
U.~Erdo{\u{g}}an and G.~J. Lord.
\newblock {A new class of exponential integrators for SDEs with multiplicative
  noise}.
\newblock {\em IMA Journal of Numerical Analysis}, 39(2):820--846, 2019.

\bibitem{fu2024higher}
Z.~Fu, J.~Shen, and J.~Yang.
\newblock {Higher-order energy-decreasing exponential time differencing
  Runge-Kutta methods for gradient flows}.
\newblock {\em Science China Mathematics}, pages 1--20, 2024.

\bibitem{gaines1995basis}
J.~Gaines.
\newblock {A basis for iterated stochastic integrals}.
\newblock {\em Mathematics and Computers in Simulation}, 38(1-3):7--11, 1995.

\bibitem{geiger2012exponential}
S.~Geiger, G.~Lord, and A.~Tambue.
\newblock {Exponential time integrators for stochastic partial differential
  equations in 3D reservoir simulation}.
\newblock {\em Computational Geosciences}, 16:323--334, 2012.

\bibitem{higham2002accuracy}
N.~J. Higham.
\newblock {\em {Accuracy and stability of numerical algorithms}}.
\newblock SIAM, 2002.

\bibitem{hochbruck2010exponential}
M.~Hochbruck and A.~Ostermann.
\newblock {Exponential integrators}.
\newblock {\em Acta Numerica}, 19:209--286, 2010.

\bibitem{holm2021stochastic}
D.~D. Holm and R.~Hu.
\newblock Stochastic effects of waves on currents in the ocean mixed layer.
\newblock {\em Journal of Mathematical Physics}, 62(7), 2021.

\bibitem{iserles2009first}
A.~Iserles.
\newblock {\em {A first course in the numerical analysis of differential
  equations}}.
\newblock Cambridge university press, 2009.

\bibitem{isherwood2018strong}
L.~Isherwood, Z.~J. Grant, and S.~Gottlieb.
\newblock {Strong stability preserving integrating factor Runge--Kutta
  methods}.
\newblock {\em SIAM Journal on Numerical Analysis}, 56(6):3276--3307, 2018.

\bibitem{jardak2010comparison}
M.~Jardak, I.~Navon, and M.~Zupanski.
\newblock {Comparison of sequential data assimilation methods for the
  Kuramoto--Sivashinsky equation}.
\newblock {\em International journal for numerical methods in fluids},
  62(4):374--402, 2010.

\bibitem{jentzen2009overcoming}
A.~Jentzen and P.~E. Kloeden.
\newblock {Overcoming the order barrier in the numerical approximation of
  stochastic partial differential equations with additive space--time noise}.
\newblock {\em Proceedings of the Royal Society A: Mathematical, Physical and
  Engineering Sciences}, 465(2102):649--667, 2009.

\bibitem{kassam2005fourth}
A.-K. Kassam and L.~N. Trefethen.
\newblock {Fourth-order time-stepping for stiff PDEs}.
\newblock {\em SIAM Journal on Scientific Computing}, 26(4):1214--1233, 2005.

\bibitem{kloeden2011numerical}
P.~Kloeden and E.~Platen.
\newblock {\em {Numerical Solution of Stochastic Differential Equations}}.
\newblock Stochastic Modelling and Applied Probability. Springer Berlin
  Heidelberg, 1991.

\bibitem{komori2014stochastic}
Y.~Komori and K.~Burrage.
\newblock {A stochastic exponential Euler scheme for simulation of stiff
  biochemical reaction systems}.
\newblock {\em BIT Numerical Mathematics}, 54:1067--1085, 2014.

\bibitem{komori2017weak}
Y.~Komori, D.~Cohen, and K.~Burrage.
\newblock {Weak second order explicit exponential Runge--Kutta methods for
  stochastic differential equations}.
\newblock {\em SIAM Journal on Scientific Computing}, 39(6):A2857--A2878, 2017.

\bibitem{krogstad2005generalized}
S.~Krogstad.
\newblock {Generalized integrating factor methods for stiff PDEs}.
\newblock {\em Journal of Computational Physics}, 203(1):72--88, 2005.

\bibitem{kunita2006representation}
H.~Kunita.
\newblock {On the representation of solutions of stochastic differential
  equations}.
\newblock In {\em S{\'e}minaire de Probabilit{\'e}s XIV 1978/79}, pages
  282--304. Springer, 1978.

\bibitem{lawson1967generalized}
J.~D. Lawson.
\newblock {Generalized Runge-Kutta processes for stable systems with large
  Lipschitz constants}.
\newblock {\em SIAM Journal on Numerical Analysis}, 4(3):372--380, 1967.

\bibitem{liu1997discretization}
X.~Liu and C.~Li.
\newblock {Discretization of stochastic differential equations by the product
  expansion for the Chen series}.
\newblock {\em Stochastics: An International Journal of Probability and
  Stochastic Processes}, 60(1-2):23--40, 1997.

\bibitem{lord2014introduction}
G.~J. Lord, C.~E. Powell, and T.~Shardlow.
\newblock {\em {An introduction to computational stochastic PDEs}}, volume~50.
\newblock Cambridge University Press, 2014.

\bibitem{lord2013stochastic}
G.~J. Lord and A.~Tambue.
\newblock {Stochastic exponential integrators for the finite element
  discretization of SPDEs for multiplicative and additive noise}.
\newblock {\em IMA Journal of Numerical Analysis}, 33(2):515--543, 2013.

\bibitem{milstein2004stochastic}
G.~N. Milstein and M.~V. Tretyakov.
\newblock {\em {Stochastic numerics for mathematical physics}}, volume~39.
\newblock Springer, 2004.

\bibitem{nie2006efficient}
Q.~Nie, Y.-T. Zhang, and R.~Zhao.
\newblock {Efficient semi-implicit schemes for stiff systems}.
\newblock {\em Journal of Computational Physics}, 214(2):521--537, 2006.

\bibitem{niegemann2007higher}
J.~Niegemann, L.~Tkeshelashvili, and K.~Busch.
\newblock {Higher-order time-domain simulations of Maxwell's equations using
  Krylov-subspace methods}.
\newblock {\em Journal of Computational and Theoretical Nanoscience},
  4(3):627--634, 2007.

\bibitem{qin2025energy}
M.~Qin, Q.~Zhai, and R.~Zhang.
\newblock Energy stability and maximum principle of skeletal finite element
  method for allen--cahn equation with exponential time differencing schemes.
\newblock {\em Journal of Scientific Computing}, 105(1):1--29, 2025.

\bibitem{ree1958lie}
R.~Ree.
\newblock {Lie elements and an algebra associated with shuffles}.
\newblock {\em Annals of Mathematics}, 68(2):210--220, 1958.

\bibitem{reutenauer2003free}
C.~Reutenauer.
\newblock Free lie algebras.
\newblock In {\em Handbook of algebra}, volume~3, pages 887--903. Elsevier,
  2003.

\bibitem{ruemelin1982numerical}
W.~R{\"u}emelin.
\newblock {Numerical treatment of stochastic differential equations}.
\newblock {\em SIAM Journal on Numerical Analysis}, 19(3):604--613, 1982.

\bibitem{sandu2015generalized}
A.~Sandu and M.~Günther.
\newblock A generalized-structure approach to additive runge--kutta methods.
\newblock {\em SIAM Journal on Numerical Analysis}, 53(1):17--42, 2015.

\bibitem{serre2009lie}
J.-P. Serre.
\newblock {\em {Lie algebras and Lie groups: 1964 lectures given at Harvard
  University}}.
\newblock Springer, 2009.

\bibitem{strehmel1987b}
K.~Strehmel and R.~Weiner.
\newblock {B-convergence results for linearly implicit one step methods}.
\newblock {\em BIT Numerical Mathematics}, 27(2):264--281, 1987.

\bibitem{strichartz1987campbell}
R.~S. Strichartz.
\newblock {The Campbell-Baker-Hausdorff-Dynkin formula and solutions of
  differential equations}.
\newblock {\em Journal of Functional Analysis}, 72(2):320--345, 1987.

\bibitem{sun2023family}
J.~Sun, H.~Zhang, X.~Qian, and S.~Song.
\newblock {A family of structure-preserving exponential time differencing
  Runge--Kutta schemes for the viscous Cahn--Hilliard equation}.
\newblock {\em Journal of Computational Physics}, 492:112414, 2023.

\bibitem{ta2015integration}
C.~Ta, D.~Wang, and Q.~Nie.
\newblock {An integration factor method for stochastic and stiff
  reaction--diffusion systems}.
\newblock {\em Journal of computational physics}, 295:505--522, 2015.

\bibitem{von2024exponential}
C.~von Hallern, R.~Missfeldt, and A.~R{\"o}ssler.
\newblock {An exponential stochastic Runge--Kutta type method of order up to
  1.5 for SPDEs of Nemytskii-type}.
\newblock {\em IMA Journal of Numerical Analysis}, page drae064, 2024.

\bibitem{wang2025novel}
H.~Wang, J.~Sun, H.~Zhang, and X.~Qian.
\newblock {A novel up to fourth-order equilibria-preserving and energy-stable
  exponential Runge-Kutta framework for gradient flows}.
\newblock {\em CSIAM Trans. Appl. Math.}, 6(1):106--147, 2025.

\bibitem{woodfield2024strong}
J.~Woodfield.
\newblock {Strong stability preservation for stochastic partial differential
  equations}.
\newblock {\em arXiv preprint arXiv:2411.11172}, 2024.

\bibitem{yang2022new}
G.~Yang, K.~Burrage, Y.~Komori, and X.~Ding.
\newblock {A new class of structure-preserving stochastic exponential
  Runge-Kutta integrators for stochastic differential equations}.
\newblock {\em BIT Numerical Mathematics}, 62(4):1591--1623, 2022.

\bibitem{yousuf2013efficient}
M.~Yousuf and A.~Khaliq.
\newblock {An efficient ETD method for pricing American options under
  stochastic volatility with nonsmooth payoffs}.
\newblock {\em Numerical Methods for Partial Differential Equations},
  29(6):1864--1880, 2013.

\bibitem{zhangcharacterizing}
H.~Zhang and W.~Haifeng.
\newblock {Characterizing the stabilization size of a third-order one-parameter
  ETDRK scheme for the Swift--Hohenberg equation}.
\newblock 2024.

\end{thebibliography}
\bibliographystyle{abbrv}
\appendix

\subsection{Symmetric part of nested Stratonovich integration }\label{sec:sym_J}

Assume the following inductive hypothesis
\begin{align}
\operatorname{Sym}\left(J_{\mathrm{\b j}}\right) = \frac{1}{n!} \sum_{\sigma \in S_n} J_{j_{\sigma(1)}, \ldots, j_{\sigma(n)}} = \frac{1}{n!}J_{j_1}J_{j_2}...J_{j_n}, \quad n =  1,2,...N.\label{eq:inductive hypothesis}
\end{align}

We shall recall the shuffle product formula for nested integration:
\begin{align}
J_{j_{1},...j_{m}}J_{k_{1}...k_{n}} = \sum_{\sigma \in \text{Sh}(m,n)}J_{\sigma(j_{1},...,j_{m};k_{1},...,k_{n})} 
\end{align}
where $\text{Sh}(m,n)$ denotes the set of shuffles length $m+n$, preserving the order of $j_{1},...j_{m}$ and $k_{1},...,k_{n}$. A subcase of this formula gives the following identity:
\begin{align}
J_{j_{1},...j_{n}}J_{j_{n+1}} = \sum_{\sigma \in \text{Sh}(n,1)}J_{\sigma(j_{1},...,j_{n};j_{n+1})} = \sum_{l=0}^{n} J_{j_{1},...,j_{l},j_{n+1},j_{l+1},...,j_{n}}.\label{eq:key identity}
\end{align}
$n=1$ is true trivially, and
\cref{eq:key identity} can be used with $n=2$ to give $J_{j_1}J_{j_2} = J_{j_1,j_2}+J_{j_2,j_1}$ then in the case $n=3$ one can use \cref{eq:key identity} recursively,
\begin{align}
J_{j_1}J_{j_2}J_{j_3} =J_{j_1,j_2}J_{j_3}+J_{j_2,j_1}J_{j_3}=\sum_{\sigma\in S_3}J_{j_{\sigma(1)},j_{\sigma(2)},j_{\sigma(3)}}.
\end{align}

Assume \cref{eq:inductive hypothesis} up to finite $N$ as an inductive hypothesis, using \cref{eq:key identity} we have
\begin{align}
J_{j_1}....J_{j_{N}}J_{j_{N+1}} &=  \left(\sum_{\sigma \in S_N} J_{j_{\sigma(1)}, \ldots, j_{\sigma(N)}} \right)J_{j_{N+1}}=\sum_{\sigma \in S_N} \left( J_{j_{N+1},j_{\sigma(1)}, \ldots, j_{\sigma(N)}} +...+ J_{j_{\sigma(1)}, \ldots, j_{\sigma(N)},j_{N+1}} \right)\\
&=  \sum_{\sigma\in S_{n+1}}J_{j_{\sigma(1)}, \ldots, j_{\sigma(N+1)}} 
\end{align}
upon dividing by $(N+1)!$, the inductive hypothesis is proven for $n+1$.

\subsection{Derivation of stochastic travelling wave solutions to the KdV equation}\label{sec: stochastic travelling wave solutions to the KdV equation}

By Itô's formula and the Chain rule, one can compute 
\begin{align}
d_t u = d_t u(t,X(x,t)) = \partial_t u + \partial_{X}u d_t X = \partial_t u -a u_{x}dW.
\end{align}
This allows \cref{eq:KdV constant noise} to be transformed into the deterministic KdV equation
\begin{align}
\partial_t u + uu_x+u_{xxx} = 0.\label{deterministic:kvd}
\end{align} 
Seeking a travelling wave solution of the form 
\begin{align}
u = f(\xi) \quad \text{where}\quad \xi = x - \beta t.
\end{align}
We can deduce that the stochastic travelling wave solution of \cref{deterministic:kvd} satisfies the ODE
\begin{align}
-\beta f'(\xi) + f(\xi)f'(\xi) + f^{'''}(\xi) = 0.\label{eq:ode_travellingwave}
\end{align}
Assuming vanishing contribution of $u,u_x,u_{xx}$ at positive and negative infinity, we can derive the travelling wave solutions of \cref{eq:ode_travellingwave} by
integrating w.r.t. $\xi$ and then multiplying by $f'(\xi)$ and then integrating again w.r.t. $\xi$. using vanishing contributions at the endpoints so constants of integration disappear. Then using the separation of variables one attains
\begin{align}
\int_0^{F} \frac{1}{f\sqrt{\beta-\frac{f}{3}}} df = \int_0^{\xi} d\xi
\end{align}
upon letting $f = 3\beta \operatorname{sech}^2(g)$, $df = -6\beta\tanh(g) \operatorname{sech}^2(g)dg$ one can derive the travelling wave solution \cref{eq:stochastic travelling wave} to the stochastic KdV equation \cref{eq:KdV constant noise}.


\subsection{Commutator relationships}\label{eq:commutator expansion}

This section is aimed at demonstrating that non-symmetric parts of a Stratonovich-Taylor expansion can be expressed in terms of nested commutator relationships of $g_i$ explicitly. Firstly for $P=2$, 
\begin{align}
\sum_{0 \leq j_1, j_2 \leq m} L^{j_1} g^{k}_{j_2} \operatorname{Alt}(J_{j_1, j_2}) = \frac{1}{2} \sum_{0 \leq j_1, j_2 \leq m}\left(L^{j_1} g^{k, j_2}-L^{j_2} g^{k, j_1}\right) J_{j_1, j_2} = \frac{1}{2} \sum_{0 \leq j_1, j_2 \leq m}\left([g_{j_1} ,g_{j_2}]\right)^k J_{j_1, j_2}.
\end{align}

For $P=3$, consider the fully antisymetric term and use the alternating tensor identity and linearity of scalar multiplication gives
\begin{align}
\sum_{j_1,j_2,j_3} L^{j_1}L^{j_{2}} g^k_{j_3}\operatorname{Alt}(J_{j_1,j_{2},j_3}) &= \frac{1}{3!}\sum_{j_1,j_2,j_3} L^{j_1}L^{j_{2}} g^k_{j_3}\sum_{\sigma \in S_3}\operatorname{sign}(\sigma)J_{j_{\sigma(1)},j_{\sigma(2)},j_{\sigma(3)}}\\
&= \frac{1}{3!}\sum_{j_1,j_2,j_3}\sum_{\sigma \in S_3}\operatorname{sign}(\sigma)  L^{j_1}L^{j_{2}} g^k_{j_3} J_{j_{\sigma(1)},j_{\sigma(2)},j_{\sigma(3)}}\\
&= \frac{1}{3!}\sum_{\sigma \in S_3}\operatorname{sign}(\sigma)\sum_{j_1,j_2,j_3}  L^{j_1}L^{j_{2}} g^k_{j_3} J_{j_{\sigma(1)},j_{\sigma(2)},j_{\sigma(3)}}\\
&= \frac{1}{3!}\sum_{\sigma \in S_3}\operatorname{sign}(\sigma^{-1})\sum_{i_{1},i_{2},i_{3}}  L^{i_{\sigma^{-1}(1)}} L^{i_{\sigma^{-1}(2)}} g^k_{i_{\sigma^{-1}(3)}} J_{i_{1},i_{2},i_{3}} \label{eq:permreindex}\\
&= \sum_{i_{1},i_{2},i_{3}} \operatorname{Alt}(L^{i_{1}}L^{i_{2}}g_{i_{3}}^k) J_{i_{1} i_{2} i_{3}}
\end{align}
Where in \cref{eq:permreindex}, we fix the permutation $\sigma$ and reindex $(i_1,i_2,i_3) = (j_{\sigma(1)},j_{\sigma(2)},j_{\sigma(3)})$, such that $i_{\sigma^{-1}(1)},i_{\sigma^{-1}(2)},i_{\sigma^{-1}(3)} = j_1,j_2,j_3$, and noting $\operatorname{sign}(\sigma) = \operatorname{sign}(\sigma^{-1})$. Finally by noting that the $3!$ permutations of $S_3$ can be paired sharing the last digit $((1,2,3),(2,1,3)), ((1,3,2),(3,1,2)), ((2,3,1),(3,2,1))$ with opposite signed parity to form a sum of $3!/2 = 3$ nested commutator relationships to give

\begin{align} 
\sum_{j_1,j_2,j_3} L^{j_1}L^{j_{2}} g^k_{j_3}\operatorname{Alt}(J_{j_1,j_{2},j_3}) &= \frac{1}{3!}\sum_{j_1,j_2,j_3} \left( [g_{j_{1}},[g_{j_{2}},g_{j_{3}}]] \right)^k J_{j_{1},j_{2},j_{3}}.
\end{align}

A rank 3 tensor does not solely decompose into symetric and antisymetric parts, we rely on the particular deconstruction below:
\begin{align}
J_{i,j,k} = \operatorname{Sym}(J_{i,j,k}) + \operatorname{Alt}(J_{i,j,k}) + N^1_{i,j,k} + N^2_{i,j,k}
\end{align}
Where the terms in the decomposition are
\begin{align}
\operatorname{Sym}(J_{i,j,k}) &= \frac{1}{3!} \left( J_{i,j,k}+ J_{j,k,i}+ J_{k,j,i} + J_{i,k,j} + J_{j,i,k} + J_{k,i,j} \right) \\
\operatorname{Alt}(J_{i,j,k})&= \frac{1}{3!} \left( J_{i,j,k}+ J_{j,k,i}+ J_{k,j,i} - J_{i,k,j} - J_{j,i,k} - J_{k,i,j} \right)\\
N^1_{i,j,k} &= \frac{1}{3}\left(J_{i,j,k} - J_{i,k,j} + J_{j,i,k} - J_{k,i,j}\right)\\
N^2_{i,j,k} &=  \frac{1}{3}\left(J_{i,j,k}-J_{j,i,k}+J_{i,k,j}-J_{j,k,i} \right) 
\end{align}
The decomposition of a rank 3 tensor is not unique, the above representation was chosen for the following antisymmetric properties on the decomposition of the remainder 
\begin{align}
    N^1_{i,j,k}= - N^1_{i,k,j},\quad N^2_{i,j,k}= - N^1_{j,i,k}.
\end{align}
These properties allow the following rearrangement in terms of commutators:
\begin{align}
\sum_{j_1,j_2,j_3} L^{j_1}L^{j_{2}} g^k_{j_3}N^1_{j_{1},j_{2},j_{3}} &= 
\frac{1}{2} \sum_{j_1,j_2,j_3} L^{j_1}\left[g_{j_{2}} , g_{j_3}\right] J_{j_{1},j_{2},j_{3}}
\end{align}
\begin{align}
\sum_{j_1,j_2,j_3} L^{j_1}L^{j_{2}} g^k_{j_3}N^2_{j_{1},j_{2},j_{3}} &=    \frac{1}{2} \sum_{j_1,j_2,j_3}[L^{j_1},L^{j_{2}}]g^k_{j_3}J_{j_{1},j_{2},j_{3}} 
\end{align}

Therefore at order $3$ we have 
\begin{align}
\sum_{j_1,j_2,j_3\in\lbrace 0,...,m \rbrace^3 }L^{j_1}L^{j_2}g^k_{j_{3}} J_{j_{1},j_{2},j_{3}} =   \frac{1}{3!}\sum_{j_1,j_2,j_3} \left( [g_{j_{1}},[g_{j_{2}},g_{j_{3}}]] \right)^k J_{j_{1},j_{2},j_{3}} +
\frac{1}{2} \sum_{j_1,j_2,j_3} L^{j_1}\left[g_{j_{2}} , g_{j_3}\right]^k J_{j_{1},j_{2},j_{3}}\\+  \frac{1}{2} \sum_{j_1,j_2,j_3}[L^{j_1},L^{j_{2}}]g^k_{j_3}J_{j_{1},j_{2},j_{3}} +
\sum_{j_1,j_2,j_3\in\lbrace 0,...,m \rbrace^3 }L^{j_1}L^{j_2}g_{j_{3}}^k \operatorname{Sym}(J_{j_{1},j_{2},j_{3}}).
\end{align}

\subsection{SETDRK methods for Itô systems, introduction and literature}\label{sec:SETDRK methods for Ito systems}

We review Itô-SETDRK schemes, in the simplified setting of one-dimensional Itô-SDEs, 
\begin{align}
d_t u  = \left[ N(u,t) + L u \right] dt +  \left[ g(u,t)\right] dW_t.\label{eq:ito-sde}
\end{align}

Where $u$ is a one dimensional stochastic process, $N(u,t)$ is a nonlinear function, $L$ is a linear function, and $g(u,t)$ is a nonlinear function, $dW_t$ is a one dimensional Brownian motion. By multiplying \cref{eq:ito-sde} through by the integrating factor $e^{-Lt}$, and integrating between $t^n$ and $t^{n+1}$ one may write \cref{eq:ito-sde} in integral form as follows 
\begin{align}
 u^{n+1}  = e^{L \Delta t} u^{n}  + e^{L \Delta t}\int_{0}^{\Delta t} e^{-L s}  N(u(t_n + s),t_n + s) ds +  e^{L (\Delta t+t^{n})}\int_{t^{n}}^{t^{n+1}} e^{-L s}  g(u(s), s)  dW_s.\label{eq:integral-ito}
\end{align}
where $\Delta t = t^{n+1}-t^{n}$. Stochastic exponential time differencing methodology arises when one considers approximating the above integral representation of the SDE. 
\begin{method}[SIFEM- Stochastic integrating factor Euler Maruyama]\label{method:SIFEM}
Approximating both the integrands in \cref{eq:integral-ito} by evaluating the integrands at the leftmost temporal point $t_n$, leads to the following scheme 
\begin{align}
u^{n+1}  =  e^{L \Delta t} (u^{n}  +  \Delta t N(u_n,t_n) +  g(u_n,t_n) \Delta W ). 
\end{align}
\end{method}
 this converges to the Itô SDE and treats the linear operator with an exponential operator. This scheme is refereed to as SETDM0 in \cite{lord2013stochastic}.
Whilst this scheme does arise from approximating the integrals in \cref{eq:integral-ito} one can also naturally derive this scheme as a stochastic integrating factor Euler Maruyama scheme  SIFEM.

\begin{method}[SETDM$10$- Drift Exponential Time Differencing]\label{method:DETD}
Another approach following \cite{cox2002exponential}, approximates $N$ in \cref{eq:integral-ito} by a constant $N_n = N(u(t_{n}),t_n)$ and integrates $e^{-Ls}ds$ exactly to give the following scheme
\begin{align}
 u^{n+1}  =  e^{L \Delta t} u^{n}  + N_n( e^{L\Delta t} - 1)L^{-1} + e^{L\Delta t} g_n \Delta W.
\end{align}  
\end{method}
This can be found under the name SETDM1 scheme in \cite{lord2013stochastic}. 
\begin{method}[SETDM01-Drift-Diffusion Stochastic exponential time differencing]\label{method: Drift-Diffusion Stochastic exponential time differencing}
In an analogous manner to the deterministic setting, if 
one instead approximates $g = g_n$ by a constant, one cannot directly compute the stochastic integral but instead it's distribution can be attained through Itô's Isometry
$$\int_{t^n}^{t^{n+1}} e^{-L s} d W \sim \mathcal{N}\left(0, \int_{t^n}^{t^{n+1}}\left(e^{-L s}\right)^2 d s\right).$$

Sampling from this distribution leads to the following first order Stochastic Exponential time differencing scheme:
\begin{align}
 u^{n+1}  = e^{L \Delta t} u^{n}  + N_n (e^{L\Delta t } - 1) L^{-1} +  g_n \sqrt{\frac{1}{2 }\left(e^{2 L \Delta t} -1 \right)L^{-1}} \Delta Z, \quad \Delta Z \sim \mathcal{N}(0,1)\label{eq:SETDM01}
\end{align}
\end{method}
Such schemes can be found for example in \cite{jardak2010comparison,jentzen2009overcoming}.


Consider the Taylor expansion of the exponential operators in \cref{method:SIFEM}, \cref{method:DETD}, \cref{method: Drift-Diffusion Stochastic exponential time differencing}, these are respectively
\begin{align}
    e^{L\Delta t} &= (1+ L\Delta t + (L\Delta t)^2 + ...)  \\
(e^{L\Delta t} -1)L^{-1} &= \Delta t +  L\Delta t^2 - ... \\
\sqrt{(2 L)^{-1}\left(e^{2 L \Delta t} -1 \right)} &= \sqrt{\Delta t + L\Delta t^2+...} 
\end{align}
and allows all schemes to be convergent to the Itô system and at leading order, equal to the Euler-Maruyama scheme. Therefore, with this as motivation, if one finds another perhaps more convenient exponential operator, agreeing at leading order to the desired SDE, one can derive a new method. 
Consider modifying \cref{method: Drift-Diffusion Stochastic exponential time differencing}, with the following transformation
\begin{align}
    \sqrt{\frac{1}{2 L }\left(e^{2 L (\Delta t)} -1 \right)}  \mapsto \frac{e^{L\Delta t}-1}{L\sqrt{\Delta t}},
\end{align}
these expressions agree at leading order and this results in the following \cref{method:CSETDRK1}.
\begin{method}[Convenient stochastic exponential time differencing Itô]\label{method:CSETDRK1}
\begin{align}
 u^{n+1}  =  e^{L \Delta t} u^{n}  + N_n\frac{e^{L\Delta t}-1}{L} + g_n \frac{ e^{L\Delta t}-1}{L\Delta t} \Delta W,
\end{align} 
\end{method}
The motivation is that this can be rearranged into
\begin{align}
 u^{n+1}  =  e^{L \Delta t} u^{n}  + \frac{e^{L\Delta t}-1}{L}\left( N_n + g_n\frac{\Delta W}{\Delta t} \right), 
\end{align} 
requires little change to any deterministic ETDRK1 codes, whilst converges to the Itô SPDE with strong order $1/2$, whilst benefiting from the exponential treatment of the linear operator. This scheme is found in \cite{komori2014stochastic} as the stochastic exponential Euler (SEE) scheme, derived under different considerations. This observation also leads to the ability to generate SETDRK methods such as those in this paper.

\subsection{Assumption}\label{sec:hidden assumption}
\Cref{eq:assumption} makes an assumption on the deterministic order $p$ one-step map, namely that the map $\Delta t f \mapsto \Delta t f + g_m \Delta W^m$ gives a stochastic term of order $\mathcal{O}(\Delta t^{\frac{p+1}{2}})$, this is true for methods associated with B-series expansions i.e. RK, IFRK, ETDRK methods. 

For example, a deterministic RK scheme has a leading order error at $\mathcal{O}(\Delta t^{p+1})$ which can be written as a linear combination of $p+1$ elementary differentials of $f$. This allows the error to be written as a linear combination of $p+1$ elementary differentials of $\Delta tf$ such that the subsequent transformation $f\Delta t \mapsto f\Delta t  +g_m\Delta W^m$, gives leading order truncation error $\mathcal{O}(\Delta t^{\frac{p+1}{2}})$. For instance the 2nd order Heun-method, takes the local truncation error
\begin{align}
err = \frac{\Delta t^3}{12}\left(f^{\prime \prime}(y)[f(y), f(y)]-f^{\prime}(y)^2 f(y)\right).
\end{align}
Which under $f\Delta t \mapsto f\Delta t  +g_p\Delta W^p$ reveals an $\mathcal{O}(\Delta t^{3/2})$ error term. 

\end{document}